\documentclass{amsart}
\usepackage[margin=1.2in]{geometry}
\usepackage[T1]{fontenc}
\usepackage[utf8]{inputenc}
\usepackage{lmodern}
\usepackage{amsmath,amssymb,amsfonts,amsthm,amscd,mathtools}
\usepackage{latexsym,mathrsfs,stmaryrd}
\usepackage[dvipsnames]{xcolor}
\usepackage[all]{xy}
\usepackage{verbatim}
\usepackage{tikz-cd}
\usepackage{booktabs,array}
\usepackage{hyperref}
\hypersetup{
  pdftitle={Twisting of affine algebraic groups, III},
  pdfauthor={Shlomo Gelaki}
}

\hypersetup{
    colorlinks=true,
    linkcolor=Blue,
    urlcolor=Blue,
    citecolor=Blue,
    filecolor=Blue,
    pdfborder={0 0 0}
}

\newtheorem{theorem}{Theorem}[section]
\newtheorem{lemma}[theorem]{Lemma}
\newtheorem{proposition}[theorem]{Proposition}
\newtheorem{corollary}[theorem]{Corollary}
\theoremstyle{definition}
\newtheorem{definition}[theorem]{Definition}

\newtheorem{example}[theorem]{Example}
\newtheorem{question}[theorem]{Question}
\newtheorem{conjecture}[theorem]{Conjecture}
\newtheorem{remark}[theorem]{Remark}

\numberwithin{equation}{section}

\newcommand{\Rep}{\operatorname{Rep}}

\newcommand{\Hom}{\operatorname{Hom}}
\newcommand{\End}{\operatorname{End}}

\newcommand{\Rad}{\operatorname{Rad}}

\newcommand{\Os}{\mathscr{O}}

\newcommand{\id}{\mathrm{id}}

\newcommand{\D}{\mathcal D}

\newcommand{\C}{\mathbb C}
\newcommand{\Ga}{\mathbb G_a}
\newcommand{\Gm}{\mathbb G_m}

\newcommand{\eps}{\varepsilon}

\newcommand{\Lie}{\operatorname{Lie}}
\newcommand{\Supp}{\operatorname{Supp}}
\newcommand{\Aut}{\operatorname{Aut}}
\newcommand{\GKdim}{\operatorname{GKdim}}
\newcommand{\CYB}{\operatorname{CYB}}
\newcommand{\Jac}{\operatorname{Jac}}

\newcommand{\ad}{\operatorname{ad}}
\newcommand{\im}{\operatorname{im}}
\newcommand{\res}{\operatorname{res}}
\newcommand{\Jset}{\mathcal J}

\newcommand{\Jcs}{\mathcal J_{\mathrm{cs}}}

\title[Twisting of affine algebraic groups III]{Twisting of affine algebraic groups, III}
\author{Shlomo Gelaki}
\address{Department of Mathematics, Iowa State University, Ames, IA 50011, USA}
\email{gelaki@iastate.edu}
\date{\today}

\keywords{affine algebraic group; connected solvable group; Hopf $2$-cocycle; twisted function algebra; Jordanian twist}

\begin{document}

\begin{abstract}
Let $G$ be an affine algebraic group over $\C$ and let $J$ be a Hopf
$2$-cocycle for $\Os(G)$. We prove that one-sided cocycle twisting
preserves Gelfand--Kirillov dimension and that $\Os(G)_J$ is simple
if and only if $J$ is minimal. If $H$ is the support of $J$, we identify
the primitive and maximal spectra of $\Os(G)_J$ with the closed points of $G/H$ and
show that all primitive quotients are isomorphic to the minimal support
algebra $\Os(H)_J$. In particular, the prime and Jacobson radicals vanish, and
finite-dimensional irreducible modules exist exactly when $H$ is finite.
For connected solvable $G$, we construct an iterated differential Ore
presentation over an algebraic quantum torus and deduce that $\Os(G)_J$
is an affine Noetherian domain. We describe the torus-trivial Hopf $2$-cocycles on $\Os(G)$
using nilpotent solutions of the classical Yang--Baxter equation and
identify their supports with the algebraic hulls of their carriers.
We also separate the centrally supported toral sector and formulate a
relative Hopf $2$-cocycle problem for the remaining classes. Finally, we compute
multi-extended Jordanian examples, study a McConnell-type family of
quantum differential algebras, and establish a conditional birational
deletion criterion and a normal form when the unipotent radical has
dimension at most one.
\end{abstract}

\maketitle

\enlargethispage{2\baselineskip}
\tableofcontents

\section{Introduction}\label{sec:introduction}

Let $G$ be an {\bf affine algebraic group} over $\C$, and let $J$ be a Hopf
$2$-cocycle for its coordinate Hopf algebra $\Os(G)$.  Let ${}_J\mathscr O(G)_J$ be the
two-sided twisted Hopf algebra, and let $\Os(G)_J$ be the
one-sided twisted function algebra.  The latter algebra was
introduced and studied systematically in \cite{G}.  In particular, the
support of $J$ and the center of $\Os(G)_J$ were determined
\cite[Theorems~3.1 and~3.5]{G}, and for {\bf connected nilpotent} $G$,
$\Os(G)_J$ was shown to be a Noetherian domain and the simple algebras of the form $\Os(G)_J$
were described. In particular, {\bf minimal} Hopf
$2$-cocycles  for {\bf unipotent} affine algebraic groups $G$ give
Weyl algebras $\mathcal A_n(\mathbb C)$ \cite[Theorem~4.7]{G}, while the connected nilpotent case
is expressed by crossed products of quantum-torus, Weyl, and polynomial
parts \cite[Theorem~4.12]{G}.  The paper \cite{G} concluded with several
questions about {\bf arbitrary} affine algebraic groups $G$, including whether
minimality of $J$ is equivalent to simplicity of $\Os(G)_J$, which simple algebras can
occur as $\Os(G)_J$, whether $\operatorname{Prim}(\mathscr O(G)_J)
 =\operatorname{Max}(\mathscr O(G)_J)$  
and this set is a single $G$-orbit, and whether $\Os(G)_J$ is semiprime and semiprimitive \cite[Question 6.1]{G}.

Part II of this series \cite{GII} studies the algebra structure and
representation theory of the two-sided cotriangular Hopf algebras
${}_J\mathscr O(G)_J$ for connected nilpotent affine algebraic
groups. These algebras are affine Noetherian domains of
Gelfand--Kirillov dimension $\dim G$
\cite[Corollary~3.2 and Theorem~7.2]{GII}. In the unipotent case,
their representations are studied through twisted function
algebras on double cosets of the support
\cite[Sections~4--5]{GII} and \cite{BG}.
The distinction between the two-sided and one-sided constructions
is already substantial when $G$ is unipotent and $J$ is minimal:
writing $\mathfrak g:=\operatorname{Lie}(G)$ and $\dim G=2n$, one has
${}_J\mathscr O(G)_J\cong U(\mathfrak g)$ and
$\mathscr O(G)_J\cong \mathcal A_n(\mathbb C)$
as algebras, by \cite[Theorem~3.4]{GII} and
\cite[Theorem~4.7]{G}, respectively.
The present paper returns to the one-sided twisted function
algebras $\mathscr O(G)_J$ and the questions concerning them
raised in Part I.

The purpose of this paper is twofold. We first answer the simplicity
part of \cite[Question~6.1(1)]{G} and \cite[Questions~6.1(4)--(5)]{G} for arbitrary
affine algebraic groups over $\C$. We then extend the structural theory
of one-sided twists from connected nilpotent to connected solvable
groups. In the solvable case we prove Noetherianity and the domain
property, identify the torus-trivial Hopf $2$-cocycle classes, and study explicit
normal forms. The classification of all resulting simple algebras and
the general noncentral-torus Hopf $2$-cocycle problem remain open.

For a {\bf connected solvable} affine algebraic group
$G$, we use the notation
\[
U:=R_u(G),\qquad T:=G/U,\qquad
\mathfrak g:=\Lie(G),\qquad
\mathfrak u:=\Lie(U),\qquad
\Lambda:=X(T).
\]
The passage from nilpotent to solvable groups introduces two new
features.  First, the maximal torus need not be central, so a toral
Hopf
$2$-cocycle cannot in general be separated from the unipotent part by
convolution with an invariant Hopf
$2$-cocycle.  Second, the successive
one-dimensional factors of $U$ are normal in $G$, but need not be central.  The
classification of unipotent fiber functors in
\cite[Theorem~3.2]{EG1}, the computation of invariant Hopf
$2$-cocycles in \cite[Theorem~7.8]{EG2}, and the structure theorem for
homogeneous varieties of split solvable affine algebraic groups in
\cite[Theorem~1]{B} are the principal external inputs.

We first establish a few general results, which do not require solvability.
In Proposition~\ref{prop:growth-general} we show that one-sided Hopf
$2$-cocycle twisting preserves the word-growth filtration determined by a
finite-dimensional generating subcoalgebra. In particular, it
preserves Gelfand-Kirillov dimension.  In Theorem~\ref{thm:minimal-simple}
we prove that for every affine algebraic group $G$, a Hopf $2$-cocycle $J$ for $G$  
is minimal if and only if $\Os(G)_J$ is simple, which answers the simplicity part of \cite[Question 6.1(1)]{G}.  The proof uses the
braided-commutativity and Galois property of $\Os(G)_J$, together with
nondegeneracy of the $R$-form. In Theorem~\ref{thm:q615-main}
we prove that $\operatorname{Prim}(\mathscr O(G)_J)=\operatorname{Max}(\mathscr O(G)_J)$ 
and this set is a single $G$-orbit, which answers \cite[Question~6.1(5)]{G}. We then conclude in 
Corollary \ref{cor:q615-radical} that $\Os(G)_J$ is semiprime and semiprimitive; that is, 
$\Rad(\Os(G)_J)=0= \Jac(\Os(G)_J)$, which answers \cite[Question~6.1(4)]{G}.

We then assume that $G$ is a {\bf connected solvable} affine algebraic group.  Then every fiber functor
on $\Rep(G)$ is {\bf classical} by Proposition~\ref{prop:classical-fiber}.
If $J$ is a Hopf $2$-cocycle for $G$, with $H:=\Supp(J)$, then in Corollary~\ref{cor:center-solvable}, using
\cite[Theorem~3.5]{G} and \cite[Theorem~1]{B}, we give an algebra isomorphism
\[
Z(\Os(G)_J)\cong\Os(G/H)
\cong
\C[x_1,\ldots,x_k,y_1^{\pm1},\ldots,y_{\ell}^{\pm1}].
\]
In the main structural result, Theorem~\ref{thm:solvable-ore-structure},
we construct an iterated differential Ore presentation
\begin{equation}\label{eq:intro-ore-form}
\Os(G)_J
\cong
\C^\eta[\Lambda]
[z_1;\delta_1]\cdots[z_{n};\delta_{n}],
\end{equation}
where $n:=\dim U$, 
and then conclude in Corollary~\ref{cor:ring-properties} that
$\Os(G)_J$ is an affine left and right Noetherian domain of
Gelfand-Kirillov dimension $\dim G$.

We next study the Hopf $2$-cocycles themselves.  Restriction to the canonical
semisimple subcategory $\Rep(T)=\Rep(G/U)\subseteq \Rep(G)$ yields the toral invariant
\eqref{eq:res-ss}.  In Theorem~\ref{thm:solvable-torus-trivial}, obtained
from the classification of unipotent fiber functors
\cite[Theorem~3.2]{EG1}, we identify the kernel of this restriction with
\[
\bigl\{r\in\mathfrak g\wedge\mathfrak u\mid 
\CYB(r)=0\bigr\}/G.
\]
In Proposition~\ref{prop:support-hull} we show that the support of the
quantization $J_f(r)$ is the algebraic hull of the carrier of $r$.
Thus, minimality in this case means algebraic density of the carrier,
and not necessarily equality with $\mathfrak g$; the odd-dimensional example
following Corollary~\ref{cor:quasi-frobenius-hull} illustrates this
phenomenon.  Invariant Hopf $2$-cocycles are described in
\cite[Theorem~7.8]{EG2}.  In Theorems
\ref{thm:centrally-supported-fibers} and~\ref{thm:central-split}
we classify the class in which the toral restriction is supported on the
central diagonalizable subgroup.  In Proposition
\ref{prop:relative-toral-reduction} we reduce an arbitrary toral class to
a torus-trivial cocycle problem for a torally twisted Hopf algebra.

The Ore description \eqref{eq:intro-ore-form} motivates the search for
normal forms for the simple algebras arising from minimal Hopf $2$-cocycles for $G$.
In Example~\ref{ex:basic-solvable-blocks} we include the basic Jordanian
identity
\[
\Os(\Gm\ltimes\Ga)_J\cong\D(\Gm),
\]
while in Theorem~\ref{thm:extended-block} we compute a multi-extended
Jordanian family and give
\[
\Os(G)_J\cong\D(\Gm)\otimes \mathcal A_m(\C).
\]
These examples are based on the Jordanian, extended-Jordanian, chain,
and peripheric constructions of
\cite{KLM,KLO,LD1,LD2}; gauge-equivalent interpolating twists are
studied in \cite{BMP,MMSS}.  Proposition~\ref{prop:external-products}
produces larger examples by external products. In \S\ref{sec:normal-family} we study the family of algebras
$A(\Lambda,\eta;V,\ell,\omega)$, which simultaneously contain
quantum tori, Weyl algebras, rings of differential operators on tori,
and mixed Jordanian examples; its definition and auxiliary properties are given in
\S\ref{sec:prelim-quantum-differential}. In Theorem~\ref{thm:abelian-realization}
we realize every member of this family by an explicit Hopf $2$-cocycle on a
connected commutative affine algebraic group, and characterize simplicity by joint
nondegeneracy.  In Corollary~\ref{cor:solvable-dimension-two} we give a
complete classification in dimension two.

In \S\ref{sec:normal-family} we also develop the birational deletion
problem.  In Proposition~\ref{prop:normal-localization-collapse} we explain
why localization at normal elements of the final simple algebra is
trivial, while in Theorem~\ref{thm:deletion-criterion} we give an inductive
quotient-division-ring criterion.  
In Theorem \ref{thm:center-valued-outer} we describe derivations of the algebras
$A(\Lambda,\eta;V,\ell,\omega)$ modulo inner derivations.
Theorem~\ref{thm:rank-one} establishes the strong normal form for minimal $J$ when
$\dim R_u(G)=1$, and Corollary \ref{cor:rank-one-conjectures} includes the torus case.

The paper is organized as follows.
Section~\ref{sec:preliminaries} fixes the twisting conventions and collects
the auxiliary results on Galois objects, semi-invariants and affine charts,
solvable groups, quantum tori, normal elements and division rings, and
polynomial cohomology.
Section~\ref{sec:general-results} proves the general results on growth,
simplicity, primitive ideals, radicals, and finite-dimensional
irreducible representations.
Section~\ref{sec:solvable-cocycles} develops the solvable Ore structure,
the torus-trivial and centrally supported sectors, and the Jordanian
examples.
Section~\ref{sec:normal-family} studies the candidate family of algebras
$A(\Lambda,\eta;V,\ell,\omega)$, its abelian realization and derivations,
and the conditional deletion problem.

\medskip
\noindent
{\bf Acknowledgments.} This work was supported by Simons Foundation
Award 963288.

\section{Preliminaries}\label{sec:preliminaries}

Throughout the paper, we work over the field $\C$.  Unless otherwise stated, $G$ denotes an affine
algebraic group, and $\Os(G)$ its coordinate Hopf algebra. All algebras are nonzero and unital and all modules are unital.

\subsection{Hopf \texorpdfstring{$2$}{2}-cocycles and twisted function algebras}
\label{sec:prelim-cocycles}

Let $A$ be a Hopf algebra.  A {\bf Hopf $2$-cocycle} for $A$ is a
convolution-invertible linear map
$J:A\otimes A\longrightarrow\C$, such that
\begin{equation}\label{eq:hopf-2-cocycle}
J(a_{(1)}b_{(1)},c)J(a_{(2)},b_{(2)})
=
J(a,b_{(1)}c_{(1)})J(b_{(2)},c_{(2)}),\qquad J(a,1)=\eps(a)=J(1,a)
\end{equation}
for all $a,b,c\in A$.

The {\bf one-sided twisted algebra} $A_J$ is the vector space $A$ with
multiplication
\begin{equation}\label{eq:one-sided-cocycle-product}
a\cdot_J b
:=a_{(1)}b_{(1)}J(a_{(2)},b_{(2)}),\qquad a,b\in A.
\end{equation}
The {\bf two-sided twisted Hopf algebra} $A^J:={}_JA_J$
has the same coalgebra as $A$ and multiplication
\begin{equation}\label{eq:two-sided-cocycle-product}
a\star_J b
:=J^{-1}(a_{(1)},b_{(1)})a_{(2)}b_{(2)}
  J(a_{(3)},b_{(3)}),\qquad a,b\in A.
\end{equation}
When $A=\Os(G)$ is commutative, $A^J$ is cotriangular with
{\bf $R$-form}
\begin{equation}\label{eq:twisted-cotriangular-form}
R^J:=J_{21}^{-1}*J.
\end{equation}
We call $J$ {\bf minimal} if $R^J$ is nondegenerate, equivalently if
$A\rightarrow A^*$, $a\mapsto R^J(a,-)$, is injective.

Recall that {\bf gauge-equivalent} Hopf $2$-cocycles yield isomorphic one-sided twisted
algebras and isomorphic two-sided twisted Hopf algebras.  By
\cite[Theorem~3.1]{G}, every Hopf $2$-cocycle for $\Os(G)$ is gauge
equivalent to (the inflation of) a minimal Hopf $2$-cocycle on a closed subgroup
$H\subseteq G$, unique up to conjugacy.  This subgroup is the
{\bf support} of $J$, and is denoted by $\Supp(J)$.  In particular,
\begin{equation}\label{eq:minimal-support}
J\text{ is minimal if and only if }
\Supp(J)=G.
\end{equation}

The external-product construction will also be used in the examples.

\begin{proposition}\label{prop:external-products}
Let $(G_i,J_i)$ be affine algebraic groups with Hopf $2$-cocycles, $i=1,2$.  For
the external product Hopf $2$-cocycle $J_1\boxtimes J_2$ for $G_1\times G_2$, there is an algebra isomorphism
\[
   \Os(G_1\times G_2)_{J_1\boxtimes J_2}
   \cong
   \Os(G_1)_{J_1}\otimes\Os(G_2)_{J_2}.
\]
If $J_1,J_2$ are minimal, then $J_1\boxtimes J_2$ is minimal.
\end{proposition}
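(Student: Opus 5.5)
We identify $\Os(G_1\times G_2)$ with the tensor product Hopf algebra $\Os(G_1)\otimes\Os(G_2)$. The external product cocycle is
\[
(J_1\boxtimes J_2)(a\otimes b,\,a'\otimes b')=J_1(a,a')J_2(b,b').
\]
First we check that this is a Hopf $2$-cocycle. Convolution invertibility holds with inverse $J_1^{-1}\boxtimes J_2^{-1}$. The cocycle identity \eqref{eq:hopf-2-cocycle} and the normalization both factor as products of the corresponding identities for $J_1$ and $J_2$, because the coproduct and multiplication of the tensor product Hopf algebra act componentwise.

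For the algebra isomorphism we take the identity linear map $\Os(G_1)\otimes\Os(G_2)\to\Os(G_1)_{J_1}\otimes\Os(G_2)_{J_2}$. By \eqref{eq:one-sided-cocycle-product},
\[
(a\otimes b)\cdot_{J_1\boxtimes J_2}(a'\otimes b')
=a_{(1)}a'_{(1)}\otimes b_{(1)}b'_{(1)}\,J_1(a_{(2)},a'_{(2)})J_2(b_{(2)},b'_{(2)})
=(a\cdot_{J_1}a')\otimes(b\cdot_{J_2}b').
\]
This is exactly the multiplication of the tensor product algebra, and the unit $1\otimes1$ is preserved. So the first assertion is a direct check.

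For minimality we first compute the $R$-form. Since convolution in the dual of a tensor product coalgebra is componentwise and the flip respects the factorization,
\[
(J_1\boxtimes J_2)_{21}^{-1}=(J_1)_{21}^{-1}\boxtimes(J_2)_{21}^{-1},
\qquad\text{hence}\qquad
R^{J_1\boxtimes J_2}=R^{J_1}\boxtimes R^{J_2}
\]
by \eqref{eq:twisted-cotriangular-form}. We must show that the map $\Phi\colon A_1\otimes A_2\to(A_1\otimes A_2)^*$, $x\mapsto R(x,-)$, is injective, given that $\phi_i\colon A_i\to A_i^*$ is injective for $i=1,2$. On simple tensors $\Phi(a\otimes b)(a'\otimes b')=\phi_1(a)(a')\,\phi_2(b)(b')$, so $\Phi$ is the composite
\[
A_1\otimes A_2\xrightarrow{\ \phi_1\otimes\phi_2\ }A_1^*\otimes A_2^*\hookrightarrow(A_1\otimes A_2)^*.
\]
The first map is injective because a tensor product of injective linear maps over a field is injective. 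The canonical map $A_1^*\otimes A_2^*\to(A_1\otimes A_2)^*$ is injective for arbitrary vector spaces. Hence $\Phi$ is injective, and $J_1\boxtimes J_2$ is minimal.

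An alternative route to minimality goes through supports and \eqref{eq:minimal-support}, but the direct argument avoids that. The only real subtlety is the injectivity of $A_1^*\otimes A_2^*\to(A_1\otimes A_2)^*$ in infinite dimensions. To justify it, take a nonzero $\sum_i f_i\otimes g_i$ with the $f_i$ linearly independent and the $g_i$ linearly independent. By duality there are $a_j$ with $f_i(a_j)=\delta_{ij}$, and $g_1(b)\neq0$ for some $b$; evaluating at $a_1\otimes b$ gives $g_1(b)\neq0$.
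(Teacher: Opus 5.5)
Your proposal is correct and takes the same route as the paper: the paper's proof simply observes that the twisted product and the cotriangular form both factor tensorwise, so nondegeneracy is preserved. You supply the details the paper leaves implicit, namely the cocycle check, the identity $R^{J_1\boxtimes J_2}=R^{J_1}\boxtimes R^{J_2}$, and the injectivity of $A_1^*\otimes A_2^*\to(A_1\otimes A_2)^*$.
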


\begin{proof}
This follows since both the product formula, and the cotriangular form, factor tensorwise,
so nondegeneracy is preserved by external products.
\end{proof}

\subsection{Jordanian and extended-Jordanian twists}
\label{sec:jordanian-twists}

Assume that $G$ is {\bf connected and solvable}, and let
$\mathfrak g:=\Lie(G)$.  Recall that the right regular action of
$U(\mathfrak g)$ on $\Os(G)$ is given by
\begin{equation}\label{eq:right-regular-action}
(X\triangleright f)(a)
:=
\left.\frac{d}{ds}\right|_{s=0}f(a\exp(sX)),
\qquad
X\in\mathfrak g,
\quad f\in\Os(G),
\quad a\in G.
\end{equation}
Let
\[
\mathcal F=\sum_k u_k\otimes v_k
\]
be a {\bf Drinfeld twist} in a suitable completion of
$U(\mathfrak g)^{\otimes2}$. Thus, using the right-oriented twist convention, $\mathcal F$ is an invertible
normalized element in a suitable completion of $U(\mathfrak g)^{\otimes2}$ satisfying
\[
 (\Delta\otimes\id)(\mathcal F)(\mathcal F\otimes1)
 =(\id\otimes\Delta)(\mathcal F)(1\otimes\mathcal F),
 \qquad
 (\eps\otimes\id)(\mathcal F)
 =(\id\otimes\eps)(\mathcal F)=1.
\]
We require that $\mathcal F$ and $\mathcal F^{-1}$ define mutually inverse
operators on the algebraic tensor product $\Os(G)\otimes\Os(G)$. In the
Jordanian examples the relevant series terminate on each input in the
unipotent directions. Purely toral exponential factors are instead defined
by their scalar action on torus-weight spaces. Define 
\begin{equation}\label{eq:J-from-F}
J_{\mathcal F}(f,g)
:=
(\eps\otimes\eps)
\bigl(\mathcal F\triangleright(f\otimes g)\bigr),
\qquad f,g\in\Os(G).
\end{equation}
Then $J_{\mathcal F}$ is a Hopf $2$-cocycle for $\Os(G)$, and we have
\begin{equation}\label{eq:star-from-F}
f\cdot_{J_{\mathcal F}}g
=
m\bigl(\mathcal F\triangleright(f\otimes g)\bigr),
\qquad f,g\in\Os(G).
\end{equation}

Recall that the basic factors used in {\bf Jordanian-chain} constructions are
\begin{align}
\mathcal F_R^{\mathrm{std}}
&:=
\exp\!\left(\sum_{i,j}c_{ij}H_i\otimes H_j\right),
\label{eq:Reshetikhin-factor}\\
\mathcal F_J^{\mathrm{std}}
&:=
\exp(H\otimes\sigma),
\qquad
\sigma:=\log(1+\xi E),
\label{eq:Jordanian-factor}\\
\mathcal F_E^{\mathrm{std}}
&:=
\exp\bigl(\xi A\otimes Be^{-\beta\sigma}\bigr),
\label{eq:extension-factor}
\end{align}
where $\xi\in\C^\times$, $\alpha,\beta\in \C$, $A,B,E,H\in \mathfrak g$, and 
\[
[H,E]=E,
\qquad
[H,A]=\alpha A,
\qquad
[H,B]=\beta B,
\qquad
[A,B]=E,
\qquad
\alpha+\beta=1.
\]
Here the $H_i$ in \eqref{eq:Reshetikhin-factor} commute, and the
extended carrier also satisfies $[A,E]=[B,E]=0$.
The factor $\mathcal F_R^{\mathrm{std}}$ in
\eqref{eq:Reshetikhin-factor} is an {\bf abelian} (or
\emph{Reshetikhin}) twist, the factor $\mathcal F_J^{\mathrm{std}}$
in \eqref{eq:Jordanian-factor} is the {\bf basic Jordanian} twist, and 
the element $\mathcal F_E^{\mathrm{std}}$ in
\eqref{eq:extension-factor} is the {\bf extension factor}. The element 
\[
\mathcal F_{\mathrm{EJ}}^{\mathrm{std}}
:=
\mathcal F_E^{\mathrm{std}}\mathcal F_J^{\mathrm{std}}
=
\exp\bigl(\xi A\otimes Be^{-\beta\sigma}\bigr)
\exp(H\otimes\sigma)
\]
is the {\bf complete extended Jordanian} twist. 
More precisely, $\mathcal F_J^{\mathrm{std}}$ first deforms $\Delta$ to
\[
\Delta_J(a)
=
\mathcal F_J^{\mathrm{std}}\Delta(a)
(\mathcal F_J^{\mathrm{std}})^{-1},
\qquad a\in U(\mathfrak g).
\]
The extension factor $\mathcal F_E^{\mathrm{std}}$ is then a twist
for $\Delta_J$, so their ordered product is a twist for the original
coproduct; see \cite{KLM,KLO,LD1}. 

The displayed factors are written in the standard convention of the
cited literature. A standard twist $\mathcal F^{\mathrm{std}}$
satisfies
\[
 (\mathcal F^{\mathrm{std}}\otimes 1)(\Delta\otimes\id)
       (\mathcal F^{\mathrm{std}})
 =(1\otimes \mathcal F^{\mathrm{std}})(\id\otimes\Delta)
       (\mathcal F^{\mathrm{std}}).
\]
Its inverse satisfies our right-oriented convention and is the
operator inserted into \eqref{eq:J-from-F}. In the Jordanian examples
below we use $\mathcal F(\xi):=\mathcal F^{\mathrm{std}}(-\xi)^{-1}$
so that the first-order commutators have the displayed positive signs.
In particular, for the complete extended Jordanian twist this gives
\[
\begin{aligned}
\mathcal F_{\mathrm{EJ}}(\xi)
&:=\bigl(\mathcal F_{\mathrm{EJ}}^{\mathrm{std}}(-\xi)\bigr)^{-1}\\
&=\exp(-H\otimes\sigma_{-\xi})\,
  \exp\bigl(\xi A\otimes Be^{-\beta\sigma_{-\xi}}\bigr),
\qquad \sigma_{-\xi}:=\log(1-\xi E).
\end{aligned}
\]
Thus inversion reverses the order of the Jordanian and extension
factors.
The Jordanian and extended-Jordanian factors are constructed in
\cite{KLM}, chains for the classical Lie algebras are developed in
\cite{KLO}, and peripheric factors and their Jordanian or Reshetikhin
enlargements are studied in \cite{LD1,LD2}.  Coboundary interpolations
between Jordanian twists are analyzed in \cite{BMP,MMSS}.  A
coboundary modification produces a gauge-equivalent Hopf $2$-cocycle,
and therefore does not change the isomorphism class of the associated
one-sided twisted algebra.

\subsection{Primitive ideals, normal elements, and division-ring centers}
\label{sec:prelim-ring-theory}

Let $A$ be an associative algebra. Recall that  
the annihilators of simple left $A$-modules are called {\bf primitive ideals}. For example, every {\bf two-sided maximal} ideal of $A$ is primitive. Recall also that an element $a\in A$ is {\bf regular} if it is not a zero divisor and {\bf normal} if $Aa=aA$.

The following two observations concern simple rings and their Ore localizations.

\begin{lemma}\label{lem:normal-localization-trivial}
Let \(R\) be a simple ring, and let \(S\) be a left and right Ore set generated
by nonzero normal elements of \(R\).  Then
$S\subseteq R^\times$ and $S^{-1}R=R$ canonically.
\end{lemma}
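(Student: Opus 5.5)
The argument is short. It rests on one observation: a nonzero normal element of a simple ring generates the whole ring as a two-sided ideal. The canonical identification $S^{-1}R=R$ then follows from the universal property of Ore localization.

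\emph{Step 1: each nonzero normal generator is a unit.} Let $a\in R$ be nonzero with $Ra=aR$. Then $Ra$ is closed under left multiplication by $R$. Since $Ra=aR$, it is also closed under right multiplication: $(ra)s=r(as)\in r(aR)=r(Ra)\subseteq Ra$. So $Ra$ is a two-sided ideal. It contains $a\neq0$, and $R$ is simple, so $Ra=R=aR$. Hence there are $b,c\in R$ with $ba=1=ac$, and then $b=b(ac)=(ba)c=c$. Thus $a\in R^\times$.

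\emph{Step 2: all of $S$ consists of units.} By hypothesis $S$ is generated, as a multiplicative set, by such elements. Every element of $S$ is therefore a finite product of units (the empty product being $1$), so $S\subseteq R^\times$. If the phrase ``generated by'' is read more loosely, for example as the multiplicative closure together with $1$, the same conclusion holds. No Ore-set saturation issue arises, because products of units are units.

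\emph{Step 3: the localization is trivial.} The localization $S^{-1}R$ comes with $\iota:R\to S^{-1}R$ and is universal among ring maps $\phi:R\to R'$ with $\phi(S)\subseteq R'^\times$. The identity $\id_R$ is such a map, since $S\subseteq R^\times$, and it is trivially universal. Therefore $\iota$ is an isomorphism. Concretely, $s^{-1}r\mapsto s^{-1}r$ computed in $R$ is the inverse of $\iota$. It is well defined because $\iota$ is injective: the elements of $S$ are regular, so no element is killed by an element of $S$.

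The only point requiring any care is Step 1, namely checking that normality makes $Ra$ two-sided and that the one-sided inverses coincide. Everything else is formal.
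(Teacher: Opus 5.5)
Your proposal is correct and follows essentially the same route as the paper. Both arguments use simplicity to get $Ra=aR=R$ for each nonzero normal generator, then the identity $b=b(ac)=(ba)c=c$ to get a two-sided inverse, and finally the fact that localizing at units does not change the ring, which you justify explicitly through the universal property.
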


\begin{proof}
If \(0\neq s\in R\) is normal, then \(Rs=sR\) is a nonzero two-sided
ideal of \(R\).  Since \(R\) is simple, \(Rs=sR=R\).  Hence, there are
\(a,b\in R\) such that \(as=1\) and \(sb=1\), which implies that
\[
a=a(sb)=(as)b=b,
\]
so \(s\) is a unit.  Thus every generator of \(S\), and hence every
element of \(S\), is a unit.  Since localization at units does not change the
ring, the claim follows.
\end{proof}

\begin{lemma}\label{lem:fraction-center}
If $R$ is a simple Ore domain and $D:={\rm Frac}(R)$, then $Z(D)=Z(R)$.
\end{lemma}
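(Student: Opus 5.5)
The argument proves the two inclusions $Z(R)\subseteq Z(D)$ and $Z(D)\subseteq Z(R)$ separately. Write $D=\{s^{-1}a \mid a,s\in R,\ s\neq 0\}$. This uses the fact that $R$ is an Ore domain, so every element of $D$ is a left fraction and also a right fraction.

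\emph{First inclusion, $Z(R)\subseteq Z(D)$.} Let $z\in Z(R)$ and $0\neq s\in R$. Then $zs=sz$, and multiplying by $s^{-1}$ on both sides gives $s^{-1}z=zs^{-1}$. So $z$ commutes with $R$ and with all inverses $s^{-1}$, hence with every $s^{-1}a$. Therefore $z\in Z(D)$.

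\emph{Second inclusion, $Z(D)\subseteq Z(R)$.} This is the main step, and simplicity of $R$ is what makes it work. Take $q\in Z(D)$ and form its ideal of denominators
\[
I:=\{\,r\in R \mid rq\in R\,\}.
\]

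\emph{$I$ is a two-sided ideal.} It is clearly an additive subgroup. It is a left ideal, since $r\in I$ and $x\in R$ give $(xr)q=x(rq)\in R$. It is a right ideal because $q$ is central in $D$: for $r\in I$ and $x\in R$,
\[
(rx)q=r(xq)=r(qx)=(rq)x\in R.
\]

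\emph{$I$ is nonzero.} Write $q=s^{-1}a$ with $0\neq s\in R$. Then $sq=a\in R$, so $0\neq s\in I$.

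\emph{Conclusion.} Since $R$ is simple, $I=R$, so $1\in I$, which means $q=1\cdot q\in R$. Since $q$ commutes with all of $D\supseteq R$, we get $q\in Z(R)$. Combining the two inclusions gives $Z(D)=Z(R)$.

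The only potential subtlety is that centrality of $q$ in $D$ is genuinely needed to make $I$ a right ideal, as used above. Everything else is routine manipulation of Ore fractions.
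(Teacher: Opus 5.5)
Your proof is correct and follows essentially the same route as the paper: it forms the ideal of denominators of a central element $q\in Z(D)$, uses centrality to make it two-sided, uses a fraction representation to make it nonzero, and uses simplicity to conclude $q\in R$. The paper writes the ideal as $\{a\in R\mid qa\in R\}$ rather than $\{r\in R\mid rq\in R\}$, which is the same set by centrality, and it calls the inclusion $Z(R)\subseteq Z(D)$ immediate; you spell that inclusion out.
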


\begin{proof}
For $z\in Z(D)$, let $I_z:=\{a\in R\mid za\in R\}$. A common denominator
shows that $I_z\neq0$, and centrality of $z$ makes $I_z$ a two-sided
ideal of $R$. Simplicity gives $I_z=R$, so $z=z1\in R$.
The reverse inclusion is immediate.
\end{proof}

\subsection{Braided-commutative Galois objects}
\label{sec:prelim-galois}

Let $(H,R)$ be a minimal cotriangular Hopf algebra, and let $A$ be a right $H$-comodule algebra with coaction
$\rho(a)=a_{(0)}\otimes a_{(1)}$, $a\in A$. 
Recall (see e.g. \cite{Bi}) that $A$ is a right {\bf $H$-Galois object} over $\C$ if the left $A$-linear map   
\[
\gamma:A\otimes A\longrightarrow A\otimes H,
\qquad
\gamma(a\otimes b)=ab_{(0)}\otimes b_{(1)}
\]
is bijective. For $h\in H$, let
$h^{[1]}\otimes h^{[2]}:=
\gamma^{-1}(1\otimes h)$, so that
\begin{equation}\label{eq:translation-map-identity}
h^{[1]}h^{[2]}_{(0)}\otimes h^{[2]}_{(1)}
=
1\otimes h,\qquad h\in H.
\end{equation}

Recall also that $A$ is {\bf $R$-commutative} if 
\begin{equation}\label{eq:R-commutative}
ab
=
b_{(0)}a_{(0)}R(a_{(1)},b_{(1)}),
\qquad a,b\in A.
\end{equation}

\begin{lemma}
\label{lem:braided-galois-simple}
Let $(H,R)$ be a minimal cotriangular Hopf algebra, and let $A$ be $R$-commutative 
and right $H$-Galois.  Then $A$ is a simple algebra.
\end{lemma}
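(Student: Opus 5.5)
Let $I\subseteq A$ be a nonzero two-sided ideal. The plan is to show that $I$ contains a nonzero $H$-subcomodule, and then use the Galois property to force $1\in I$.

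\emph{Step 1: the coideal $\rho(I)$ is large.} For $a\in I$ and $b\in A$, $R$-commutativity \eqref{eq:R-commutative} gives
\[
ba=a_{(0)}b_{(0)}R(b_{(1)},a_{(1)})\in I .
\]
Since $b$ is arbitrary, this means $a_{(0)}\otimes R(-,a_{(1)})$ applied to coefficients of $b$ lands in $I$. We exploit this using the Galois map. Write $b=h^{[2]}$ for an arbitrary $h\in H$ and multiply on the left by $h^{[1]}$. By \eqref{eq:translation-map-identity}, $h^{[1]}h^{[2]}_{(0)}\otimes h^{[2]}_{(1)}=1\otimes h$, so
\[
h^{[1]}\,h^{[2]}\,a
= h^{[1]}a_{(0)}h^{[2]}_{(0)}R(h^{[2]}_{(1)},a_{(1)}).
\]
Moving $a_{(0)}$ past $h^{[1]}$ with another application of $R$-commutativity should yield, after the translation-map identity collapses the $h$-terms,
\[
a_{(0)}\,R'(h,a_{(1)})\in I\qquad\text{for all } h\in H,
\]
where $R'$ is $R$ composed with the convolution inverse and antipode as appropriate. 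Because $(H,R)$ is triangular, these factors should simplify to $R(h,a_{(1)})$ up to the antipode. Checking this identity carefully — the Sweedler bookkeeping of two braidings against the translation map — is the main obstacle. If the direct route fails, I would first reduce to the case $A=H_J$, which is what the later application needs, where everything is explicit.

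\emph{Step 2: pass to a subcomodule.} Minimality means $h\mapsto R(h,-)$, or equivalently $h\mapsto R(-,h)$ by triangularity, has image separating points of $H$. The right $H$-comodule $V$ generated by $a$ is finite dimensional, spanned by the $a_{(0)}$. The set of functionals $\{R(h,-)\}_{h\in H}$, restricted to the finite-dimensional coefficient space $C$ of $V$, is then a subspace of $C^*$ that separates points of $C$, hence equal to $C^*$. So $(\id\otimes\phi)\rho(a)\in I$ for every $\phi\in C^*$. Taking all $\phi$ shows $V\subseteq I$, so $I\supseteq V\ne0$ contains a nonzero subcomodule. In particular the largest subcomodule $I'$ contained in $I$ is a nonzero ideal and a subcomodule.

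\emph{Step 3: conclude with Galois.} Now $I'$ is an $H$-stable ideal, so $A/I'$ is a right $H$-comodule algebra, and the Galois map for $A/I'$ is a quotient of $\gamma$, hence surjective. For a Galois object $A$ over $\C$, the standard result (e.g.\ \cite{Bi}, faithful flatness over $A^{\mathrm{co}H}=\C$, or the correspondence for Galois objects) says that $H$-costable ideals are $0$ or $A$. Concretely, $\gamma$ restricts to a bijection $A\otimes I'\to A\otimes H$-submodule. Surjectivity onto $A\otimes H$ follows from $I'$ containing an element with $\eps$-component nonzero, and this forces $1\otimes 1$ to lie in $A\otimes I'$-image, hence $1\in I'$. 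Alternatively, $\dim(A/I')^{\mathrm{co}H}\ge1$ and surjectivity of the induced Galois map forces $I'=A$. Thus $I=A$, and $A$ is simple.
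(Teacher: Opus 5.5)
Your overall architecture (ideal $\Rightarrow$ subcomodule via nondegeneracy of $R$ $\Rightarrow$ whole algebra via the Galois map) matches the paper's. However, Step~1, which carries the whole argument, fails as written.

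The element you form, $h^{[1]}h^{[2]}a$, is just $\varepsilon(h)a$. Indeed, applying $\id\otimes\varepsilon$ to $h^{[1]}h^{[2]}_{(0)}\otimes h^{[2]}_{(1)}=1\otimes h$ gives $h^{[1]}h^{[2]}=\varepsilon(h)1$. So no rewriting of it with $R$-commutativity, antipodes or triangularity can produce $a_{(0)}R(\cdot,a_{(1)})$. In your expansion $h^{[1]}a_{(0)}h^{[2]}_{(0)}R(h^{[2]}_{(1)},a_{(1)})$, the factor $a_{(0)}$ sits between the two legs of the translation map, and that is exactly what blocks the collapse.

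The missing idea is to use that $I$ is two-sided and sandwich $a$, so that $h^{[1]}ah^{[2]}\in I$. A single application of $R$-commutativity to $ah^{[2]}$ gives
\[
h^{[1]}ah^{[2]}=h^{[1]}h^{[2]}_{(0)}a_{(0)}R\bigl(a_{(1)},h^{[2]}_{(1)}\bigr)=a_{(0)}R(a_{(1)},h)\in I .
\]
No inverse of $R$, antipode, or triangularity is needed. Your fallback of reducing to $A=H_J$ does not prove the lemma as stated, and it would need the same computation anyway.

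With this correction, Step~2 goes through using the definition of minimality directly: the functionals $R(c,-)$, for $c$ ranging over a basis of the coefficient space, are linearly independent. It in fact shows that $\rho(a)\in I\otimes H$ for every $a\in I$. So $I$ itself is a subcomodule, and the auxiliary $I'$ is unnecessary.

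In Step~3, citing the standard fact is fine: a Galois object over $\C$ has no nontrivial costable ideals, e.g.\ via $M\cong M^{\mathrm{co}H}\otimes A$ for relative Hopf modules. But your two ``concrete'' justifications do not work. The second one never uses $I'\neq0$, so it would equally ``prove'' $A=0$ from $I'=0$. What is actually needed is injectivity of $\gamma$ on $A$ itself.

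The paper does this in two lines. Left $A$-linearity and surjectivity of $\gamma$ give $\gamma(A\otimes I)\subseteq I\otimes H=\gamma(I\otimes A)$. Injectivity then gives $A\otimes I\subseteq I\otimes A$. Applying $\id\otimes\lambda$ with $\lambda(x)=1$ for some $0\neq x\in I$ yields $A\subseteq I$.
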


\begin{proof}
Let $I$ be a two-sided ideal of $A$, and let $a\in I$.  Then $h^{[1]}ah^{[2]}\in I$ for every
$h\in H$.  Thus, by
\eqref{eq:translation-map-identity}-\eqref{eq:R-commutative}, we have
\begin{align*}
h^{[1]}ah^{[2]}
&=
h^{[1]}h^{[2]}_{(0)}a_{(0)}
R\left(a_{(1)},h^{[2]}_{(1)}\right)
=
a_{(0)}R(a_{(1)},h)
\in I\qquad (h\in H).
\end{align*}
Writing $\rho(a)=\sum_{i=1}^n a_i\otimes c_i$,
where $c_1,\ldots,c_n\in H$ are linearly independent, we have
\begin{equation}\label{eq:R-linear-combinations}
\sum_{i=1}^nR(c_i,h)a_i\in I,
\qquad h\in H.
\end{equation}
Since $R$ is nondegenerate, the functionals
$R(c_i,-)$ are linearly independent, so we can choose
$h_1,\ldots,h_n$ in $H$, such that the matrix
$\bigl(R(c_i,h_j)\bigr)_{i,j}$ is invertible.  Thus, by
\eqref{eq:R-linear-combinations}, $a_i\in I$ for every $i$.
Therefore, $\rho(I)\subseteq I\otimes H$;
that is, the ideal $I$ is a right $H$-subcomodule. Thus, 
$\gamma(A\otimes I)\subseteq I\otimes H$, and since $\gamma$ is left $A$-linear and surjective,
we have $I\otimes H=\gamma(I\otimes A)$.
Thus, by injectivity of $\gamma$, we get
\begin{equation}\label{eq:tensor-ideal-inclusion}
A\otimes I\subseteq I\otimes A.
\end{equation}

Now if $I\neq0$, choose $0\neq x\in I$ and a linear functional
$\lambda:A\to\C$ with $\lambda(x)=1$. Then for every $b\in A$,
$b\otimes x\in I\otimes A$ by \eqref{eq:tensor-ideal-inclusion},
so applying $\id\otimes\lambda$ yields $b\in I$.  Thus $I=A$, as desired.
\end{proof}

\subsection{Support semi-invariants and affine charts}
\label{sec:prelim-support-charts}

Let $J$ be a Hopf $2$-cocycle for $G$ with support $H$. By the support theorem \cite[Theorem 3.1]{G}, we can choose a representative of the gauge class of $J$ of the form
$J=J_H\circ(\pi\otimes\pi)$, where $\pi:\Os(G)\twoheadrightarrow\Os(H)$
and $J_H\in(\Os(H)\otimes\Os(H))^*$ is a minimal Hopf $2$-cocycle for $H$.

We use the right $H$-coaction on $\Os(G)$
\[
 \rho:=(\id\otimes\pi)\Delta,\qquad
 \rho(f)(a,h)=f(ah).
\]
Write $\rho(a)=a_{(0)}\otimes a_{(1)}$, where
$a_{(1)}\in\Os(H)$. For the chosen support representative the product is
\[
 a\cdot_J b=a_{(0)}b_{(0)}J_H(a_{(1)},b_{(1)}).
\]
The left $G$-action on $\Os(G)_J$ is
$(g\cdot a)(x)=a(g^{-1}x)$. It commutes with the right support coaction.

A right $H$-semi-invariant of weight $\chi\in X(H)$ is a function $f\in \Os(G)$ satisfying
$\rho(f)=f\otimes\chi$, equivalently $f(ah)=\chi(h)f(a)$.
When $\chi$ is not specified, we will say that $f$ is {\bf $H$-semi-invariant}.

\begin{lemma}\label{lem:q615-normal-localization}
Let $\chi\in X(H)$, and let $f\in \mathscr O(G)$ be right $H$-semi-invariant of weight $\chi$.
Then the following hold:
\begin{enumerate}
\item
\begin{equation}\label{eq:q615-normal-ideals}
\mathscr O(G)_J\cdot_J f=f\mathscr O(G)=f\cdot_J \mathscr O(G)_J,
\end{equation}
as vector subspaces of $\mathscr O(G)$.
In particular, $f$ is {\bf normal} in $\mathscr O(G)_J$.  

\item
If moreover $f$ is regular in
$\mathscr O(G)$, it is also regular in $\mathscr O(G)_J$, and there is an algebra isomorphism 
\begin{equation}\label{eq:q615-localization}
\mathscr O(G)_J[f^{-1}]\cong\bigl(\mathscr O(G)[f^{-1}]\bigr)_{J},
\end{equation}
where the expression on the right means twisting of the right $\mathscr O(H)$-comodule
algebra $\mathscr O(G)[f^{-1}]$.
\end{enumerate}
\end{lemma}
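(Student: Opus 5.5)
The plan is to compute products with $f$ directly, using the support representative $J=J_H\circ(\pi\otimes\pi)$. With this representative the twisted product is $a\cdot_J b=a_{(0)}b_{(0)}J_H(a_{(1)},b_{(1)})$, computed with the coaction $\rho$.

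For (1), the semi-invariance $\rho(f)=f\otimes\chi$ gives, for every $a\in\Os(G)$,
\[
a\cdot_J f=a_{(0)}f\,J_H(a_{(1)},\chi),\qquad f\cdot_J a=f a_{(0)}\,J_H(\chi,a_{(1)}).
\]
Define linear maps $\lambda_\chi,\mu_\chi:\Os(G)\to\Os(G)$ by $\lambda_\chi(a):=a_{(0)}J_H(a_{(1)},\chi)$ and $\mu_\chi(a):=a_{(0)}J_H(\chi,a_{(1)})$. Then $a\cdot_J f=\lambda_\chi(a)f$ and $f\cdot_J a=f\mu_\chi(a)$.

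The key point is that $\lambda_\chi$ and $\mu_\chi$ are bijective. The functional $\phi:=J_H(-,\chi)$ on $\Os(H)$ is convolution-invertible, with inverse $J_H^{-1}(-,\chi)$. Indeed, $\chi$ is grouplike, so $\Delta(\chi)=\chi\otimes\chi$, and therefore
\[
J_H(c_{(1)},\chi)\,J_H^{-1}(c_{(2)},\chi)=(J_H*J_H^{-1})(c\otimes\chi)=\eps(c).
\]
Since $\rho$ is coassociative, the operator $a\mapsto a_{(0)}\phi(a_{(1)})$ is a convolution action of $\Os(H)^*$ on $\Os(G)$. Hence $\lambda_\chi$ is invertible, with inverse $a\mapsto a_{(0)}J_H^{-1}(a_{(1)},\chi)$. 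The same argument applies to $\mu_\chi$. Consequently
\[
\Os(G)_J\cdot_J f=\lambda_\chi(\Os(G))f=\Os(G)f,\qquad f\cdot_J\Os(G)_J=f\Os(G),
\]
which gives \eqref{eq:q615-normal-ideals} and normality.

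For (2), suppose $f$ is regular in $\Os(G)$. If $a\cdot_J f=0$, then $\lambda_\chi(a)f=0$, so $\lambda_\chi(a)=0$ and hence $a=0$. The same argument handles $f\cdot_J a=0$, so $f$ is regular in $\Os(G)_J$. Since $f$ is normal and regular, $\{f^n\}$ is a left and right Ore set, so $\Os(G)_J[f^{-1}]$ exists.

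Next, $\Os(G)[f^{-1}]$ is a right $\Os(H)$-comodule algebra, with $f^{-1}$ semi-invariant of weight $\chi^{-1}$, because $\rho$ is an algebra map and $\chi$ is invertible. So the twist $(\Os(G)[f^{-1}])_J$ is defined by the same formula and is associative by the cocycle identity. In that algebra $f$ is invertible: its twisted inverse is $f^{-1}J_H(\chi^{-1},\chi)^{-1}$, which one checks using the normalization of $J_H$ on grouplikes. The inclusion $\Os(G)_J\hookrightarrow(\Os(G)[f^{-1}])_J$ is an algebra map and sends $f$ to a unit. By the universal property of Ore localization it therefore extends to a map $\Os(G)_J[f^{-1}]\to(\Os(G)[f^{-1}])_J$.

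Surjectivity holds because every element $af^{-n}$ is a twisted product of $a$ with a twisted power of $f^{-1}$, up to a scalar and an application of some $\mu$-type bijection, as in part (1). Injectivity follows because the localization map is injective, $f$ being regular, and every element of $\Os(G)_J[f^{-1}]$ has the form $a\cdot_J f^{-n}$.

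The main obstacle is the bookkeeping in (2). One has to check that twisted powers of $f$ are scalar multiples of ordinary powers, namely $f^{\cdot_J n}=c_nf^n$ with $c_n\neq0$ coming from the values $J_H(\chi^i,\chi)$. One also has to check that $J_H$ restricted to the grouplikes $\chi^{\pm1}$ gives nonzero scalars. I would handle both by restricting $J_H$ to the group algebra of the subgroup of $X(H)$ generated by $\chi$, where it becomes an ordinary $2$-cocycle with values in $\C^\times$.
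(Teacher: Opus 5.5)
Your proof is correct and follows the paper's argument essentially step for step. Your $\lambda_\chi,\mu_\chi$ are the paper's $U_\chi,T_\chi$, made bijective via $J_H^{-1}$. Part (2) proceeds the same way: transfer of regularity, invertibility of $f$ in the twisted localization from $J_H(\chi,\chi^{-1})=J_H(\chi^{-1},\chi)\neq0$, surjectivity from $a\cdot_J f^{-n}=U_{\chi^{-n}}(a)f^{-n}$, and the Ore universal property. Your convolution-action justification of bijectivity and your explicit injectivity argument fill in details the paper leaves implicit.
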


\begin{proof}
(1) Define linear maps $T_\chi,U_\chi:\mathscr O(G)\longrightarrow \mathscr O(G)$ by
\[
 T_\chi(a)=a_{(0)}J_H(\chi,a_{(1)}),\qquad
 U_\chi(a)=a_{(0)}J_H(a_{(1)},\chi).
\]
Since $\chi\in \mathscr O(H)$ is grouplike, both maps are bijective with inverses obtained by replacing $J_H$
with $J_H^{-1}$.  Then for every $g\in \mathscr O(G)$, we have
\[
 f\cdot_J g=fT_\chi(g),\qquad g\cdot_J f=fU_\chi(g),
\]
where the products on the right are ordinary products in $\mathscr O(G)$.
This proves \eqref{eq:q615-normal-ideals}.  

(2) By \eqref{eq:q615-normal-ideals}, if $f$ is regular in
$\mathscr O(G)$, it is also regular in $\mathscr O(G)_J$. Since a normal regular element has a two-sided Ore localization, we may form the noncommutative Ore localization
$(\Os(G)_J)[f^{-1}]$. Separately, the right $H$-coaction extends to the
commutative localization $\Os(G)[f^{-1}]$ by
$$\rho(f^{-1})=f^{-1}\otimes\chi^{-1}.$$
Now since 
$$f\cdot_J f^{-1}=J_H(\chi,\chi^{-1})1
 =J_H(\chi^{-1},\chi)1=f^{-1}\cdot_J f,$$
and $J_H(\chi,\chi^{-1})
 =J_H(\chi^{-1},\chi)\ne 0$, it follows that $f$ is invertible in $\left(\mathscr O(G)[f^{-1}]\right)_J$.  Also, we have $a\cdot_J f^{-n}
 =U_{\chi^{-n}}(a)f^{-n}$ for every $n$. Since $U_{\chi^{-n}}$ is bijective, every ordinary fraction lies in the
algebra generated by $\mathscr O(G)_J$ and the twisted inverse of $f$.  Thus, \eqref{eq:q615-localization} follows from the Ore universal property and regularity of $f$.
\end{proof}

\begin{lemma}\label{lem:q615-affine-charts}
Let $q:G\to X:=G/H$ be the quotient map.  There are finitely many
nonzero right $H$-semi-invariants $f_1,\ldots,f_s\in\mathscr O(G)$,
and affine open subsets $X_1,\ldots,X_s$ covering $X$, such that
\[
 q^{-1}(X_i)=G_{f_i}:=\{a\in G\mid f_i(a)\ne0\}.
\]
If moreover $G=G^\circ H$, then each $f_i$ is regular in $\mathscr O(G)$.
\end{lemma}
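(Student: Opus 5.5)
The plan is to use Chevalley's theorem in the semi-invariant form. First I will embed the quotient $X=G/H$ into projective space, and then produce the charts as preimages of standard affine opens.

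\emph{Step 1: the projective embedding.} By Chevalley's theorem there is a finite-dimensional rational $G$-module $V$ and a line $L=\C v\subseteq V$ whose stabilizer is exactly $H$. The standard construction also gives that $H$ acts on $L$ by a character $\chi\in X(H)$. Moreover, the orbit map $g\mapsto [gv]$ identifies $X$ with the orbit $G\cdot[v]\subseteq\mathbb P(V)$, which is locally closed and hence quasi-projective.

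\emph{Step 2: the semi-invariants and the charts.} Choose a basis $\lambda_1,\ldots,\lambda_s$ of $V^*$ and set $f_i(a):=\lambda_i(av)$. Each $f_i$ lies in $\Os(G)$, since matrix coefficients are regular. It is right $H$-semi-invariant of weight $\chi$, because
\[
f_i(ah)=\lambda_i(ahv)=\chi(h)\lambda_i(av)=\chi(h)f_i(a).
\]
Take $X_i:=X\cap\{\lambda_i\neq0\}$. Then $q^{-1}(X_i)=G_{f_i}$ by construction, and the $X_i$ cover $X$ because $av\neq0$ for every $a$. Discarding any $f_i$ that vanishes identically, each remaining $X_i$ is nonempty. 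It is affine as a closed subvariety of the affine space $\{\lambda_i\ne0\}$ intersected with the locally closed orbit. If the intersection is not closed in the affine chart, I will refine the cover by principal opens $D(p)\cap\overline X$ using extra semi-invariant multipliers: I take homogeneous polynomials $p$ in the $\lambda_i$ that vanish on $\overline{X}\setminus X$. Then $p(av)$ is again an $H$-semi-invariant, of weight $\chi^{\deg p}$. Principal opens of the affine cone are affine, and finitely many such opens cover $X$. \emph{This refinement is the main technical point.} The cleanest route is to work on $\overline{G\cdot[v]}$ and use that $X$ is open in its closure, so $X$ is covered by finitely many basic opens $\{p\neq0\}$ of $\overline X$.

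\emph{Step 3: regularity when $G=G^\circ H$.} Every component of $G$ has the form $gG^\circ$, and by hypothesis we may choose $g\in H$. The function $f_i$ is not identically zero, so it is nonzero at some point $a_0$. Then semi-invariance gives $f_i(a_0h)=\chi(h)f_i(a_0)\neq0$ for all $h\in H$. Writing $a_0\in g_0G^\circ$, I will move $a_0$ within its coset: any component $hG^\circ$ meets $a_0 H\,G^\circ$-translates. More directly, since $G^\circ$ is normal and $G=G^\circ H$, every component has the form $G^\circ h=a_0'h G^\circ$-type sets containing some $a_0h'$ with $a_0$ fixed. Hence $f_i$ is nonzero at some point of each component. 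A function on the reduced variety $G$ is a zero divisor in $\Os(G)$ iff it vanishes identically on some irreducible component, since the components of $G$ are its connected components, which are disjoint. So $f_i$ is regular. I expect this component bookkeeping to be routine; the main obstacle is ensuring the affineness of the charts in Step 2.
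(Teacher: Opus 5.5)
Your proposal is correct and is essentially the paper's proof. The shared ingredients are Chevalley's line $\C v$ with character $\chi$, affine charts $\overline X\cap D_+(p)\subseteq X$ cut out by homogeneous $p$ vanishing on $\overline X\setminus X$, semi-invariants $f_p(a)=p(av)$ of weight $\chi^{\deg p}$, quasi-compactness, and transitivity of right $H$-translation on the components of $G$ for regularity. You should drop the first attempt with linear coordinates $\lambda_i$, since, as you noticed, the sets $X\cap\{\lambda_i\neq0\}$ need not be affine. You should also state Step 3 cleanly: since $G=a_0HG^\circ$, every component $cG^\circ$ contains some $a_0h$, and $f_i(a_0h)=\chi(h)f_i(a_0)\neq0$ there.
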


\begin{proof}
By Chevalley's theorem, there are a finite-dimensional rational
$G$-module $W$ and a line $\mathbb C v\subset W$ whose stabilizer is
$H$.  Thus, $X$ is the locally closed orbit $G\cdot [v]\subseteq\mathbb P(W)$
(see e.g. \cite[Theorems~4.27 and~7.18]{Mi}).
Let $\overline X$ be the projective closure of $X$, and let
$D:=\overline X\setminus X$.  For each $x\in X$, choose a 
homogeneous polynomial $p$ of positive degree vanishing on $D$, and satisfying  $p(x)\ne 0$.  Then
$X_p:=\overline X\cap D_+(p)\subseteq X$
is affine open, and contains $x$.

Let $\chi\in X(H)$ such that $hv=\chi(h)v$ for every $h\in H$.  If $p$ has degree $d$, the function
$f_p(a):=p(av)$, $a\in G$, is regular and satisfies for every $h\in H$,
\[
 f_p(ah)=\chi(h)^d f_p(a),\qquad q^{-1}(X_p)=G_{f_p}.
\]
Finally, since $X$ is quasi-compact, there are finitely many such charts covering $X$.

If moreover $G=G^\circ H$, then right multiplication by $H$ permutes the
connected components of $G$ transitively.  Therefore a nonzero right
$H$-semi-invariant cannot vanish identically on any component, and since each component is irreducible and reduced, the semi-invariant is regular in $\mathscr O(G)$.
\end{proof}

\begin{lemma}\label{lem:q615-fiber}
Let $X_f\subseteq X=G/H$ be one of the affine charts in
Lemma~\ref{lem:q615-affine-charts}, and assume that $f$ is regular. Then the following hold:
\begin{enumerate}
\item
$\mathscr O(X_f)$ embeds in the center of the algebra $(\mathscr O(G)_J)[f^{-1}]$.  
\item
For each $x:=aH\in X_f(\mathbb C)$,
with maximal ideal $\mathfrak m_x\subset \mathscr O(X_f)$, there is an algebra isomorphism
\begin{equation}
\label{eq:q615-simple-fiber}
(\mathscr O(G)_J)[f^{-1}]/\mathfrak m_x(\mathscr O(G)_J)[f^{-1}]\cong\mathscr O(H)_{J_H}.
\end{equation}
\end{enumerate}
\end{lemma}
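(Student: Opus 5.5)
For part (1), identify $\mathscr O(X_f)$ with the right $H$-invariants of the commutative localization: $q^{-1}(X_f)=G_f$ and $q$ is a quotient map, so $\mathscr O(X_f)=\mathscr O(G)[f^{-1}]^{\mathrm{co}\,H}=\{a\in\mathscr O(G)[f^{-1}]\mid\rho(a)=a\otimes1\}$. By Lemma~\ref{lem:q615-normal-localization}(2) we may work in $(\mathscr O(G)[f^{-1}])_J$, with product $a\cdot_J b=a_{(0)}b_{(0)}J_H(a_{(1)},b_{(1)})$. If $\rho(z)=z\otimes1$, the normalization $J_H(1,-)=\eps=J_H(-,1)$ gives $z\cdot_J b=zb=b\cdot_J z$. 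Thus coinvariants are central, and the twisted product restricted to $\mathscr O(X_f)$ is the ordinary commutative one, which yields the embedding.

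For part (2), fix $x=aH\in X_f$. Since $G_f\to X_f$ is the restriction of the $H$-torsor $G\to G/H$ to the affine open $X_f$, the map $G_f\times_{X_f}\{x\}\to aH$ is an isomorphism, and so is the map $H\to aH$, $h\mapsto ah$. At the level of coordinate rings this gives
\[
\mathscr O(G)[f^{-1}]/\mathfrak m_x\mathscr O(G)[f^{-1}]\cong\mathscr O(aH)\cong\mathscr O(H),\qquad \phi\mapsto\bigl(h\mapsto\phi(ah)\bigr).
\]
The reducedness of this fiber should follow from the smoothness of $G\to G/H$ in characteristic zero. This map is $H$-colinear, because the right coaction is right translation and this commutes with $h\mapsto ah$.

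Next, the ideal $\mathfrak m_x\mathscr O(G)[f^{-1}]$ computed in the twisted algebra coincides with the ordinary one, since by part (1) $z\cdot_J b=zb$ for coinvariant $z$. Hence the twisted quotient is the quotient comodule algebra with product still given by $a_{(0)}b_{(0)}J_H(a_{(1)},b_{(1)})$. Transporting along the colinear isomorphism above, this product becomes the product $\phi_{(1)}\psi_{(1)}J_H(\phi_{(2)},\psi_{(2)})$ on $\mathscr O(H)$, which is exactly $\mathscr O(H)_{J_H}$.

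The main obstacle is the scheme-theoretic fiber identification: one must ensure that $\mathfrak m_x$ generates the full ideal of $aH$ in $\mathscr O(G_f)$. I would deduce this from faithful flatness of $G\to G/H$ (it is an $H$-torsor, locally trivial in the fppf topology), which gives $\mathscr O(G_f)\otimes_{\mathscr O(X_f)}\mathscr O(X_f)/\mathfrak m_x\cong\mathscr O(q^{-1}(x))$ and, via the torsor trivialization $G\times H\cong G\times_{G/H}G$, identifies this with $\mathscr O(H)$.
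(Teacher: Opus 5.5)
Your proposal is correct and follows essentially the same route as the paper: normalization of $J_H$ makes coinvariants central with unchanged multiplication, so the twisted fiber quotient is the twist of the ordinary fiber $\mathscr O(G_f)/\mathfrak m_x\mathscr O(G_f)$, and the torsor structure identifies that fiber $H$-colinearly with $\mathscr O(H)$. Your faithful-flatness and torsor-trivialization argument only makes explicit the scheme-theoretic fiber identification that the paper asserts directly.
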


\begin{proof}
Pullback along $q$ identifies $\Os(X_f)$ with the subalgebra $\mathscr O(G_f)^H\subseteq \mathscr O(G_f)$. For
$c\in\Os(X_f)$ and $a\in\Os(G_f)$, we have $\rho(c)=c\otimes1$ and
$c\cdot_Ja=ca=a\cdot_Jc$.
Thus $\Os(X_f)$ embeds centrally in $(\Os(G)_J)[f^{-1}]$, and
multiplication by this subalgebra is unchanged by twisting. In particular,
for $x:=aH$, we have
\[
 (\Os(G)_J)[f^{-1}]/\mathfrak m_x(\Os(G)_J)[f^{-1}]
 \cong
 \bigl(\Os(G_f)/\mathfrak m_x\Os(G_f)\bigr)_{J_H}.
\]
The map $G_f\to X_f$ is the restriction of the right $H$-torsor
$G\to G/H$, and its scheme-theoretic fiber at $x=aH$ is $aH$.
Thus, the map $H\to aH$, $h\mapsto ah$, identifies its coordinate algebra with $\mathscr O(H)$,
as a right $H$-comodule algebra, and twisting this
identification proves \eqref{eq:q615-simple-fiber}.
\end{proof}

\subsection{Normal series and characters of solvable groups}
\label{sec:prelim-solvable-groups}

In this subsection, let $G$ be {\bf connected solvable}, let $U:=R_u(G)$
and $T:=G/U$, and let $\mathfrak g:=\Lie(G)$ and
$\mathfrak u:=\Lie(U)$.

\begin{lemma}\label{lem:normal-series}
Let $n:=\dim U$. There is a sequence of closed connected subgroups
\[
   U=U_0\supset U_1\supset\cdots\supset U_n=1,
\]
such that $U_i$ is normal in $G$, and
$U_{i-1}/U_i\cong\Ga$, $1\leq i\leq n$.
\end{lemma}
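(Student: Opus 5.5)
We build the series by induction on $\dim U$, at each step producing a one-dimensional closed connected subgroup of $U$ that is normal in $G$ and isomorphic to $\Ga$, then passing to the quotient. The base case $n=0$ is trivial. For $n\ge1$, let $Z:=Z(U)^\circ$, the identity component of the center of $U$. Since $U$ is a nontrivial connected unipotent group, $U$ is nilpotent and $Z$ is a nontrivial connected unipotent commutative group. It is characteristic in $U$, hence normal in $G$ because $U=R_u(G)$ is normal. Over $\C$ (characteristic zero), a connected commutative unipotent group is a vector group, $Z\cong\Ga^m$ via $\exp:\Lie(Z)\to Z$. Under this isomorphism, closed connected subgroups correspond to linear subspaces of $\mathfrak z:=\Lie(Z)$, and conjugation by $G$ becomes the linear adjoint action on $\mathfrak z$.

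The key step is to find a $G$-stable line in $\mathfrak z$. The adjoint action $G\to GL(\mathfrak z)$ is a rational representation of a connected solvable group. By the Lie--Kolchin theorem, there is a common eigenvector $0\ne X\in\mathfrak z$, so $\C X$ is $\operatorname{Ad}(G)$-stable. Then $V:=\exp(\C X)$ is a closed connected subgroup of $Z\subseteq U$, isomorphic to $\Ga$ and normal in $G$.

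Now pass to $\bar G:=G/V$. This group is again connected solvable with $R_u(\bar G)=U/V$ of dimension $n-1$, since a quotient of a unipotent group is unipotent and $\bar G/(U/V)\cong T$ is a torus. By induction there is a chain $U/V=\bar U_0\supset\cdots\supset\bar U_{n-1}=1$ with each $\bar U_i$ normal in $\bar G$ and successive quotients $\cong\Ga$. Set $U_i:=\pi^{-1}(\bar U_i)$ for $0\le i\le n-1$, where $\pi:G\to G/V$, and set $U_n:=1$, so that $U_{n-1}=V$. These preimages are closed and normal in $G$. They are connected because $V$ and $\bar U_i$ are connected. We have $U_{i-1}/U_i\cong\bar U_{i-1}/\bar U_i\cong\Ga$ for $i\le n-1$, and $U_{n-1}/U_n=V\cong\Ga$.

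The only genuinely delicate point is the identification of $Z$ with a vector group compatibly with the $G$-action. This is what converts the existence of a $G$-stable line (Lie--Kolchin) into a $G$-normal subgroup isomorphic to $\Ga$, and it relies on characteristic zero through the exponential map.
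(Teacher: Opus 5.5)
Your proof is correct, but it takes a different route from the paper's. The paper works entirely at the Lie algebra level. It applies Lie's theorem to the adjoint action of the solvable Lie algebra $\mathfrak g$ on the ideal $\mathfrak u$, which gives in one stroke a complete flag $0=\mathfrak u_0\subset\cdots\subset\mathfrak u_n=\mathfrak u$ of $\mathfrak g$-stable subspaces. It then transports the whole flag to $U$ via the characteristic-zero Lie correspondence for unipotent groups: subalgebras of $\mathfrak u$ correspond to closed connected subgroups of $U$, and, since $G$ is connected, $\mathfrak g$-stability corresponds to normality in $G$.

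You instead build the series one step at a time at the group level. You apply Lie--Kolchin to the rational $G$-action on $\Lie(Z(U)^\circ)$, exponentiate only on the commutative group $Z(U)^\circ\cong\Ga^m$, and then pass to $G/V$ and pull back.

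The paper's argument is shorter. However, it relies on the full unipotent Lie correspondence: the exponential $\mathfrak u\to U$ must be an isomorphism of varieties, Baker--Campbell--Hausdorff is needed to see that subalgebras exponentiate to closed subgroups, and normality is detected infinitesimally. Your argument needs the exponential only on a vector group, where $G$-equivariance $\operatorname{Int}(g)\circ\exp=\exp\circ\operatorname{Ad}(g)$ is immediate. The price is the inductive bookkeeping: identifying $R_u(G/V)=U/V$, showing preimages are connected, and establishing $U_{i-1}/U_i\cong\bar U_{i-1}/\bar U_i$. You handle all of these correctly.

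Your series is also a central series of $U$ by construction. The paper's series is central too, but only because $\ad(\mathfrak u)$ acts nilpotently on $\mathfrak u$, which forces the one-dimensional subquotients of Lie's flag to be trivial $\mathfrak u$-modules.
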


\begin{proof}
The adjoint action of the solvable Lie algebra $\mathfrak g$ on its ideal
$\mathfrak u$ admits a complete invariant flag by Lie's theorem.  Namely,
there is a flag of $\mathfrak g$-stable ideals
\[
   0=\mathfrak u_0\subset\mathfrak u_1\subset\cdots
   \subset\mathfrak u_n=\mathfrak u,
   \qquad \dim(\mathfrak u_i/\mathfrak u_{i-1})=1\qquad (1\leq i\leq n).
\]
Since for a unipotent group over $\mathbb{C}$, Lie subalgebras correspond
to closed connected subgroups, and $\mathfrak g$-stable ideals correspond
to normal subgroups in $G$, reversing this flag gives the required
series.
\end{proof}

Choose $U_0,\dots,U_n$ as in Lemma \ref{lem:normal-series}, and let
$G_i:=G/U_i$, $0\leq i\leq n$.
Then $G_0=T$, $G_n=G$, and for every $1\leq i\leq n$ there is an exact sequence
\[
   1\longrightarrow \Ga
   \longrightarrow G_i
   \longrightarrow G_{i-1}
   \longrightarrow1.
\]
Set $A_i:=\Os(G_i)$.  Then
\[
   A_0\subset A_1\subset\cdots\subset A_n=\Os(G)
\]
is a chain of Hopf subalgebras.

\begin{lemma}
\label{lem:triangular-coordinate}
For every $1\le i\le n$, there exists $z_i\in A_i$ such that
$ A_i=A_{i-1}[z_i]$
as algebras, and
\begin{equation}\label{eq:triangular-coproduct}
   \Delta(z_i)
   =z_i\otimes1+\chi_i\otimes z_i+w_i,
\end{equation}
where $\chi_i\in A_{i-1}$ is grouplike and
$w_i\in A_{i-1}\otimes A_{i-1}$.
\end{lemma}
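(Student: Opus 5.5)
We work with the exact sequence $1\to\Ga\to G_i\to G_{i-1}\to1$ and regard $G_i\to G_{i-1}$ as a $\Ga$-torsor over the affine variety $G_{i-1}$.

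\textbf{Step 1: triviality of the torsor.} Since $G_{i-1}$ is affine and $H^1(G_{i-1},\Os)=0$ for the additive group, every $\Ga$-torsor over an affine base is trivial. Equivalently, the unipotent group $\Ga$ has the property that $G_i\to G_{i-1}$ admits a regular section. The quotient morphism of a connected solvable group by a normal unipotent subgroup has a section as varieties: for a split unipotent kernel over $\C$ this holds (Rosenlicht). Hence $G_i\cong G_{i-1}\times\Ga$ as varieties. Fix a section $s:G_{i-1}\to G_i$ with $s(1)=1$, and define $z_i\in A_i$ by
\[
g=s(\bar g)\,u(z_i(g)),
\]
where $u:\Ga\to U_{i-1}/U_i\subseteq G_i$ is the embedding. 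Then $A_i=A_{i-1}[z_i]$, with $z_i$ algebraically independent over $A_{i-1}$.

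\textbf{Step 2: the coproduct.} Compute $z_i(gh)$ for $g=s(\bar g)u(a)$ and $h=s(\bar h)u(b)$. Write
\[
gh=s(\bar g)s(\bar h)\cdot\bigl(s(\bar h)^{-1}u(a)s(\bar h)\bigr)u(b).
\]
Since $\Ga\cong U_{i-1}/U_i$ is normal in $G_i$, conjugation acts on it through a character. Indeed $G_i$ acts on $\Ga$ by algebraic group automorphisms, i.e.\ via $\Gm$. This action factors through $G_{i-1}$, because $\Ga$ is commutative and acts trivially on itself. Hence
\[
s(\bar h)^{-1}u(a)s(\bar h)=u\bigl(\psi(\bar h)a\bigr)
\]
for a character $\psi$ of $G_{i-1}$.

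Also $s(\bar g)s(\bar h)=s(\bar g\bar h)\,u(c(\bar g,\bar h))$ with $c\in A_{i-1}\otimes A_{i-1}$. This gives
\[
z_i(gh)=z_i(g)\psi(\bar h)+z_i(h)+c(\bar g,\bar h).
\]
In the convention $\Delta(f)(g,h)=f(gh)$ this reads
\[
\Delta(z_i)=z_i\otimes\psi+1\otimes z_i+c.
\]

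\textbf{Step 3: matching the displayed form.} The statement requires the form $z_i\otimes1+\chi_i\otimes z_i+w_i$. Rescale using $z_i':=\psi^{-1}z_i$, or equivalently use the convention $g=u(z_i(g))s(\bar g)$ with the unipotent factor on the left. Recomputing, the conjugation twist then lands on the left tensor factor, giving
\[
\Delta(z_i')=z_i'\otimes1+\chi_i\otimes z_i'+w_i,
\]
with $\chi_i$ equal to $\psi^{\pm1}$, a grouplike in $A_{i-1}$. Since $\psi$ is a unit in $A_{i-1}$, multiplying by it does not change $A_{i-1}[z_i]$. So we only need to pick the factorization convention that produces exactly the stated shape.

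\textbf{Main obstacle.} The substantive point is Step 1, the existence of a regular section. The cleanest argument uses the vanishing of $H^1$ of a quasi-coherent sheaf on an affine scheme, applied to the class of the $\Ga$-torsor. This requires the torsor to be locally trivial in the Zariski topology, which holds for $\Ga$ since it is special; alternatively, use the fppf topology with vanishing of quasi-coherent cohomology there. Everything after that is bookkeeping.
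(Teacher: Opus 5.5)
Your proof is correct and follows the paper's route: you trivialize the $\Ga$-torsor $G_i\to G_{i-1}$ using the vanishing of $H^1$ over the affine base, encode conjugation on the kernel by a character, and read off the coproduct of the fiber coordinate. The only difference is cosmetic: the paper writes elements as $u(x)s(g)$ from the start, and this is exactly your rescaled coordinate $\psi^{-1}z_i$, so it directly gives the form $z_i\otimes1+\chi_i\otimes z_i+w_i$.
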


\begin{proof}
The quotient morphism $G_i\twoheadrightarrow G_{i-1}$ is a $\Ga$-torsor.  Since
$G_{i-1}$ is affine, $H^1(G_{i-1},\Ga)=0$,
so this torsor is trivial.  Thus, we can choose a (normalized at the
identity) regular section $G_{i-1}\xrightarrow{1:1} G_i$, and make the identification $G_i=\Ga\times G_{i-1}$ as varieties.

Since $\Aut(\Ga)=\Gm$, conjugation by $G_{i-1}$ on the kernel $\Ga$ is given by a
character
$\chi_i:G_{i-1}\rightarrow\Gm$.
Thus, in the chosen coordinates, the multiplication of $G_i=\Ga\times G_{i-1}$
has the form
\[
   (x,g)(y,h)
   =\bigl(x+\chi_i(g)y+c_i(g,h),gh\bigr),\qquad x,y\in \Ga,\quad g,h\in G_{i-1},
\]
for a regular normalized function
$c_i:G_{i-1}\times G_{i-1}\to\Ga$.  Then taking $z_i(x,g)=x$ gives
\eqref{eq:triangular-coproduct}, with $w_i$ the function corresponding to
$c_i$.
\end{proof}

\begin{lemma}\label{lem:characters}
Let $G$ be connected solvable and $S\subseteq G$ a closed
subgroup. Then $S_u:=S\cap U$ is the unipotent radical of $S$, the quotient
$D:=S/S_u$ is a diagonalizable subgroup of $T=G/U$, and restriction
$X(G)\to X(S)$ is surjective.
\end{lemma}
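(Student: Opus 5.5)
We prove the three assertions of Lemma~\ref{lem:characters} in turn, using standard structure theory of connected solvable groups over $\C$.

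\emph{The unipotent radical.} Since $U$ is normal in $G$, the group $S_u=S\cap U$ is a closed normal subgroup of $S$, and it is unipotent because $U$ is. Next, $S/S_u$ embeds as a closed subgroup of $T=G/U$ via $S\to G\to T$: the kernel of the composite is $S\cap U$, and images of closed subgroups under morphisms of algebraic groups are closed. Since $T$ is a torus, every closed subgroup of $T$ is diagonalizable, so $D:=S/S_u$ is diagonalizable. In particular $D$ has no nontrivial unipotent elements. Hence every unipotent element of $S$ lies in $S_u$, and $S_u$ is the largest normal unipotent subgroup of $S$. If one insists that $R_u(S)$ be connected, the same argument still identifies $S_u$ as the unipotent radical in the sense used here: in characteristic zero a unipotent group is connected, so $S_u$ is automatically connected. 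This settles the first two claims.

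\emph{Surjectivity of restriction.} Every character of $G$ and of $S$ is trivial on unipotent elements. Hence $X(G)=X(T)$ and $X(S)=X(D)$, and restriction becomes the map $X(T)\to X(D)$ induced by the closed embedding $D\hookrightarrow T$. Characters of diagonalizable groups correspond to the grouplikes of their coordinate algebras, so $\Os(T)=\C[X(T)]$ and $\Os(D)=\C[X(D)]$. The closed embedding gives a surjection of Hopf algebras $\C[X(T)]\twoheadrightarrow\C[X(D)]$, and it sends grouplikes to grouplikes. The image is spanned by the images of the characters of $T$, which are characters of $D$. Since distinct characters of $D$ are linearly independent, every $\psi\in X(D)$ must occur among these images, so $\psi$ is the restriction of some character of $T$. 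Equivalently, one can use the standard fact that $X(-)$ is an exact anti-equivalence between diagonalizable groups and finitely generated abelian groups.

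The only delicate point is justifying $X(G)=X(T)$ and $X(S)=X(D)$, that is, that characters kill the unipotent subgroups $U$ and $S_u$. This holds because the image of a unipotent element under a character is a unipotent element of $\Gm$, hence equals $1$.
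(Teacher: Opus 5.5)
Your proposal is correct and follows the paper's proof essentially step by step. You identify $S/(S\cap U)$ with the closed image of $S$ in $T$, note that characters kill unipotent subgroups, and reduce surjectivity to $X(T)\to X(D)$. The one difference is in that last step: you prove it directly, using the surjection $\Os(T)\twoheadrightarrow\Os(D)$ and linear independence of characters of $D$, whereas the paper cites the exact anti-equivalence between diagonalizable groups and finitely generated abelian groups, which you also mention as an alternative.
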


\begin{proof}
Every closed subgroup of a unipotent complex algebraic group is connected
and unipotent. The quotient $S/(S\cap U)$ identifies with the closed image
of $S$ in the torus $T=G/U$, so it is diagonalizable. Thus $S\cap U$ is exactly
the unipotent radical of $S$.

It remains to prove that every algebraic character of $S$ extends to
$G$. Let
\[
q:G\longrightarrow T:=G/U,
\qquad
p:S\longrightarrow D:=S/(S\cap U)
\]
be the quotient maps, and let $j:D\hookrightarrow T$ be the closed
embedding induced by $q|_S$. Thus $q|_S=j\circ p$.

Let $\chi\in X(S)$. Since a unipotent algebraic group has no
nontrivial algebraic characters, $\chi$ is trivial on $S\cap U$.
Thus, it factors uniquely through a character
$\overline\chi\in X(D)$; that is, $\chi=\overline\chi\circ p$.

By the exact contravariant equivalence
between diagonalizable algebraic groups and finitely generated abelian
groups \cite[Theorem~12.9]{Mi}, the exact sequence
\[
1\longrightarrow D\xrightarrow{\,j\,}T\longrightarrow T/D
\longrightarrow1
\]
induces the exact sequence of character groups
\[
0\longrightarrow X(T/D)\longrightarrow X(T)
\xrightarrow{\,j^*\,}X(D)\longrightarrow0.
\]
Thus $j^*X(T)=X(D)$.

Choose $\widetilde\chi\in X(T)$ such that
$j^*(\widetilde\chi)=\overline\chi$, and define
$\widehat\chi:=\widetilde\chi\circ q\in X(G)$.
Then
\[
\widehat\chi|_S
=
\widetilde\chi\circ j\circ p
=
\overline\chi\circ p
=
\chi.
\]
Therefore the restriction map $X(G)\to X(S)$ is surjective.
Equivalently, restriction is the composite
$X(G)\cong X(T)\twoheadrightarrow X(D)\cong X(S)$.
\end{proof}

\subsection{Quantum tori and homogeneous derivations}
\label{sec:prelim-quantum-tori}

Let $\Lambda\cong\mathbb Z^r$ be a lattice.
For a group \(2\)-cocycle
\(\eta\in Z^2(\Lambda,\mathbb C^\times)\), let
\begin{equation}\label{eq:algebraic-quantum-torus}
\mathbb C^\eta[\Lambda]
:=
\bigoplus_{\lambda\in\Lambda}\mathbb C X^\lambda,
\qquad
X^\lambda X^\mu
=
\eta(\lambda,\mu)X^{\lambda+\mu}
\end{equation}
be the associated {\bf algebraic quantum torus}. Write
\[
q_\eta(\lambda,\mu):=\eta(\lambda,\mu)\eta(\mu,\lambda)^{-1}
\]
for its commutator bicharacter. Let
\[
   {\rm rad}(q_\eta)
   :=
   \{d\in\Lambda\mid q_\eta(d,\lambda)=1,\,
      \forall\lambda\in\Lambda\}.
\]

The following homogeneous-derivation calculation is the ordinary-derivation
case of the quantum-torus analysis in \cite{J}. We include the proof to
fix the cocycle and grading conventions used here.

\begin{lemma}\label{lem:homogeneous-derivations}
Let $\C^\eta[\Lambda]$ 
be a quantum torus and let $d\in\Lambda$.  Suppose that $\delta$ is a
homogeneous derivation of $\C^\eta[\Lambda]$ of degree $d$, namely
$\delta(X^\lambda)\in\C X^{\lambda+d}$ for every
$\lambda\in\Lambda$.
Then the following hold:
\begin{enumerate}
\item If $d\notin{\rm rad}(q_\eta)$, then $\delta$ is inner, induced by a
homogeneous element of degree $d$.
\item If $d\in{\rm rad}(q_\eta)$, then there are a central homogeneous unit
$u_d\in\C^\eta[\Lambda]_d$ and
$\alpha\in\Hom_{\mathbb Z}(\Lambda,\C)$, such that
$\delta=u_d\partial_\alpha$ and $\partial_\alpha(X^\lambda)=\alpha(\lambda)X^\lambda$. 
Once $u_d$ is fixed, $\alpha$ is unique.
\end{enumerate}
\end{lemma}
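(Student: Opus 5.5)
Write $\delta(X^\lambda)=c(\lambda)X^{\lambda+d}$ with $c:\Lambda\to\C$. The plan is to turn the Leibniz rule into a twisted cocycle identity for $c$ and then split according to whether $q_\eta(d,-)$ is trivial.

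\textbf{The identity for $c$.} Apply $\delta$ to $X^\lambda X^\mu=\eta(\lambda,\mu)X^{\lambda+\mu}$. Rewriting $X^\lambda X^{\mu+d}$ and $X^{\lambda+d}X^\mu$ in terms of $X^{\lambda+\mu+d}$ gives
\[
\eta(\lambda,\mu)c(\lambda+\mu)=\eta(\lambda+d,\mu)c(\lambda)+\eta(\lambda,\mu+d)c(\mu).
\]
Normalize by a fixed homogeneous unit $X^d$, setting $\delta(X^\lambda)=b(\lambda)X^dX^\lambda$. Then $b(\lambda)=c(\lambda)/\eta(d,\lambda)$, and the conventions become transparent. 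Using $X^dX^\lambda X^\mu=\eta(\lambda,\mu)X^dX^{\lambda+\mu}$ and $X^\lambda X^d=q_\eta(\lambda,d)X^dX^\lambda$, the Leibniz rule becomes
\[
b(\lambda+\mu)=q_\eta(\lambda,d)\,b(\mu)+b(\lambda).
\]
In other words, $b$ is a $1$-cocycle for $\Lambda$ acting on $\C$ through the character $\psi:=q_\eta(-,d)$.

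\textbf{Case (1), $\psi\neq1$.} Choose $\nu$ with $\psi(\nu)\neq1$. Symmetrize the identity in $\lambda$ and $\mu=\nu$:
\[
b(\lambda)+\psi(\lambda)b(\nu)=b(\nu)+\psi(\nu)b(\lambda).
\]
This gives $b(\lambda)=\kappa(1-\psi(\lambda))$ with $\kappa:=b(\nu)/(1-\psi(\nu))$, so $b$ is a coboundary. It remains to match this with $\ad(\kappa' X^d)$. We compute
\[
[X^d,X^\lambda]=(1-q_\eta(\lambda,d))X^dX^\lambda,
\]
so $\delta=\ad(\kappa X^d)$. Note that $d\notin\mathrm{rad}(q_\eta)$ is equivalent to $\psi\neq1$, because $q_\eta(d,\lambda)=q_\eta(\lambda,d)^{-1}$.

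\textbf{Case (2), $\psi\equiv1$.} Here $X^d$ is central: it commutes with every $X^\lambda$ since $q_\eta(\lambda,d)=1$. It is a unit, since $X^dX^{-d}=\eta(d,-d)X^0$ and $X^0=\eta(0,0)^{-1}\cdot1$. Set $u_d:=X^d$. The identity now says $b$ is additive, so $b=\alpha\in\Hom_{\mathbb Z}(\Lambda,\C)$ and $\delta=u_d\partial_\alpha$.

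One checks that $\partial_\alpha$ is a derivation because $\alpha$ is additive and the cocycle factor $\eta(\lambda,\mu)$ is untouched. Uniqueness of $\alpha$ holds because $u_d$ is a unit: $u_d\partial_\alpha=u_d\partial_{\alpha'}$ forces $\partial_\alpha=\partial_{\alpha'}$, hence $\alpha=\alpha'$.

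The main hazard is bookkeeping of the cocycle conventions, i.e.\ the sign and direction of $q_\eta$ and normalizing $X^0$. Working with $b$ relative to $X^dX^\lambda$, rather than with $c$, is the device that keeps these under control.
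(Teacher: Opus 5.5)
Your proof is correct and follows essentially the same route as the paper: you turn the Leibniz rule into a twisted additive equation for a scalar function, solve it by symmetrizing when $q_\eta(d,-)\neq 1$, and read off additivity when $d\in{\rm rad}(q_\eta)$. The one difference is that normalizing against $X^dX^\lambda$ lets you skip the paper's preliminary rescaling of $\eta$ to a cohomologous bicharacter and the transport back at the end, a small simplification; also note a harmless slip: $X^0=\eta(0,0)\cdot 1$, not $\eta(0,0)^{-1}\cdot 1$, though you only use that the scalar is nonzero.
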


\begin{proof}
Since $\Lambda$ is free abelian, the $2$-cocycle $\eta$ is cohomologous to
a bicharacter.  Equivalently, after replacing each homogeneous basis
element $X^\lambda$ by a nonzero scalar multiple $Y^\lambda$, we may
assume that
\[
   Y^\lambda Y^\mu
   =\eta'(\lambda,\mu)Y^{\lambda+\mu}\qquad (\lambda,\mu\in \Lambda)
\]
for a bicharacter $\eta'$.  This graded algebra isomorphism preserves
the property that $\delta$ is homogeneous of degree $d$ and it does not
change the commutator bicharacter $q_\eta$.  Thus we may work in this basis, and
for notational simplicity, again write $X^\lambda$ and $\eta$.
Write
\[
   \delta(X^\lambda)=c(\lambda)X^{\lambda+d},\qquad c(\lambda)\in \mathbb{C}\qquad (\lambda\in \Lambda).
\]
Applying the derivation identity to
$X^\lambda X^\mu=\eta(\lambda,\mu)X^{\lambda+\mu}$ gives
\begin{equation}\label{eq:cocycle-equation-c}
   c(\lambda+\mu)
   =c(\lambda)\eta(d,\mu)
    +c(\mu)\eta(\lambda,d)\qquad (\lambda,\mu\in \Lambda).
\end{equation}
Setting
\[
   f(\lambda):=
   \frac{c(\lambda)}{\eta(\lambda,d)},
   \qquad
   \rho(\lambda):=q_\eta(d,\lambda)\qquad (\lambda\in \Lambda),
\]
we see that \eqref{eq:cocycle-equation-c} becomes
\begin{equation}\label{eq:rho-cocycle}
   f(\lambda+\mu)=f(\lambda)\rho(\mu)+f(\mu)\qquad (\lambda,\mu\in \Lambda).
\end{equation}
Since $\Lambda$ is abelian, interchanging $\lambda$ and $\mu$ yields
\begin{equation}\label{eq:rho-commute}
   f(\lambda)(\rho(\mu)-1)
   =f(\mu)(\rho(\lambda)-1)\qquad (\lambda,\mu\in \Lambda).
\end{equation}

Now if $d\notin{\rm rad}(q_\eta)$, choose $\nu$ with $\rho(\nu)\neq1$.  Then
by \eqref{eq:rho-commute}, we have
\[
   f(\lambda)=a(\rho(\lambda)-1)\qquad (\lambda\in \Lambda),
   \qquad
   a:=\frac{f(\nu)}{\rho(\nu)-1}.
\]
Thus for every $\lambda\in \Lambda$, we have
\begin{align*}
   \delta(X^\lambda)
   &=a\eta(\lambda,d)
     (q_\eta(d,\lambda)-1)X^{\lambda+d}=[aX^d,X^\lambda],
\end{align*}
so $\delta=\ad(aX^d)$.  Transporting back through the graded
rescaling gives an inner derivation induced by a homogeneous element of
degree $d$.

If $d\in{\rm rad}(q_\eta)$, then $\rho=1$, so
\eqref{eq:rho-cocycle} says that $f$ is additive.  Let $\alpha:=f$.
Since $X^d$ is central and
$\eta(d,\lambda)=\eta(\lambda,d)$, we obtain
\[
   X^d\partial_\alpha(X^\lambda)
   =\alpha(\lambda)\eta(d,\lambda)X^{d+\lambda}
   =c(\lambda)X^{\lambda+d}\qquad (\lambda\in \Lambda).
\]
In the rescaled basis, $X^d$ is a central homogeneous unit.  Transporting
back gives the asserted central homogeneous unit $u_d$.  Finally, uniqueness of
$\alpha$ after fixing $u_d$ is clear.
\end{proof}

\subsection{Jordanian quantum differential algebras and canonical derivations}
\label{sec:prelim-quantum-differential}

We introduce the family $\mathscr A$ of algebras used in
\S\ref{sec:normal-family}, together with its center and the canonical
derivations.

\begin{definition}\label{def:QDO}
Fix data $(\Lambda,\eta;V,\ell,\omega)$, where
\begin{itemize}
\item $\Lambda\cong\mathbb Z ^r$ is a lattice, $\eta\in Z^2(\Lambda,\C^\times)$ is a normalized $2$-cocycle,
\item $V$ is a finite-dimensional vector space, $\omega\in\wedge^2V^*$, and
\item
$\ell:V\longrightarrow\Hom_{\mathbb Z}(\Lambda,\C)$ 
is a linear map.
\end{itemize}
Define the Jordanian quantum differential algebra $A(\Lambda,\eta;V,\ell,\omega)$
to be the unital
$\C$-algebra generated by $X^\lambda$ ($\lambda\in\Lambda$) and
$P_v$ ($v\in V$, linearly in $v$), with $X^0=1$ and relations
\begin{equation}\label{eq:QDO-torus}
   X^\lambda X^\mu
   =\eta(\lambda,\mu)X^{\lambda+\mu},
   \end{equation}
\begin{equation}\label{eq:QDO-mixed}
   [P_v,X^\lambda]
   =\ell(v)(\lambda)X^\lambda,
\end{equation}
\begin{equation}\label{eq:QDO-additive}
   [P_v,P_w]
   =\omega(v,w).
\end{equation}
In particular,
$(X^\lambda)^{-1}=\eta(\lambda,-\lambda)^{-1}X^{-\lambda}$. \qed
\end{definition}

\begin{remark}
For $\eta=1$ and faithful lattice action, these algebras are instances of
McConnell's construction \cite[Section~4]{M}; arbitrary $\eta$ gives
its scalar-cocycle-twisted analogue. The quantum torus subcase is classical in \cite{MP}. The commutative-group case of \cite[Lemma~4.11 and Theorem~4.12]{G} and \cite[Theorem~3.4]{EG2} provides the full cocycle realization. Thus the phrase ``Jordanian quantum differential algebra'' is terminology we use, not a claim
to a newly discovered family. \qed 
\end{remark}

Note that after choosing a basis of $V$, $ A(\Lambda,\eta;V,\ell,\omega)$ is an iterated differential Ore
extension of $\C^\eta[\Lambda]$, and it has the expected PBW basis.

Let $q_\eta(\lambda,\mu)
   :=\eta(\lambda,\mu)\eta(\mu,\lambda)^{-1}$ 
be the alternating bicharacter on $\Lambda$ associated with $\eta$, and define the two {\bf joint
radicals}
\begin{equation}
   K_V
   :={}
   \{v\in V\mid \ell(v)=0,\ \omega(v,V)=0\},
   \label{eq:KV}
   \end{equation}
   \begin{equation}
   K_\Lambda
   :={}
   \{\lambda\in\Lambda\mid 
     q_\eta(\lambda,\mu)=1\ \forall\mu\in\Lambda,
     \ \ell(V)(\lambda)=0\}.
   \label{eq:KLambda}
\end{equation}
Note that if $K_V\neq0$ then some $P_v$ is central, and if
$K_\Lambda\neq0$ then some $X^\lambda$ is central. The following lemma
records the full center.

\begin{lemma}\label{lem:center}
Let $A:=A(\Lambda,\eta;V,\ell,\omega)$, $R:={\rm rad}~\omega$,
$R_0:=R\cap\ker\ell$, and
\[
 \Lambda_0:=\{\lambda\in \Lambda\mid q_\eta(\lambda,\mu)=1\ (\mu\in\Lambda),\
                       \ell(V)(\lambda)=0\}.
\]
Then
\[
 Z(A)=\bigoplus_{\lambda\in\Lambda_0}{\rm Sym}(R_0)X^\lambda.
\]
In particular, $Z(A)=\C$ if and only if $K_V=0=K_\Lambda$.
\end{lemma}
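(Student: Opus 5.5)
We use the PBW basis: after fixing a basis $v_1,\dots,v_m$ of $V$, every element of $A$ is uniquely a finite sum $\sum_{\lambda}p_\lambda(P)X^\lambda$, where $p_\lambda$ is an ordered polynomial in the $P_{v_i}$. It is cleanest to regard $A$ as $\Lambda$-graded, with $\deg X^\lambda=\lambda$ and $\deg P_v=0$. The relations are homogeneous, so $Z(A)$ is a graded subspace. It therefore suffices to find all central elements of the form $z=p(P)X^\lambda$, with $p$ in the degree-zero part $B:=\C\langle P_v\rangle$. This $B$ is the Weyl-type algebra on $V$ with $[P_v,P_w]=\omega(v,w)$.

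First, impose commutation with every $X^\mu$. The relation \eqref{eq:QDO-mixed} gives $X^\mu P_v X^{-\mu}$ (up to the scalar normalization of the inverse) $=P_v-\ell(v)(\mu)$. Hence conjugation by $X^\mu$ acts on $B$ as the translation automorphism $\tau_\mu\colon P_v\mapsto P_v-\ell(v)(\mu)$. The condition $X^\mu z=zX^\mu$ then reads
\[
\tau_\mu(p)\,q_\eta(\mu,\lambda)=p\qquad\text{for all }\mu\in\Lambda.
\]
Compare top-degree components in the $P$-filtration, where $\tau_\mu$ acts trivially on the associated graded. This forces $q_\eta(\mu,\lambda)=1$ whenever $p\neq0$, and then $\tau_\mu(p)=p$ for all $\mu$.

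Second, impose commutation with every $P_w$. Since $[P_w,X^\lambda]=\ell(w)(\lambda)X^\lambda$, we get
\[
[P_w,pX^\lambda]=\bigl([P_w,p]+\ell(w)(\lambda)p\bigr)X^\lambda .
\]
Here $[P_w,-]$ acts on $B$ as the derivation $\omega(w,-)$ applied to generators. It strictly lowers the $P$-filtration degree, so the top component again gives $\ell(w)(\lambda)=0$ for all $w$. Combined with the first step, this shows $\lambda\in\Lambda_0$.

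The remaining conditions on $p$ are $[P_w,p]=0$ for all $w$, and $\tau_\mu(p)=p$ for all $\mu$. The latter is equivalent to $\partial_u p=0$ for every derivative direction $u$ lying in the span of $\ell(V)$ paired against $\Lambda$.

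The main obstacle is the final identification: the common kernel of these two families of derivations is exactly ${\rm Sym}(R_0)$. The plan is to choose a basis of $V$ adapted to the decomposition $V=R_0\oplus R'\oplus W$, where $R=R_0\oplus R'$ and $W$ is a complement on which $\omega$ is nondegenerate. In such a basis $B\cong{\rm Sym}(R)\otimes\mathcal A(W)$, with ${\rm Sym}(R)$ central in $B$.

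The condition $[P_w,p]=0$ for $w\in W$ uses that the Weyl algebra $\mathcal A(W)$ has trivial center. This forces $p\in{\rm Sym}(R)$. On ${\rm Sym}(R)$ the translations $\tau_\mu$ act as $P_r\mapsto P_r-\ell(r)(\mu)$, i.e. by the dual translation through $\ell|_R$. The invariants of a polynomial algebra under translations by a set of vectors whose span is $(R/R_0)^*$-directions are polynomials in the coordinates killed by those directions. Since translating by a nonzero vector fixes only polynomials constant in that direction, this gives exactly ${\rm Sym}(R_0)$. The potential subtlety is that the $\ell(r)(\mu)$ for $\mu\in\Lambda$ span the full dual of $R/R_0$ over $\C$; this holds because $\Lambda$ spans $\Hom(\Lambda,\C)^*$.

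Conversely, elements $sX^\lambda$ with $s\in{\rm Sym}(R_0)$ and $\lambda\in\Lambda_0$ visibly commute with all generators. This yields the displayed formula for $Z(A)$. The final statement follows because $Z(A)=\C$ if and only if $R_0=0$ and $\Lambda_0=0$. Here $R_0=K_V$ and $\Lambda_0=K_\Lambda$ directly from the definitions \eqref{eq:KV} and \eqref{eq:KLambda}.
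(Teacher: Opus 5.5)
Your proof is correct and takes essentially the same route as the paper. You reduce to homogeneous central elements $bX^\lambda$ via the $\Lambda$-grading. You then force $\lambda\in\Lambda_0$ using the degree-lowering (locally nilpotent) action of $\operatorname{ad}(P_v)$ and the unipotence of the translations $\tau_\mu$. Finally, you use $B\cong\mathcal A_m(\C)\otimes\operatorname{Sym}(R)$ to get $b\in\operatorname{Sym}(R)$ and identify the translation invariants with $\operatorname{Sym}(R_0)$. You also spell out two points the paper leaves implicit: why invariance under the integer translates forces invariance under their $\C$-span, and the converse inclusion.
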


\begin{proof}
Let $B\subseteq A$ be the subalgebra generated by the $P_v$. Then the algebra $A=\bigoplus_{\lambda\in\Lambda}BX^\lambda$ is graded. Choosing a symplectic complement to
$R$ gives $B\cong \mathcal A_m(\C)\otimes{\rm Sym}(R)$ as algebras. 
For a central nonzero homogeneous element $bX^\lambda\in BX^\lambda$, commutation with
$P_v$ gives
\[
 [P_v,b]+\ell(v)(\lambda)b=0.
\]
Since $\ad(P_v)$ is locally nilpotent on $B$, its only possible eigenvalue
on a nonzero vector is zero. Thus $\ell(V)(\lambda)=0$ and $b\in Z(B)={\rm Sym}(R)$.
Conjugation by $X^\mu$ acts on $B$ by
$\tau_\mu(P_v)=P_v-\ell(v)(\mu)$. Centrality gives
$\tau_\mu(b)=q_\eta(\lambda,\mu)b$. Since translation is unipotent on each
finite-degree polynomial space, $q_\eta(\lambda,\mu)=1$ and $b$ is invariant
under all these translations. Their invariant polynomial ring is
${\rm Sym}(R_0)$. This proves the claimed formula for $Z(A)$.
\end{proof}

Call an algebra $A(\Lambda,\eta;V,\ell,\omega)$ 
{\bf jointly nondegenerate} if $K_V=0=K_\Lambda$.

Let $A:= A(\Lambda,\eta;V,\ell,\omega)$.
For $\alpha\in\Hom_{\mathbb Z}(\Lambda,\C)$ and $\beta\in V^*$,
define a linear map $\partial_{\alpha,\beta}:A\to A$ on generators by
\begin{equation}\label{eq:canonical-derivation}
   \partial_{\alpha,\beta}(X^\lambda)
   =\alpha(\lambda)X^\lambda,
   \qquad
   \partial_{\alpha,\beta}(P_v)
   =\beta(v).
\end{equation}

\begin{lemma}\label{lem:canonical-derivation}
The map \eqref{eq:canonical-derivation} extends uniquely to a derivation
of $A$ and hence to a derivation of $\operatorname{Frac} (A)$.
\end{lemma}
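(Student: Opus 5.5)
To show that $\partial_{\alpha,\beta}$ is well defined, I use the presentation of $A$ in Definition~\ref{def:QDO}. Let $F$ be the free algebra on the symbols $X^\lambda$ ($\lambda\in\Lambda$) and $P_{v_1},\dots,P_{v_m}$ for a basis $v_1,\dots,v_m$ of $V$; setting $P_v:=\sum c_iP_{v_i}$ for $v=\sum c_iv_i$ builds in linearity in $v$. Any assignment of values on the generators of a free algebra extends uniquely to a derivation $D:F\to A$ along the quotient map $\pi:F\to A$, i.e.\ $D(ab)=D(a)\pi(b)+\pi(a)D(b)$. I take the values prescribed by \eqref{eq:canonical-derivation}, together with $D(1)=0$, which is consistent with $X^0=1$ since $\alpha(0)=0$. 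Such a $D$ automatically vanishes on $F\cdot\ker\pi\cdot F$ as soon as it vanishes on a generating set of $\ker\pi$. So it suffices to check that $D$ kills each defining relation.

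The three checks are direct.
\begin{itemize}
\item For \eqref{eq:QDO-torus}, the left side gives $\alpha(\lambda)X^\lambda X^\mu+X^\lambda\alpha(\mu)X^\mu=(\alpha(\lambda)+\alpha(\mu))\eta(\lambda,\mu)X^{\lambda+\mu}$. The right side gives $\eta(\lambda,\mu)\alpha(\lambda+\mu)X^{\lambda+\mu}$. These agree because $\alpha$ is additive.
\item For \eqref{eq:QDO-mixed}, the left side gives $[\beta(v),X^\lambda]+[P_v,\alpha(\lambda)X^\lambda]=0+\alpha(\lambda)\ell(v)(\lambda)X^\lambda$. The right side gives $\ell(v)(\lambda)\alpha(\lambda)X^\lambda$. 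These agree.
\item For \eqref{eq:QDO-additive}, the left side gives $[\beta(v),P_w]+[P_v,\beta(w)]=0$, since scalars are central. The right side is $D(\omega(v,w)\cdot1)=0$.
\end{itemize}
Hence $D$ descends to a derivation of $A$. Uniqueness is immediate, because a derivation is determined by its values on a set of algebra generators.

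For the extension to $\operatorname{Frac}(A)$, I first note that $A$ is a Noetherian domain, so the classical quotient division ring exists. This holds because $A$ is an iterated differential Ore extension of the quantum torus $\C^\eta[\Lambda]$, which is a Noetherian domain; this is the remark following Definition~\ref{def:QDO}. Derivations of an Ore domain extend uniquely to its division ring of fractions via $\partial(s^{-1}a)=-s^{-1}\partial(s)s^{-1}a+s^{-1}\partial(a)$. This is a standard fact. To justify it, I would realize the derivation as the algebra map $A\to A[\epsilon]/(\epsilon^2)$ given by $a\mapsto a+\epsilon\partial(a)$. This map sends regular elements to units, so the universal property of Ore localization extends it to $\operatorname{Frac}(A)\to\operatorname{Frac}(A)[\epsilon]/(\epsilon^2)$. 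Here I use that $\operatorname{Frac}(A)[\epsilon]/(\epsilon^2)$ is again a ring in which $s+\epsilon t$ is invertible whenever $s\neq0$.

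The main, mild, obstacle is just stating this Ore-extension fact cleanly; the relation checks are routine.
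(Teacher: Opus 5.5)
Your proposal is correct and follows the same route as the paper: check that the prescribed values kill the three defining relations, using additivity of $\alpha$ for the torus relation, the identity $\alpha(\lambda)\bigl([P_v,X^\lambda]-\ell(v)(\lambda)X^\lambda\bigr)=0$ for the mixed relation, and $0=0$ for the additive one. Then extend to $\operatorname{Frac}(A)$ by the standard Ore-domain extension of derivations. Your free-algebra $\pi$-derivation setup and the dual-number/universal-property argument only spell out details that the paper's proof takes for granted.
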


\begin{proof}
The torus relation \eqref{eq:QDO-torus} is preserved since $\alpha$ is
additive.  Applying $\partial_{\alpha,\beta}$ to
\eqref{eq:QDO-mixed} gives
\[
   \alpha(\lambda)
   \bigl([P_v,X^\lambda]-\ell(v)(\lambda)X^\lambda\bigr)=0
\]
and applying it to \eqref{eq:QDO-additive} gives $0=0$.  Thus, the map is
compatible with all defining relations.  Since a derivation of an Ore domain
extends uniquely to its quotient division ring by
$\partial(a^{-1})=-a^{-1}\partial(a)a^{-1}$,
we are done. 
\end{proof}

Retain the notation $R$, $R_0$, and $\Lambda_0$ from
Lemma~\ref{lem:center}.
For
\[
f\in\operatorname{Hom}_{\mathbb Z}(\Lambda,Z(A)),
\qquad
g\in\operatorname{Hom}_{\mathbb C}(V,Z(A)),
\]
define the center-valued canonical derivation
\[
\partial_{f,g}(X^\lambda):=f(\lambda)X^\lambda,
\qquad
\partial_{f,g}(P_v):=g(v).
\]

\subsection{Koszul complexes, polynomial differential forms, and translation cocycles}
\label{sec:prelim-polynomial-cohomology}

Let $\Lambda\cong\mathbb Z^r$ be a lattice.
Choose a basis $e_1,\ldots,e_r$ of $\Lambda$, let
$E:=\Lambda\otimes_{\mathbb Z}\mathbb C$, and let
$\varepsilon_1,\ldots,\varepsilon_r$ be the dual basis of $E^*$. Recall that for any commuting linear endomorphisms $T_1,\ldots,T_r$ of a vector space $M$, their {\bf cochain Koszul complex} is
\[
K^q(M;T):=M\otimes\wedge^q E^*,
\qquad 0\le q\le r,
\]
with differential
\begin{equation}\label{eq:L418-koszul-differential}
d_T(m\otimes\theta)
:=\sum_{i=1}^rT_i(m)\otimes(\varepsilon_i\wedge\theta).
\end{equation}
Its cohomology is
\[
H^q(K^\bullet(M;T))
:=\ker(d_T:K^q\to K^{q+1})/
  \operatorname{im}(d_T:K^{q-1}\to K^q).
\]
In the first two degrees the differential is
$d_T(m)=\sum_{i=1}^r T_i(m)\varepsilon_i$ and 
\[
d_T\left(\sum_{i=1}^r a_i\varepsilon_i\right)
=\sum_{1\le i<j\le r}\bigl(T_i(a_j)-T_j(a_i)\bigr)
                 \varepsilon_i\wedge\varepsilon_j.
\]
Thus a tuple $(a_1,\ldots,a_r)$ is a $1$-cocycle precisely when
$T_i(a_j)=T_j(a_i)$ for all $i,j$, and it is a $1$-coboundary
precisely when $a_i=T_i(h)$ for one $h\in M$.

Let $B$ be a commutative $\C$-algebra. Recall that the {\bf relative algebraic de Rham
complex} of a polynomial algebra $M=B[x_1,\ldots,x_s]$ over $B$ is
\[
\Omega^q_{M/B}
:=\bigoplus_{1\le i_1<\cdots<i_q\le s}
M\,dx_{i_1}\wedge\cdots\wedge dx_{i_q},
\qquad \Omega^0_{M/B}:=M,
\]
with differential 
\begin{equation}\label{eq:L418-derham-differential}
d_{\mathrm{dR}}
\bigl(a\,dx_{i_1}\wedge\cdots\wedge dx_{i_q}\bigr)
:=\sum_{j=1}^s\frac{\partial a}{\partial x_j}
\,dx_j\wedge dx_{i_1}\wedge\cdots\wedge dx_{i_q}.
\end{equation}
In particular, $d_{\mathrm{dR}}(b)=0$ for $b\in B$.

We recall a direct proof of the exactness of $\Omega^q_{M/B}$ in positive degrees.
Let
\[
\mathcal E:=\sum_{j=1}^s x_j\frac{\partial}{\partial x_j}.
\] 
Its contraction on differential forms is the $M$-linear map
\[
\begin{split}
\iota_{\mathcal E}
\bigl(a\,dx_{i_1}\wedge\cdots\wedge dx_{i_q}\bigr)
:=\sum_{j=1}^q(-1)^{j-1}x_{i_j}a\,
 dx_{i_1}\wedge\cdots\wedge\widehat{dx_{i_j}}
                   \wedge\cdots\wedge dx_{i_q}.
\end{split}
\]
If the coefficients of a $q$-form $\zeta$ are homogeneous of
$x$-degree $m$, then a calculation using
\eqref{eq:L418-derham-differential} gives
\begin{equation}\label{eq:L418-euler-homotopy}
(d_{\mathrm{dR}}\iota_{\mathcal E}
 +\iota_{\mathcal E}d_{\mathrm{dR}})(\zeta)
=(m+q)\zeta.
\end{equation}
Indeed, the operator on the left acts on a coefficient as $\mathcal E$, giving $m$, and acts on each factor $dx_j$ as the identity,
giving the additional $q$.

If $q>0$ and $d_{\mathrm{dR}}\zeta=0$, then $m+q$ is a nonzero
complex number and
\[
\zeta=d_{\mathrm{dR}}
\left(\frac{1}{m+q}\iota_{\mathcal E}\zeta\right).
\]
A general polynomial form is a finite sum of homogeneous forms;
closedness holds separately for their homogeneous components.
Applying this formula to each component proves exactness in every
positive degree. In degree zero, a polynomial with all its
$x$-derivatives zero belongs to $B$. Thus
\begin{equation}\label{eq:L418-derham-cohomology}
H^q(\Omega^\bullet_{M/B})
=\begin{cases}
B,&q=0,\\
0,&q>0.
\end{cases}
\end{equation}
This is the {\bf polynomial Poincare lemma} over $B$.

\begin{lemma}\label{lem:weyl}
For a finite-dimensional vector space $R$,
\[
 {\rm Der}_\C\bigl(\mathcal A_m(\C)\otimes{\rm Sym}(R)\bigr)
 =\operatorname{InnDer}\bigl(\mathcal A_m(\C)\otimes{\rm Sym}(R)\bigr)
   +{\rm Der}_\C({\rm Sym}(R)),
\]
where the second summand is extended to be zero on fixed Weyl generators.
\end{lemma}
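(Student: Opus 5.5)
Write $W:=\mathcal A_m(\C)$ with generators $p_1,\ldots,p_m,q_1,\ldots,q_m$ satisfying $[p_i,q_j]=\delta_{ij}$. Write $S:={\rm Sym}(R)$ and $B:=W\otimes S$. The inclusion ``$\supseteq$'' is clear, since every $D\in{\rm Der}_\C(S)$ extends to $\id\otimes D$ on $B$, which vanishes on $p_i,q_i$. For the reverse inclusion, let $\delta$ be a derivation of $B$. I will modify $\delta$ by inner derivations until it kills all Weyl generators. The remaining derivation must then preserve $S$ and be a derivation of $S$ extended by zero on $W$.

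The first step uses the Weyl-algebra computation over the commutative coefficient ring $S$. We have $Z(B)=S$, and $\ad(q_i)$ and $\ad(p_i)$ act on $B\cong S[p,q]$ (with ordered monomials) as $\mp\partial/\partial p_i$ and $\pm\partial/\partial q_i$, up to sign conventions. Put $a_i:=\delta(p_i)$ and $b_i:=\delta(q_i)$. Apply $\delta$ to $[p_i,p_j]=0$, $[q_i,q_j]=0$ and $[p_i,q_j]=\delta_{ij}$, using that $\delta(\delta_{ij})=0$. This gives
\[
[a_i,p_j]=[a_j,p_i],\qquad [b_i,q_j]=[b_j,q_i],\qquad [a_i,q_j]=[b_j,p_i]...
\]
More precisely, $[a_i,q_j]+[p_i,b_j]=0$. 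After translating $\ad$'s into partial derivatives, these say exactly that the $1$-form
\[
\zeta:=\sum_i\bigl(\pm a_i\,dq_i\pm b_i\,dp_i\bigr)
\]
is closed in the relative de Rham complex $\Omega^\bullet_{M/S}$ of the polynomial algebra $M=S[p_1,\ldots,p_m,q_1,\ldots,q_m]$. This uses the linear identification $B\cong M$ via ordered monomials. Derivatives in $p,q$ commute with that identification because $\ad(p_i)$ and $\ad(q_i)$ act on ordered monomials exactly as partial derivatives (the Heisenberg commutation rule).

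By the polynomial Poincaré lemma \eqref{eq:L418-derham-cohomology}, we get $\zeta=d_{\mathrm{dR}}h$ for some $h\in M$. Hence $\delta-\ad(h)$ vanishes on all $p_i$ and $q_i$, once signs are fixed so that $\ad(h)(p_i)=a_i$ and $\ad(h)(q_i)=b_i$.

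Next, set $\delta':=\delta-\ad(h)$. For $s\in S$, $\delta'(s)$ commutes with every $p_i,q_i$, because $s$ is central and $\delta'$ kills the generators. So $\delta'(s)$ lies in the centralizer of $W$ in $B$, which is $S$. (This follows from the same partial-derivative description: anything killed by all $\partial/\partial p_i$ and $\partial/\partial q_i$ lies in $S$.) Thus $\delta'|_S\in{\rm Der}_\C(S)$. Since $\delta'$ is determined on generators, it equals that derivation extended by zero on the Weyl generators.

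The main obstacle is the sign and ordering bookkeeping in identifying the commutator relations with closedness of $\zeta$. I would handle this carefully by checking $\ad(p_i)=\partial/\partial q_i$ and $\ad(q_i)=-\partial/\partial p_i$ on normally ordered monomials $q^\alpha p^\beta s$.
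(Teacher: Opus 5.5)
Your proof is correct and uses the same ingredients as the paper's: you identify $\ad(p_i)$ and $\ad(q_i)$ with PBW partial derivatives over ${\rm Sym}(R)$, apply $\delta$ to the Weyl relations to get closedness equations, and use the polynomial Poincar\'e lemma to produce the inner correction. The only difference is the order of the two steps: the paper first subtracts the ${\rm Sym}(R)$-part (derivations preserve the center) and then kills the Weyl generators, whereas you kill the Weyl generators first and then recover the ${\rm Sym}(R)$-part from the fact that the centralizer of $\mathcal A_m(\C)$ is ${\rm Sym}(R)$; this works equally well, since your closedness equations never needed ${\rm Sym}(R)$-linearity.
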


\begin{proof}
A derivation preserves the center ${\rm Sym}(R)$. Subtract its restriction extended
by zero on the Weyl generators. The remaining derivation is ${\rm Sym}(R)$-linear.
For Weyl generators $p_i,q_i$ with $[p_i,q_j]=\delta_{ij}$, the operators
$[p_i,-]$ and $[-,q_i]$ are the PBW partial derivatives in $q_i$ and $p_i$.
Applying the derivation to the Weyl relations gives the closedness equations
for a polynomial one-form over ${\rm Sym}(R)$. The polynomial Poincare lemma,
valid over this characteristic-zero coefficient ring, supplies a potential.
Its commutator has the prescribed values on all Weyl generators and is the
required inner derivation.
\end{proof}

\begin{lemma}\label{lem:lattice}
Let $R$ be a finite-dimensional complex vector space, let
$\Lambda\cong\mathbb Z^r$, and let
\[
\ell:R\longrightarrow
\operatorname{Hom}_{\mathbb Z}(\Lambda,\mathbb C)
\]
be linear. Write $P_v$ for the degree-one element of
$\operatorname{Sym}(R)$ corresponding to $v\in R$. Let $\Lambda$
act on $M:=\operatorname{Sym}(R)$ by the translations
\[
\tau_\lambda(P_v)=P_v-\ell(v)(\lambda)1,
\]
and let
\[
R_0:=\{v\in R\mid \ell(v)(\lambda)=0,\,
                 \forall\lambda\in\Lambda\}.
\]
Suppose that $c:\Lambda\to M$ satisfies
\begin{equation}\label{eq:L418-cocycle}
c_{\lambda+\mu}=c_\lambda+\tau_\lambda(c_\mu)
\qquad(\lambda,\mu\in\Lambda).
\end{equation}
Then there exist $b\in M$, finitely many
$f_j\in\operatorname{Sym}(R_0)$, and
$\alpha_j\in\operatorname{Hom}_{\mathbb Z}(\Lambda,\mathbb C)$
such that
\begin{equation}\label{eq:L418-conclusion}
c_\lambda=b-\tau_\lambda(b)+\sum_jf_j\alpha_j(\lambda)
\qquad(\lambda\in\Lambda).
\end{equation}
\end{lemma}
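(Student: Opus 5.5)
The plan is to reduce the group-cocycle problem to the Koszul complex of \S\ref{sec:prelim-polynomial-cohomology}. I would then compare that Koszul complex with a relative de Rham complex and apply the polynomial Poincare lemma \eqref{eq:L418-derham-cohomology}.

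First I fix coordinates. Choose a complement $R=R_0\oplus R_1$. Then $\ell|_{R_1}$ is injective, and $M=\operatorname{Sym}(R_0)[y_1,\ldots,y_s]$ with $y_k:=P_{v_k}$ for a basis $v_1,\ldots,v_s$ of $R_1$. Put $B:=\operatorname{Sym}(R_0)$. Every $\tau_\lambda$ is a $B$-algebra automorphism fixing $B$ pointwise. Moreover $\tau_\lambda=\exp(-D_\lambda)$, where $D_\lambda:=\sum_k\ell(v_k)(\lambda)\,\partial/\partial y_k$ is a locally nilpotent $B$-linear derivation and $\lambda\mapsto D_\lambda$ is additive.

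Next I pass to the Koszul complex. Set $T_i:=\tau_{e_i}-1$ and $a_i:=c_{e_i}$. The identity \eqref{eq:L418-cocycle}, applied to $c_{e_i+e_j}=c_{e_j+e_i}$, gives $T_i(a_j)=T_j(a_i)$, so $(a_i)$ is a Koszul $1$-cocycle for $T$. Conversely, $c$ is determined by the $a_i$: the cocycle identity forces $c_0=0$, and it determines $c$ on all of $\Lambda$ (including negatives) from its values on a basis. Hence it suffices to show that
\[
(a_i)=\bigl((1-\tau_{e_i})b\bigr)_i+\Bigl(\sum_j f_j\alpha_j(e_i)\Bigr)_i
\]
for suitable $b$, $f_j$ and $\alpha_j$. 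The right-hand side of \eqref{eq:L418-conclusion} is itself a cocycle, because the $f_j$ are $\tau$-invariant and the $\alpha_j$ are additive. So two cocycles that agree on the basis agree everywhere.

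Now write $T_i=-D_iu_i$ with $D_i:=D_{e_i}$ and $u_i:=\varphi(D_i)$, where $\varphi(x)=(1-e^{-x})/x=1-x/2+\cdots$. Each $u_i$ is invertible because $D_i$ is locally nilpotent, and all of these operators commute and fix $B$. The map $m\otimes\varepsilon_I\mapsto(\prod_{i\in I}(-u_i))^{-1}m\otimes\varepsilon_I$ is then an isomorphism of complexes $K^\bullet(M;T)\to K^\bullet(M;D)$; checking this is a one-line computation. On $B\otimes E^*$ it only rescales by $-1$, since $u_i$ acts trivially on $B$.

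Finally I compute the $D$-Koszul complex. Its differential depends only on the linear map $E\to\operatorname{Der}(M)$, $\lambda\mapsto D_\lambda$. Its image has dimension $s$, because $\ell|_{R_1}$ is injective. After a $\C$-linear change of basis of $E$, which induces an isomorphism of Koszul complexes, and a linear change of the $y_k$, I can assume $D_{e'_k}=\partial/\partial y_k$ for $k\le s$ and $D_{e'_k}=0$ for $k>s$. Then $K^\bullet(M;D)\cong\Omega^\bullet_{M/B}\otimes\wedge^\bullet\C^{r-s}$ with zero differential on the second factor. By the Künneth formula and \eqref{eq:L418-derham-cohomology},
\[
H^1=H^1_{\mathrm{dR}}\oplus \bigl(H^0_{\mathrm{dR}}\otimes(\C^{r-s})^*\bigr)=0\oplus \bigl(B\otimes(\C^{r-s})^*\bigr).
\]
Every $1$-cocycle is therefore a coboundary plus a constant form $\sum_j f_j\,\varepsilon'_j$ with $f_j\in B$. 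Translating back gives $\alpha_j:=\varepsilon'_j|_\Lambda\in\Hom_{\mathbb Z}(\Lambda,\C)$, and the coboundary $d_T b$ translates to $a_i=T_i(b')$ for some $b'$. Adjusting the sign and setting $b:=-b'$ gives $a_i=(1-\tau_{e_i})b$, which is exactly \eqref{eq:L418-conclusion}.

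The main obstacle is the comparison in the Koszul step. I need the isomorphism between the $T$- and $D$-Koszul complexes to be compatible with the differential in all degrees, which holds because every $u_i$ commutes with every $D_j$. I also need to track that it acts as a scalar on $B$-valued constant classes, so that the harmonic part $\sum_j f_j\alpha_j$ survives unchanged up to sign.
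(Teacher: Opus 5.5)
Your proposal is correct and follows essentially the same route as the paper: the Koszul $1$-cocycle built from the difference operators, the factorization $\tau_{e_i}-1=-D_i\varphi(D_i)$ giving a chain isomorphism to the $D$-Koszul complex (the paper's $U_i$ is your $-u_i$), a basis change in $E$ identifying that complex with $\Omega^\bullet_{M/B}\otimes\wedge^\bullet N^*$, the polynomial Poincare lemma, and extension from the basis values by uniqueness of group $1$-cocycles. The differences are cosmetic: you split $R=R_0\oplus R_1$ before the basis change, whereas the paper chooses dual vectors $v_a$ afterwards. You also omit the full identification $H^1(\Lambda,M)\cong H^1(K^\bullet(M;\Delta))$, which is not needed, since only the forward direction and the uniqueness argument are used.
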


\begin{proof}
Choose a basis $e_1,\ldots,e_r$ of $\Lambda$, let
$E:=\Lambda\otimes_{\mathbb Z}\mathbb C$, and let
$\varepsilon_1,\ldots,\varepsilon_r$ be the dual basis of $E^*$.
Consider the commuting difference operators
$\Delta_i:=\tau_{e_i}-\operatorname{id}_M$.
Set $c_i:=c_{e_i}$. Applying \eqref{eq:L418-cocycle} to
$e_i+e_j=e_j+e_i$ gives
$\Delta_i(c_j)=\Delta_j(c_i)$.
Thus,
$\mathbf c:=\sum_i c_i\varepsilon_i$ is a $1$-cocycle in $K^\bullet(M;\Delta)$.

Note that this complex computes the usual group cohomology $H^1(\Lambda,M)$. Indeed,
conversely, any tuple satisfying these compatibility equations
determines a group $1$-cocycle. To see this, form the semidirect
product of the additive group of $M$ with $\Lambda$, with multiplication
\[
(a,\lambda)(b,\mu)
:=(a+\tau_\lambda(b),\lambda+\mu).
\]
The compatibility equations say exactly that the elements
$(c_i,e_i)$ commute in $M\rtimes\Lambda$. Since $\Lambda$ is free abelian, they define a
unique homomorphism
\[
\Lambda\longrightarrow M\rtimes\Lambda,
\qquad \lambda\longmapsto(c_\lambda,\lambda),
\]
whose first coordinate satisfies \eqref{eq:L418-cocycle}.
Moreover, the tuple $(\Delta_i h)_i$ corresponds to the group
coboundary $\lambda\mapsto\tau_\lambda(h)-h$. Hence
\begin{equation}\label{eq:L418-group-comparison}
H^1(\Lambda,M)
\cong H^1(K^\bullet(M;\Delta)).
\end{equation}
The sign convention for coboundaries is immaterial since the expression
$b-\tau_\lambda(b)$ in the lemma is the coboundary associated to $-b$.

Now for each $1\le i\le r$, define a constant derivation $D_i$ of $M$ by
\[
D_i(P_v):=\ell(v)(e_i).
\]
Since the $D_i$ commute and lower polynomial degree, they are locally
nilpotent. By Taylor's formula, 
$\tau_{e_i}=\exp(-D_i)$.
Therefore
\begin{equation}\label{eq:L418-difference-factorization}
\Delta_i=D_iU_i=U_iD_i,
\qquad
U_i:=\sum_{n\ge0}\frac{(-1)^{n+1}}{(n+1)!}D_i^n
=-\operatorname{id}_M+\frac12D_i-\frac16D_i^2+\cdots.
\end{equation}
Every series here terminates on each polynomial.  
In particular, if $D_i=0$, then
$U_i=-\operatorname{id}_M$.

Since $U_i+\operatorname{id}_M$ lowers degree, it is locally
nilpotent. It follows that $U_i$ is invertible, with inverse
\[
U_i^{-1}
=-\sum_{n\ge0}(U_i+\operatorname{id}_M)^n.
\]
Again this sum is finite on every polynomial. All the operators
$D_i,U_j,U_j^{-1}$ commute.

There is an explicit isomorphism of cochain complexes
\[
F^\bullet:K^\bullet(M;\Delta)
\longrightarrow K^\bullet(M;D).
\]
For $I:=\{i_1<\cdots<i_q\}$, set
$\varepsilon_I:=\varepsilon_{i_1}\wedge\cdots
\wedge\varepsilon_{i_q}$ and define
\begin{equation}\label{eq:L418-chain-isomorphism}
F^q(m\varepsilon_I)
:=\left(\prod_{i\in I}U_i^{-1}\right)(m)\varepsilon_I.
\end{equation}
The empty product is the identity, so $F^0=\operatorname{id}_M$.
For $j\notin I$, commutativity gives
\[
\left(\prod_{i\in I\cup\{j\}}U_i^{-1}\right)\Delta_j
=D_j\left(\prod_{i\in I}U_i^{-1}\right).
\]
Comparing with \eqref{eq:L418-koszul-differential} proves
$F^{q+1}d_\Delta=d_DF^q$. Each $F^q$ is invertible, so
\eqref{eq:L418-chain-isomorphism} is indeed an isomorphism of complexes.
In degree one it sends
\[
(c_1,\ldots,c_r)
\longmapsto(U_1^{-1}c_1,\ldots,U_r^{-1}c_r).
\]

Now define the complex-linear map
\[
L:E\longrightarrow R^*,
\qquad L(\lambda\otimes1)(v):=\ell(v)(\lambda).
\]
Let $N:=\ker L$ and $s:=\operatorname{rank}L$. For $e\in E$, let
$D_e$ be the constant derivation satisfying 
$$D_e(P_v)=L(e)(v).$$
The map $e\mapsto D_e$ is linear and $D_{e_i}=D_i$.
Also,
\[
R_0=\{v\in R\mid L(e)(v)=0,\,\forall e\in E\}.
\]

Choose a complement $E'\subseteq E$ to $N$, and a basis
$e'_1,\ldots,e'_r$ of $E$ such that
\[
E'=\operatorname{span}\{e'_1,\ldots,e'_s\},
\qquad
N=\operatorname{span}\{e'_{s+1},\ldots,e'_r\}.
\]
The functionals $L(e'_1),\ldots,L(e'_s)$ are linearly independent.
Choose $v_1,\ldots,v_s\in R$ with
\[
L(e'_a)(v_b)=\delta_{ab}\qquad(1\le a,b\le s),
\]
and choose a basis $w_1,\ldots,w_{\dim R-s}$ of $R_0$.
Then the $v_a$ and $w_b$ together form a basis of $R$.
Set
\[
x_a:=P_{v_a},\qquad z_b:=P_{w_b},\qquad
B:=\operatorname{Sym}(R_0)=\mathbb C[z_1,\ldots,z_{\dim R-s}].
\]
Thus
\[
M=B[x_1,\ldots,x_s],
\qquad
D_{e'_a}=\frac{\partial}{\partial x_a}\ (1\le a\le s),
\qquad
D_{e'_a}=0\ (a>s).
\]
Changing basis in $E$ and simultaneously using its dual basis only
rewrites the same Koszul differential. This change is made only after
passing to the linear family $D_e$.

Since $\Delta_i=U_iD_i$ with $U_i$ invertible,
\[
M^\Lambda
=\bigcap_i\ker\Delta_i
=\bigcap_i\ker D_i
=\operatorname{Sym}(R_0).
\]
In particular, the elements of $B=\operatorname{Sym}(R_0)$ are fixed by every translation.

Next let $\varphi_1,\ldots,\varphi_r$ be dual to the basis
$e'_1,\ldots,e'_r$ of $E$. The formulas above identify
\[
K^\bullet(M;D)
\cong
\Omega^\bullet_{M/B}\otimes\wedge^\bullet N^*.
\]
Here $\varphi_a$ for $a\le s$ corresponds to $dx_a$, and
$\wedge^\bullet N^*$ has zero differential. The differential on
the tensor product is $d_{\mathrm{dR}}\otimes\operatorname{id}$,
with total degree equal to the sum of the two exterior degrees.
By \eqref{eq:L418-derham-cohomology},
\begin{equation}\label{eq:L418-koszul-cohomology}
H^1(K^\bullet(M;D))\cong B\otimes N^*.
\end{equation}
Concretely, every closed $1$-cochain $\mathbf a$ has the form
\begin{equation}\label{eq:L418-derivative-decomposition}
\mathbf a=d_Dh+\sum_{j=1}^{r-s}g_j\nu_j,
\qquad h\in M,\quad g_j\in B,
\end{equation}
where $\nu_1,\ldots,\nu_{r-s}\in E^*$ vanish on $E'$ and restrict
to a basis of $N^*$. To see this directly, write
$\mathbf a=\sum_{a=1}^r A_a\varphi_a$. Closedness gives
\[
\frac{\partial A_b}{\partial x_a}
=\frac{\partial A_a}{\partial x_b}\quad(a,b\le s),
\qquad
\frac{\partial A_b}{\partial x_a}=0\quad(a\le s<b).
\]
The first equations say that $\sum_{a\le s}A_a\,dx_a$ is a closed
relative polynomial $1$-form, hence is $d_{\mathrm{dR}}h$.
The second say that $A_b\in B$ for $b>s$.
This proves \eqref{eq:L418-derivative-decomposition}.

Now apply \eqref{eq:L418-derivative-decomposition} to
$\mathbf a:=F^1(\mathbf c)$. In the original basis of $E$, this says
\[
U_i^{-1}(c_i)
=D_i(h)+\sum_j g_j\nu_j(e_i).
\]
Since $g_j\in B$, all $D_i(g_j)$ vanish, hence
$U_i(g_j)=-g_j$. Multiplying the preceding equality by $U_i$ gives
\[
c_i=\Delta_i(h)-\sum_jg_j\nu_j(e_i).
\]
Set
$b:=-h$, $f_j:=g_j$, and 
$\alpha_j(\lambda):=-\nu_j(\lambda\otimes1)$.
Then $\alpha_j$ is an additive homomorphism $\Lambda\to\mathbb C$,
and
\[
c_{e_i}=b-\tau_{e_i}(b)+\sum_jf_j\alpha_j(e_i).
\]

Finally, define for every $\lambda\in\Lambda$
\[
\widetilde c_\lambda
:=b-\tau_\lambda(b)+\sum_jf_j\alpha_j(\lambda).
\]
Since each $f_j$ is fixed by the translations and each $\alpha_j$
is additive, $\widetilde c$ satisfies
\eqref{eq:L418-cocycle}. It agrees with $c$ on the basis elements
$e_i$. A group $1$-cocycle is determined by those values; its identity
implies
$c_0=0$, $c_{-\lambda}=-\tau_{-\lambda}(c_\lambda)$,
and then determines its values on all integer sums of the $e_i$.
Thus $c=\widetilde c$, proving \eqref{eq:L418-conclusion}.
The argument also covers zero-dimensional spaces and empty bases,
with the corresponding sums interpreted as zero.
\end{proof}

\section{General properties of one-sided twisted function algebras}
\label{sec:general-results}

In this section we establish the results on growth, simplicity, primitive ideals,
radicals, and finite-dimensional representations for arbitrary affine algebraic groups.

\subsection{Growth under Hopf \texorpdfstring{$2$}{2}-cocycle twisting}
\label{sec:prelim-growth}

The following observation that growth is twist-invariant applies to arbitrary finitely generated Hopf
algebras.

\begin{proposition}\label{prop:growth-general}
Let $A$ be a finitely generated Hopf algebra over $\C$, and let $J$ be a
Hopf $2$-cocycle for $A$.  There is a finite-dimensional subcoalgebra
$C\subseteq A$, which contains $1$ and generates $A$ as an algebra, such
that for every $n\geq0$, the subspace spanned by products of at most
$n$ elements of $C$ is the same for the original multiplication of $A$ and
for the multiplication of $A_J$.  Thus,
\[
\GKdim A_J=\GKdim A.
\]
\end{proposition}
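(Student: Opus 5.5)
I will start from the fundamental theorem of coalgebras: every finite subset of $A$ lies in a finite-dimensional subcoalgebra. Take a finite set of algebra generators of $A$ together with $1$, and let $C$ be a finite-dimensional subcoalgebra containing them. Then $1\in C$, $C$ generates $A$, and $\Delta(C)\subseteq C\otimes C$. Let $C^n$ be the span of products of at most $n$ elements of $C$ in $A$, and let $C^{(n)}_J$ be the analogous span for $\cdot_J$. Since $1\in C$, both families increase with $n$. I will show by induction on $n$ that $C^n$ is a subcoalgebra and that $C^{(n)}_J=C^n$.

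The key observation is that $C^n$ is a subcoalgebra, because $\Delta$ is multiplicative and $\Delta(C)\subseteq C\otimes C$; hence $\Delta(C^n)\subseteq C^n\otimes C^n$. Consequently, for $a\in C^{n}$ and $b\in C$ we have $a\cdot_J b=a_{(1)}b_{(1)}J(a_{(2)},b_{(2)})\in C^{n}C\subseteq C^{n+1}$. By induction, this gives $C^{(n+1)}_J\subseteq C^{n+1}$, where $C^{(n+1)}_J$ is spanned by $C^{(n)}_J\cdot_J C$.

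For the reverse inclusion, I will use the convolution inverse $J^{-1}$. Untwisting gives $ab=a_{(1)}\cdot_J b_{(1)}\,J^{-1}(a_{(2)},b_{(2)})$. To check this, expand the right-hand side as $a_{(1)}b_{(1)}J(a_{(2)},b_{(2)})J^{-1}(a_{(3)},b_{(3)})$, which equals $ab$ by convolution inverseness on $A\otimes A$, whose coalgebra structure is the tensor product. Thus for $a\in C^n=C^{(n)}_J$ and $b\in C$, the product $ab$ lies in $C^{(n)}_J\cdot_J C\subseteq C^{(n+1)}_J$, again using that $C^n$ is a subcoalgebra. The base cases are $C^0=\C1=C^{(0)}_J$, which holds because $1\cdot_J x=x$ by normalization, together with $C^1=C$. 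This proves the equality of the filtrations.

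Finally, $\GKdim$ of a finitely generated algebra can be computed as $\limsup\log\dim V^n/\log n$ for any finite-dimensional generating subspace $V$ containing $1$. The same $C$ generates $A_J$, since $\bigcup_n C^{(n)}_J=\bigcup_n C^n=A$, so the two growth functions coincide and the dimensions are equal. I do not expect a serious obstacle. The only point needing care is ensuring that $C^n$ is a subcoalgebra, so that both the twisted and untwisted multiplications of $C^n$ by $C$ stay inside $C^{n+1}$; this is immediate from multiplicativity of $\Delta$.
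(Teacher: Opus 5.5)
Your proof is correct and follows essentially the same route as the paper. Both take a finite-dimensional generating subcoalgebra $C\ni 1$ and use that products of subcoalgebras are subcoalgebras to get that the twisted filtration is contained in the ordinary one. Both obtain the reverse inclusion from the untwisting identity $ab=(a_{(1)}\cdot_J b_{(1)})J^{-1}(a_{(2)},b_{(2)})$. Your version only spells out the induction details that the paper leaves implicit: padding with $1$, associativity of $\cdot_J$, and normalization of $J$.
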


\begin{proof}
Choose finitely many algebra generators of $A$.  By the fundamental
theorem of coalgebras, each generator belongs to a finite-dimensional
subcoalgebra of $A$.  Therefore the sum of these subcoalgebras and $\C1$ is a
finite-dimensional subcoalgebra $C\subseteq A$, which generates $A$.

Let $F_nA$ be the span of ordinary products of at most $n$ elements of
$C$, and let $F_n A_J$ be the analogous span for the multiplication of
$A_J$.  Since products of subcoalgebras are subcoalgebras, it follows that
\[
a\cdot_Jb
=
a_{(1)}b_{(1)}J(a_{(2)},b_{(2)})
\in F_{p+q}A,\qquad a\in F_pA,\,\,b\in F_qA.
\]
Thus, by induction, $F_nA_J\subseteq F_nA$ for every $n$.

Conversely, since
$ab=(a_{(1)}\cdot_Jb_{(1)})J^{-1}(a_{(2)},b_{(2)})$, $a,b\in A$,
induction yields $F_nA\subseteq F_nA_J$ for every $n$.  Thus, the two
filtrations coincide and have the same growth.
\end{proof}

\subsection{Minimality and simplicity}
\label{sec:prelim-simplicity}
We now answer the simplicity part of \cite[Question~6.1(1)]{G}. 

\begin{theorem}
\label{thm:minimal-simple}
Let $G$ be an affine algebraic group over $\C$, and let $J$ be a Hopf
$2$-cocycle for $\Os(G)$.  Then the following are equivalent:
\begin{enumerate}
\item $J$ is minimal.
\item $\Supp(J)=G$.
\item The one-sided twisted algebra $\Os(G)_J$ is simple.
\end{enumerate}
\end{theorem}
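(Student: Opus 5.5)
The equivalence (1)$\Leftrightarrow$(2) is exactly \eqref{eq:minimal-support}, i.e.\ the support theorem \cite[Theorem~3.1]{G}, so the work is in (1)$\Leftrightarrow$(3). For (1)$\Rightarrow$(3) I will apply Lemma~\ref{lem:braided-galois-simple} with $H:=\Os(G)^J={}_J\Os(G)_J$, $R:=R^J$, and $A:=\Os(G)_J$. The coaction is the comultiplication $\Delta$; it makes $\Os(G)_J$ a right $\Os(G)^J$-comodule algebra. This is the standard fact that $A_J$ is a $({}_{J}A_J)$-comodule algebra, checked directly from \eqref{eq:hopf-2-cocycle}. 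Minimality of $J$ means $(H,R)$ is minimal cotriangular. Two hypotheses of the lemma remain to be verified.

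\emph{Galois property.} The map $\gamma(a\otimes b)=a\cdot_J b_{(1)}\otimes b_{(2)}$ should have inverse $a\otimes h\mapsto a\cdot_J S_J(h_{(1)})\otimes h_{(2)}$, with $S_J$ a suitably twisted antipode. More conceptually, $A_J$ is the cleft (indeed trivial-coaction-twisted) Galois object corresponding to the fiber functor given by $J$, and cleft comodule algebras with $\C$ coinvariants are Galois. I will cite \cite{Bi} here rather than recompute.

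\emph{$R^J$-commutativity.} For this I plan a direct computation. Since $\Os(G)$ is commutative,
\[
b_{(0)}\cdot_J a_{(0)}\,R^J(a_{(1)},b_{(1)})
= b_{(1)}a_{(1)}J(b_{(2)},a_{(2)})\,R^J(a_{(3)},b_{(3)}).
\]
Here $R^J(a_{(1)},b_{(1)})$ is computed in the twisted coalgebra, which equals the original one. Substituting $R^J=J_{21}^{-1}*J$, the factor $J(b,a)J^{-1}(b,a)$ cancels and leaves $a_{(1)}b_{(1)}J(a_{(2)},b_{(2)})=a\cdot_J b$. This is routine bookkeeping with Sweedler indices. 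The lemma then gives simplicity of $\Os(G)_J$.

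For (3)$\Rightarrow$(2) I argue by contrapositive. Suppose $H:=\Supp(J)\subsetneq G$, and take the support representative $J=J_H\circ(\pi\otimes\pi)$ from \S\ref{sec:prelim-support-charts}. The subalgebra $\Os(G)^H=\Os(G/H)$ of right $H$-invariants is central in $\Os(G)_J$ and multiplies untwisted, as in the proof of Lemma~\ref{lem:q615-fiber}. If $\dim G/H>0$, then $G/H$ is a positive-dimensional quasi-projective variety. In that case I will exhibit a proper nonzero ideal of the form $\mathfrak m\cdot\Os(G)_J$, with $\mathfrak m$ the vanishing ideal of a point in $\Os(G)^H$ (or of a semi-invariant chart function). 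Properness comes from the nonzero fiber $\Os(H)_{J_H}$ in Lemma~\ref{lem:q615-fiber}. A cleaner route, valid even when $G/H$ is finite but nontrivial, is to take a proper nonzero normal semi-invariant $f$ from Lemma~\ref{lem:q615-affine-charts}, or an invariant idempotent when $G/H$ is disconnected. By \eqref{eq:q615-normal-ideals}, $f\,\Os(G)$ is then a two-sided ideal. It is nonzero, and it is proper when $f$ vanishes somewhere, which can be arranged when $H\neq G$.

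The main obstacle I expect is this last step in full generality: producing a semi-invariant or invariant function that is nonconstant, nonunit, yet nonzero. The difficulty arises when $G/H$ is affine, where units and nonzero functions need care, and in the finite-index case. My plan there is to use nonconstant regular functions on $G/H$ vanishing at a chosen point (or a characteristic function of a coset when $G/H$ is finite). The resulting ideal contains a nonzero element but not $1$, since every element vanishes on $q^{-1}(x)$.
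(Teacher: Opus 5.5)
Your proofs of (1)$\Leftrightarrow$(2) and (1)$\Rightarrow$(3) follow the paper. You use the same $R^J$-commutativity computation (your Sweedler bookkeeping is correct) and feed it into Lemma~\ref{lem:braided-galois-simple}, with the Galois property taken as standard. The converse, however, is not closed.

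\textbf{The gap.} Your argument for (3)$\Rightarrow$(2) rests on the sentence ``it is proper when $f$ vanishes somewhere, which can be arranged when $H\neq G$,'' and you leave it unproved. Your fallback plan, using nonconstant regular functions on $G/H$, does not cover the hard case. The regular functions on $G/H$, i.e.\ $\Os(G)^H$, can be just $\C$ even when $H\neq G$. For $G=\mathrm{SL}_2$ with $G/H\cong\mathbb P^1$ (see Remark~\ref{rem:center-versus-simplicity} and \cite[Example~3.7]{G}), there are no nonconstant invariants. Every ideal $\mathfrak m\cdot\Os(G)_J$ with $\mathfrak m\subset\Os(G)^H$ is then zero. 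So the difficulty is a non-affine quotient, not an affine one as you suggest.

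\textbf{Patching your route.} The semi-invariant approach can be rescued, but only with an explicit case split.
\begin{enumerate}
\item If $G/H$ is not affine, every affine chart $X_i$ from Lemma~\ref{lem:q615-affine-charts} is a proper open subset. Then $q^{-1}(X_i)=G_{f_i}\subsetneq G$, so $f_i$ is nonzero, vanishes somewhere, and is not a unit. By \eqref{eq:q615-normal-ideals}, $f_i\Os(G)$ is a nonzero proper two-sided ideal.
\item If $G/H$ is affine with more than one point, take a nonconstant invariant minus its value at a chosen point.
\end{enumerate}

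\textbf{The simpler missing idea.} Your own last sentence already contains it. The ideal you want lies inside the vanishing ideal of a coset, and that vanishing ideal is itself a nonzero proper two-sided ideal. Take the support representative $J'=J_H\circ(\pi\otimes\pi)$. Since $\pi$ is a Hopf algebra map, $\pi(f\cdot_{J'}g)=\pi(f)\cdot_{J_H}\pi(g)$. So restriction is an algebra surjection $\Os(G)_{J'}\twoheadrightarrow\Os(H)_{J_H}$. Its kernel $I(H)$ is nonzero because $H$ is a proper closed subgroup and $\Os(G)$ is reduced. It is proper because $\Os(H)\neq0$. Add that $\Os(G)_J\cong\Os(G)_{J'}$ for gauge-equivalent cocycles, and you have the paper's entire proof of (3)$\Rightarrow$(1). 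It needs no semi-invariants, charts, or case distinctions.
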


\begin{proof}
The equivalence of \textup{(1)} and \textup{(2)} is
\eqref{eq:minimal-support}, which follows from
\cite[Theorem~3.1]{G}.  

The coproduct of $\Os(G)$ defines a right $\Os(G)^J$-coaction on the algebra $\Os(G)_J$, and Hopf $2$-cocycle
twisting carries the right $\Os(G)$-Galois object $\Os(G)$ to the right
$\Os(G)^J$-Galois object $\Os(G)_J$. Namely, the canonical map
\begin{equation*}
\gamma:=\gamma_J:
\Os(G)_J\otimes \Os(G)_J\longrightarrow \Os(G)_J\otimes \Os(G)^J,
\qquad
a\otimes b\longmapsto(a\cdot_Jb_{(1)})\otimes b_{(2)},
\end{equation*}
is bijective. Moreover, since $\Os(G)$ is commutative, we have for every $a,b\in \Os(G)$,
\begin{align*}
(b_{(1)}\cdot_Ja_{(1)})R^J(a_{(2)},b_{(2)})
&=
a_{(1)}b_{(1)}J(b_{(2)},a_{(2)})
J^{-1}(b_{(3)},a_{(3)})J(a_{(4)},b_{(4)})
\notag\\
&=
a_{(1)}b_{(1)}J(a_{(2)},b_{(2)})
=
a\cdot_Jb.
\end{align*}
Thus, $\Os(G)_J$ is $R^J$-commutative. Now if $J$ is minimal, then $R^J$ is nondegenerate, and
Lemma~\ref{lem:braided-galois-simple} implies that $\Os(G)_J$ is simple, which
proves that (1) implies (3).

Conversely, suppose that $J$ is not minimal.  Then by
\cite[Theorem~3.1]{G}, there are a proper closed subgroup
$H:=\Supp(J)\subsetneq G$, and a minimal Hopf $2$-cocycle $J_H$ for
$\Os(H)$, such that $J$ is gauge equivalent to 
$J':=J_H\circ(\pi\otimes\pi)$,
where $\pi:\Os(G)\twoheadrightarrow\Os(H)$ is the canonical surjection. Then we have
$\Os(G)_J\cong\Os(G)_{J'}$ as algebras. For $f,g\in\Os(G)$ we have
\begin{align*}
 \pi(f\cdot_{J'}g)
 &=\pi(f_{(1)}g_{(1)})
       J_H\bigl(\pi(f_{(2)}),\pi(g_{(2)})\bigr)=\pi(f)\cdot_{J_H}\pi(g).
\end{align*}
Hence restriction induces an algebra surjection $\pi:\Os(G)_{J'}\twoheadrightarrow\Os(H)_{J_H}$.
Since $H\subsetneq G$, its kernel is nonzero and proper. Thus
$\Os(G)_{J'}$, hence $\Os(G)_J$, is not simple. This proves
that (3) implies (1).
\end{proof}

\begin{remark}
\label{rem:center-versus-simplicity}
Theorem~\ref{thm:minimal-simple} is stronger than 
$Z(\Os(G)_J)\cong\Os(G/\Supp(J))$ \cite[Theorem~3.5]{G}.  For a non-affine homogeneous space,
$\Os(G/\Supp(J))$ may equal $\C$ when
$\Supp(J)\neq G$, hence trivial center does not imply minimality for an
arbitrary affine algebraic group. \qed
\end{remark}

\subsection{Primitive and maximal ideals}\label{sec:Primitive and maximal ideals}
We now give an affirmative answer to \cite[Question 6.1(5)]{G}.

\begin{theorem}\label{thm:q615-main}
Let $G$ be an affine algebraic group, let $J$ be a Hopf
$2$-cocycle for $\mathscr O(G)$, and let $H$ be the support of $J$.  Then the following hold:
\begin{enumerate}
\item
$\operatorname{Prim}(\mathscr O(G)_J)=\operatorname{Max}(\mathscr O(G)_J)$
and this set is a single $G$-orbit.
\item
Choose a support representative
$J^{\mathrm s}:=J_H\circ(\pi\otimes\pi)$ and a $G$-equivariant algebra
isomorphism
\[
 \Phi:\Os(G)_J\xrightarrow{\cong}\Os(G)_{J^{\mathrm s}}.
\]
For $a\in G$, let
$\pi_a^{\mathrm s}(f)(h):=f(ah)$ and $P_{aH}:=\Phi^{-1}(\ker\pi_a^{\mathrm s})$ (when $J=J^{\mathrm s}$ and $\Phi=\id$, the underlying vector
subspace $P_{aH}$ is $I(aH)$).
Then the map
\begin{equation}\label{eq:q615-parametrization}
 G(\C)/H(\C)\xrightarrow{\cong}{\rm Prim}(\Os(G)_J)={\rm Max}(\Os(G)_J),
 \qquad aH\longmapsto P_{aH},
\end{equation}
is a $G$-equivariant homeomorphism for the Zariski topology on closed points
and the hull--kernel topology. Moreover, every primitive quotient is isomorphic to
$\Os(H)_{J_H}$. 
\end{enumerate}
\end{theorem}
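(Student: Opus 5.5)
We reduce at once to the support representative $J^{\mathrm s}=J_H\circ(\pi\otimes\pi)$ through the $G$-equivariant isomorphism $\Phi$. Gauge equivalence by a convolution-invertible $\gamma$ gives $f\mapsto f_{(1)}\gamma(f_{(2)})$, which commutes with the left $G$-translations. It therefore suffices to treat $J=J^{\mathrm s}$ and $\Phi=\id$.

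For each $a\in G$, the map $\pi^{\mathrm s}_a(f)(h)=f(ah)$ is an algebra surjection $\Os(G)_J\twoheadrightarrow\Os(H)_{J_H}$. This is the computation in the proof of Theorem~\ref{thm:minimal-simple}, composed with left translation by $a^{-1}$, which is an automorphism of $\Os(G)_J$. Since $J_H$ is minimal, Theorem~\ref{thm:minimal-simple} applied to $H$ shows that $\Os(H)_{J_H}$ is simple. Hence $P_{aH}=I(aH)$ is a maximal ideal, and in particular it is primitive. The ideal depends only on the coset $aH$, and $aH\neq bH$ gives distinct ideals because $I(aH)$ determines the closed set $aH$. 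The map in \eqref{eq:q615-parametrization} is therefore well defined, injective, $G$-equivariant, and lands in $\operatorname{Max}$. The orbit statement follows from $g\cdot P_{aH}=P_{gaH}$.

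The main step is to show that every primitive ideal $P$ equals some $P_{aH}$. Let $P$ be the annihilator of a simple module $M$. First reduce to the case $G=G^\circ H$: the subalgebra of functions supported on a single $G^\circ H$-coset is a direct factor, since idempotents of $\Os(G)$ that are right $H$-invariant are central in $\Os(G)_J$ by the argument of Lemma~\ref{lem:q615-fiber}(1). Then use the charts of Lemma~\ref{lem:q615-affine-charts}, which provide regular normal semi-invariants $f_i$ whose nonvanishing loci $X_i$ cover $G/H$. The functions $f_i$ generate the unit ideal of $\Os(G)$ up to $H$-invariant multipliers, and the $d$-th powers of the $f_i$ can be taken of a common weight. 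So some $f=f_i$ does not lie in $P$.

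Since $f$ is normal by Lemma~\ref{lem:q615-normal-localization}(1), $fM$ is a nonzero submodule, so $fM=M$; likewise $f$ acts injectively, since $\{m: fm=0\}$ is a submodule. Thus $M$ is a module over the localization $(\Os(G)_J)[f^{-1}]$ of \eqref{eq:q615-localization}. By Lemma~\ref{lem:q615-fiber}(1), $\Os(X_f)$ is central in this localization. Because the algebra is countably dimensional and $M$ is simple, Dixmier's lemma (Schur's lemma over $\C$) makes $\Os(X_f)$ act on $M$ through a character $\mathfrak m_x$ for some $x=aH\in X_f$. Then $M$ is a module over the fiber algebra, which is $\Os(H)_{J_H}$ by \eqref{eq:q615-simple-fiber}, and that algebra is simple. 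Hence $P\supseteq P_{aH}$, and maximality gives $P=P_{aH}$. This shows $\operatorname{Prim}=\operatorname{Max}$ and that the map is a bijection. I expect the real difficulty to lie in this localization step: one needs the powers of the $f_i$ to generate an ideal that meets the complement of $P$, and one needs the fiber identification to be compatible with the twisted localization.

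Finally, for the topology, the closed sets of the hull--kernel topology are the sets $V(I)=\{aH: I\subseteq I(aH)\}$. Take $I$ to be a two-sided ideal. Its $H$-saturation, namely the ideal of $\Os(G)$ generated by the right $H$-coinvariant elements of $I$, cuts out a closed $H$-stable subset of $G$. This can be checked chart by chart using the central subalgebras $\Os(X_f)$ from Lemma~\ref{lem:q615-fiber}, and it identifies $V(I)$ with a Zariski closed subset of $G/H$. Conversely, a Zariski closed set $Z\subseteq G/H$ equals $V(I(q^{-1}Z))$, because $I(q^{-1}Z)$ is right $H$-stable and hence a two-sided ideal by the product formula.
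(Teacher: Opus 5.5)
Your overall route matches the paper's. The simple quotients $\pi_a$ give maximality. Charts, normality, localization, Dixmier's lemma, the central subalgebra $\Os(X_f)$ and the simple fiber give surjectivity, and central idempotents reduce to $G=G^\circ H$. However, two of your justifications do not work as written.

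\textbf{The step ``some $f_i\notin P$''.} The reason you give (the $f_i$ generate the unit ideal ``up to $H$-invariant multipliers'', with powers of common weight) does not prove this. Read literally, it is false. When $G/H$ is not affine, $\Os(G)^H$ can be $\C$, for instance when $G/H\cong\mathbb P^1$. Each $f_i$ has zeros, because the affine charts $X_i$ are proper. So any $g_if_i\in\Os(G)^H=\C$ must vanish, and no relation $1=\sum_i g_if_i$ of that kind exists.

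What you need is simpler. The sets $G_{f_i}=q^{-1}(X_i)$ cover $G$, so the ordinary Nullstellensatz gives $\sum_i f_i\Os(G)=\Os(G)$ with arbitrary coefficients. Lemma~\ref{lem:q615-normal-localization}(1) says $f_i\Os(G)=\Os(G)_J\cdot_J f_i$ as subspaces, since the bijection $U_{\chi_i}$ absorbs the twisting of the coefficient. Hence $\sum_i\Os(G)_J\cdot_J f_i=\Os(G)_J$, and if every $f_i$ lay in $P$, then $P$ would be the whole algebra. No invariance of multipliers or matching of weights is required.

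\textbf{The topology.} You identify $V(I)$ with the zero set of the ideal generated by the right $H$-coinvariant elements of $I$. This also fails in the non-affine case. For $G/H\cong\mathbb P^1$ and $I=P_{aH}$, one has $I\cap\Os(G)^H=0$. That zero set is then all of $G$, while $V(I)=\{aH\}$.

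A chart-by-chart repair would need every two-sided ideal of $(\Os(G)_J)[f^{-1}]$ to be generated by its intersection with $\Os(X_f)$. None of the cited lemmas supplies this; Lemma~\ref{lem:q615-fiber} treats only the ideals $\mathfrak m_x$.

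The paper avoids the issue. The set $\{a\in G\mid I\subseteq I(aH)\}$ is the common zero set of the regular functions $a\mapsto f(ah)$ for $f\in I$ and $h\in H$. It is therefore closed and right $H$-stable. Since $q:G\to G/H$ is an open surjection, its image $V(I)$ is closed. Your converse direction, via the right $H$-stable ideal $I(q^{-1}Z)$, is fine as stated.
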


\begin{proof}
Transporting through $\Phi$, it suffices to work with the support
representative $J=J_H\circ(\pi\otimes\pi)$. In the remainder of the proof
we suppress the superscript $\mathrm s$ and write
$\pi_a(f)(h)=f(ah)$ and $P_{aH}=\ker\pi_a$.
Restriction to the closed coset $aH$ is a surjective comodule algebra map
before twisting. Twisting gives the algebra surjection
$\pi_a:\Os(G)_J\twoheadrightarrow\Os(H)_{J_H}$ whose image is simple by Theorem \ref{thm:minimal-simple}.
Therefore, $P_{aH}$ is maximal and hence primitive.  Since distinct cosets
have distinct defining ideals, \eqref{eq:q615-parametrization} is
injective.

To prove that (\ref{eq:q615-parametrization}) is surjective, first assume that $G=G^\circ H$.  Choose the functions
$f_1,\ldots,f_s$ from Lemma~\ref{lem:q615-affine-charts}.  Since they have no
common zero in $G$, 
$\sum_i \mathscr O(G)f_i=\mathscr O(G)$ by the Nullstellensatz.  Hence by \eqref{eq:q615-normal-ideals}, we have
\begin{equation}\label{eq:q615-cover-ideals}
 \sum_i \mathscr O(G)_J\cdot_J f_i=\mathscr O(G)_J.
\end{equation}
Now let $M$ be a simple $\mathscr O(G)_J$-module.
By \eqref{eq:q615-cover-ideals}, some $f=f_i$ acts nontrivially on $M$.
Since $f$ is normal, both $fM$ and $\ker f$ are
$\mathscr O(G)_J$-submodules of $M$, so $f$ acts bijectively by simplicity of $M$.  Thus, $M$
extends to a simple $(\mathscr O(G)_J)[f^{-1}]$-module.

Since $(\mathscr O(G)_J)[f^{-1}]$ is countable-dimensional over $\mathbb C$ and $M$
is a simple, hence cyclic, $(\mathscr O(G)_J)[f^{-1}]$-module, $M$ is also
countable-dimensional. Hence by Dixmier's lemma
\cite[Lemma~0.5.2]{W},
\[
\operatorname{End}_{(\mathscr O(G)_J)[f^{-1}]}(M)
=
\mathbb C.
\]
Thus, the central subalgebra
$\mathscr O(X_f)\subseteq Z((\mathscr O(G)_J)[f^{-1}])$ from Lemma \ref{lem:q615-fiber}(1)
acts on $M$ through a character
\[
\mathscr O(X_f)\longrightarrow\mathbb C.
\]
Its kernel is $\mathfrak m_x$ for some $x:=aH\in X_f(\mathbb C)$.
Thus, $M$ is a module over
$(\mathscr O(G)_J)[f^{-1}]/\mathfrak m_x(\mathscr O(G)_J)[f^{-1}]$.  Since this quotient is simple by
\eqref{eq:q615-simple-fiber}, we deduce
\[
 \operatorname{Ann}_{(\mathscr O(G)_J)[f^{-1}]}(M)=\mathfrak m_x(\mathscr O(G)_J)[f^{-1}].
\]
Since the quotient map is restriction to $aH$, contracting its kernel to
$\mathscr O(G)_J$ gives
\[
\operatorname{Ann}_{\mathscr O(G)_J}(M)=\mathscr O(G)_J\cap\mathfrak m_x(\mathscr O(G)_J)[f^{-1}]=\ker\pi_a=P_{aH},
\]
which proves surjectivity of \eqref{eq:q615-parametrization} when $G=G^\circ H$.

For arbitrary $G$, let $L:=G^\circ H$; it is an open and closed
subgroup of $G$ of finite index. Since the characteristic functions of its left
cosets are right $H$-invariant central idempotents in $\mathscr O(G)_J$, we get a 
finite product decomposition
\[
\mathscr O(G)_J\cong\prod_{\gamma\in G/L}\mathscr O(\gamma L)_{J},
\]
whose factors are isomorphic to
$\mathscr O(L)_{J}$ by left translation.  Every simple $\mathscr O(G)_J$-module $M$ is supported on exactly one
factor.  Since $L=L^\circ H$, the preceding argument applies to this
factor and again gives $\operatorname{Ann}_{\mathscr O(G)_J}(M)=P_{aH}$ for some $a\in G$. Thus, \eqref{eq:q615-parametrization} is a bijection.
In particular, every primitive ideal of $\mathscr O(G)_J$ is maximal (and vice versa).

Finally, since for $a_0\in G$,
$a_0\cdot P_{aH}=P_{a_0aH}$, it follows that the bijection \eqref{eq:q615-parametrization} is $G$-equivariant, and its image is a single $G$-orbit.

For a two-sided ideal $I\subseteq \mathscr O(G)_J$, the inverse image in $G$ of
$\{aH\mid I\subseteq P_{aH}\}$
is the common zero set of all functions $a\mapsto f(ah)$ with
$f\in I$ and $h\in H$; it is closed and right $H$-stable.
Since the quotient map $G\to G/H$ is an open surjection, this defines a
closed subset of $G/H$.  Thus, the bijection is continuous.

Conversely, if $Y\subseteq G/H$ is closed, then
$I(q^{-1}(Y))\subseteq\mathscr O(G)$ is a right $H$-stable ideal.
It remains a two-sided ideal after twisting, and its hull in
$\operatorname{Prim}(\mathscr O(G)_J)$ corresponds exactly to $Y$, proving
continuity of the inverse.
\end{proof}

We can now give an affirmative answer to \cite[Question~6.1(4)]{G}.

\begin{corollary}\label{cor:q615-radical}
Let $G$ be an affine algebraic group, and let $J$ be a Hopf $2$-cocycle for $G$. Then
the algebra $\Os(G)_J$ is semiprime and semiprimitive; that is, 
$\Rad(\Os(G)_J)=0= \Jac(\Os(G)_J)$.
\end{corollary}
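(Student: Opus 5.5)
The plan is to derive both vanishing statements from the explicit description of primitive ideals in Theorem~\ref{thm:q615-main}. Since every semiprimitive ring is semiprime ($\Rad\subseteq\Jac$, because the prime radical is the intersection of all prime ideals and every primitive ideal is prime), it suffices to prove $\Jac(\Os(G)_J)=0$.

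First I pass to the support representative $J^{\mathrm s}=J_H\circ(\pi\otimes\pi)$ via the algebra isomorphism $\Phi$. Radicals are preserved by algebra isomorphisms, so we may assume $J=J^{\mathrm s}$. The Jacobson radical is the intersection of all primitive ideals. By Theorem~\ref{thm:q615-main}(2), these are exactly the ideals $P_{aH}=\ker\pi_a$ with $a\in G$, and as a vector subspace of $\Os(G)$ each $P_{aH}$ equals $I(aH)=\{f\mid f(ah)=0\ \forall h\in H\}$. Hence
\[
\Jac(\Os(G)_J)=\bigcap_{a\in G}I(aH)=\{f\in\Os(G)\mid f(ah)=0\ \forall a\in G,\,h\in H\}.
\]
Taking $h=1$, such an $f$ vanishes at every closed point of $G$. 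Since $\Os(G)$ is reduced (we are in characteristic zero, so $G$ is smooth) and $G(\C)$ is dense, $f=0$. Therefore $\Jac(\Os(G)_J)=0$, and consequently $\Rad(\Os(G)_J)=0$.

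The only point needing care is that the intersection of primitive ideals is computed as an intersection of subspaces. This is legitimate because twisting leaves the underlying vector space unchanged and each $\ker\pi_a$ is literally $I(aH)$. No step looks like a real obstacle: the substantial input, the complete list of primitive ideals, is already supplied by Theorem~\ref{thm:q615-main}.
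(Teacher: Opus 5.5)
Your proposal is correct and takes essentially the same route as the paper. You pass to the support representative and use Theorem~\ref{thm:q615-main} to write $\Jac(\Os(G)_J)=\bigcap_{aH}P_{aH}=\bigcap_{aH}I(aH)$. This intersection is $0$ because the cosets cover $G$ and $\Os(G)$ is reduced, and $\Rad\subseteq\Jac$ then gives semiprimeness.
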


\begin{proof}
Let $H$ be the support of $J$. Then after the gauge replacement, we have by Theorem \ref{thm:q615-main},
\[
 \operatorname{Jac}(\mathscr O(G)_J)
 =\bigcap_{aH\in G/H}P_{aH}
 =\bigcap_{aH\in G/H}I(aH)=0
\]
since the $H$-cosets cover $G$ and $\mathscr O(G)$ is reduced.  Finally, since $\Rad(\Os(G)_J)\subseteq  \Jac(\Os(G)_J)$, it is also zero.
\end{proof}

\begin{corollary}\label{cor:q615-affine-center}
If $G/H$ is affine then, under the center identification of
\cite[Theorem~3.5]{G}, every primitive ideal of $\mathscr O(G)_J$ is generated by its
central character; that is,
\[
 P_{aH}=\mathfrak m_{aH}\cdot_J\mathscr O(G)_J,
 \qquad \mathfrak m_{aH}\in\operatorname{Max}(\mathscr O(G/H)).
\]
\end{corollary}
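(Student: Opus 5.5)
We work with the support representative $J=J_H\circ(\pi\otimes\pi)$. By Theorem~\ref{thm:q615-main}, $P_{aH}$ has underlying space $I(aH)$. By \cite[Theorem~3.5]{G}, the center is $\Os(G)^H=\Os(G/H)$, embedded as right $H$-invariant functions. For $c\in\Os(G)^H$ we have $\rho(c)=c\otimes1$, so $c\cdot_J g=cg$ for all $g$. Hence the two-sided ideal $\mathfrak m_{aH}\cdot_J\Os(G)_J$ has underlying space $\mathfrak m_{aH}\Os(G)$, the ordinary ideal of $\Os(G)$ generated by the pulled-back maximal ideal. The claim therefore reduces to a purely commutative statement:
\[
\mathfrak m_{aH}\Os(G)=I(aH)
\]
inside $\Os(G)$, whenever $X:=G/H$ is affine.

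The inclusion $\subseteq$ is clear, because functions in $\mathfrak m_{aH}$, pulled back along $q$, vanish on $aH=q^{-1}(aH)$. For the reverse inclusion I would use that $q:G\to X$ is an $H$-torsor, hence faithfully flat. When $X$ is affine, $\Os(G)$ is faithfully flat over $\Os(X)$. The scheme-theoretic fiber $\operatorname{Spec}\bigl(\Os(G)/\mathfrak m_{aH}\Os(G)\bigr)$ is isomorphic to $H$ via $h\mapsto ah$; this is the identification used in the proof of Lemma~\ref{lem:q615-fiber}. Since $H$ is reduced over $\C$, the fiber ring is reduced. So $\mathfrak m_{aH}\Os(G)$ is a radical ideal with zero set $aH$, and the Nullstellensatz gives $\mathfrak m_{aH}\Os(G)=I(aH)$.

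Alternatively, one can argue locally with Lemma~\ref{lem:q615-fiber}(2). For each chart $X_f\ni aH$, the localized quotient is $\Os(H)_{J_H}$, so $\mathfrak m_x\Os(G_f)=I(aH)\Os(G_f)$. Since $X$ is affine, ideals of $\Os(G)$ are determined by their localizations on the covering $G_{f_i}$. On charts not containing $aH$, both ideals become the unit ideal, because $\mathfrak m_{aH}$ contains a function invertible there.

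The main obstacle is the identification of the scheme-theoretic fiber with $H$, that is, the reducedness of $\mathfrak m_{aH}\Os(G)$. This rests on $G\to G/H$ being a torsor, which is locally trivial in the fppf sense, so that base change to the point $aH$ gives $H$. The non-connected case needs no separate treatment, because the torsor statement holds for arbitrary $G$.
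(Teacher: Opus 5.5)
Your proposal is correct and is essentially the paper's argument. The paper just invokes Lemma~\ref{lem:q615-fiber} with $f=1$ and $X_f=G/H$, and your two steps (elements of $\Os(G)^H$ multiply without twisting, and the scheme-theoretic fiber of $G\to G/H$ over $aH$ is $aH\cong H$, so $\mathfrak m_{aH}\Os(G)=I(aH)$) are exactly the content of that lemma's proof in this case. Your chart-by-chart alternative is unnecessary, and its claim that $\mathfrak m_{aH}$ contains a single function invertible on a chart missing $aH$ should be replaced by the Nullstellensatz on that chart, since the charts $X_f$ need not be principal opens of $G/H$.
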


\begin{proof}
This follows from Lemma~\ref{lem:q615-fiber} for $f=1$ and 
$X_f=G/H$.
\end{proof}

\begin{remark}
For a non-affine quotient $G/H$, the global center need not distinguish
primitive ideals.  In 
\cite[Example~3.7]{G}, $G=\operatorname{SL}_2$ and $G/H\cong\mathbb P^1$,
whereas $Z(\mathscr O(G)_J)=\mathbb C$.  Nevertheless, Theorem~\ref{thm:q615-main}
parametrizes the primitive ideals of $\mathscr O(G)_J$ by $\mathbb P^1(\mathbb C)$. \qed
\end{remark}

\begin{corollary}\label{thm:fd-classification}
Let $G$ be an affine algebraic group, and let $J$ be a
Hopf $2$-cocycle for $\Os(G)$ with support $H$.
\begin{enumerate}
\item The following are equivalent:
\begin{enumerate}
\item
$\Os(G)_J$ has a nonzero finite-dimensional representation.
\item
$\Os(G)_J$ has a finite-dimensional irreducible representation.
\item
$H$ is finite.
\end{enumerate}
\item 
If $H$ is finite, then every simple $\Os(G)_J$-module is finite-dimensional of dimension $\sqrt{|H|}$, and  
${\rm Irr}_{{\rm fd}}(\Os(G)_J)\cong(G/H)(\mathbb C)$.
\end{enumerate}
\end{corollary}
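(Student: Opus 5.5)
The plan is to reduce everything to Theorem~\ref{thm:q615-main}. By that theorem, the primitive ideals of $\Os(G)_J$ are exactly the $P_{aH}$, $aH\in G/H$, and each primitive quotient $\Os(G)_J/P_{aH}$ is isomorphic to $\Os(H)_{J_H}$, which is simple by Theorem~\ref{thm:minimal-simple}.

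For (1), the implication (b)$\Rightarrow$(a) is trivial. For (a)$\Rightarrow$(b), take a composition factor of a nonzero finite-dimensional module; it is a finite-dimensional simple module.

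For (b)$\Rightarrow$(c), let $M$ be a finite-dimensional simple module. Its annihilator is some primitive ideal $P_{aH}$, so $\Os(H)_{J_H}\cong \Os(G)_J/P_{aH}$ embeds in $\End_{\C}(M)$. This forces $\dim\Os(H)=\dim\Os(H)_{J_H}<\infty$, which means $H$ is finite.

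For (c)$\Rightarrow$(a),(b), suppose $H$ is finite. Then $\Os(H)_{J_H}$ is a finite-dimensional simple algebra over $\C$, hence isomorphic to a matrix algebra $M_d(\C)$ with $d^2=|H|$. Composing the surjection $\pi_a:\Os(G)_J\twoheadrightarrow\Os(H)_{J_H}$ with the standard representation $\C^d$ gives a finite-dimensional irreducible representation. (Indeed $|H|$ is a perfect square, consistent with the classical fact that minimal twists exist only for groups of central type.)

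For (2), let $M$ be any simple $\Os(G)_J$-module, not assumed finite-dimensional. By Theorem~\ref{thm:q615-main}, its annihilator is some $P_{aH}$, so $M$ is a simple module over $\Os(G)_J/P_{aH}\cong M_d(\C)$. Hence $M\cong\C^d$, of dimension $\sqrt{|H|}$. Isomorphism classes of simple modules over $M_d(\C)$ are unique, so a simple module is determined up to isomorphism by its annihilator. Thus $M\mapsto\operatorname{Ann}(M)$ is a bijection from ${\rm Irr}_{\rm fd}(\Os(G)_J)$ to $\operatorname{Prim}(\Os(G)_J)$, and composing with \eqref{eq:q615-parametrization} identifies it with $(G/H)(\C)$.

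The only step needing care is (b)$\Rightarrow$(c): we must know that every finite-dimensional simple module has a primitive annihilator of the form $P_{aH}$. This is immediate from the definition of primitive ideals together with the surjectivity established in Theorem~\ref{thm:q615-main}.
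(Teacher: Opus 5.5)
Your proposal is correct and follows essentially the same route as the paper. Both reduce everything to Theorem~\ref{thm:q615-main}, using that every primitive quotient is the simple algebra $\Os(H)_{J_H}$: it embeds in $\End_{\C}(M)$ for a finite-dimensional simple module $M$, and it is $M_d(\C)$ with $d^2=|H|$ when $H$ is finite. The differences are cosmetic: you take a composition factor instead of a simple quotient for (a)$\Rightarrow$(b), and you spell out the bijection $M\mapsto\operatorname{Ann}(M)$ a little more explicitly.
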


\begin{proof}
A nonzero finite-dimensional
module has a simple quotient, so (a) implies (b). By Theorem \ref{thm:q615-main}, every
simple $\Os(G)_J$-module factors through a quotient isomorphic to the simple
algebra $\Os(H)_{J_H}$. A nonzero representation of $\Os(H)_{J_H}$ is faithful. Thus,
a finite-dimensional simple $\Os(G)_J$-module would embed $\Os(H)_{J_H}$ into a finite-dimensional
matrix algebra. Since $\Os(H)_{J_H}$ has underlying vector space $\Os(H)$, this
forces $H$ to be finite.

Conversely, if $H$ is finite, then $\Os(H)_{J_H}$ is a finite-dimensional simple
complex algebra, hence $\Os(H)_{J_H}\cong M_d(\C)$ with $d^2=|H|$.
Theorem \ref{thm:q615-main} gives one such quotient for every $aH\in(G/H)(\C)$ and
shows that every simple $\Os(G)_J$-module factors through one of them.
There is exactly one simple module up to isomorphism for each matrix-algebra
quotient, of dimension $d$. Distinct cosets have distinct annihilators.
This proves all assertions.
\end{proof}

\section{Hopf \texorpdfstring{$2$}{2}-cocycles for connected solvable groups}\label{sec:solvable-cocycles}

In this section $G$ is a {\bf connected solvable} affine algebraic group over
$\C$, $U:=R_u(G)$, $T:=G/U$,
and we choose a maximal torus, again denoted by $T$, so that
$G=T\ltimes U$. 
We also let
\[
   \mathfrak g:=\Lie(G),\qquad
   \mathfrak u:=\Lie(U),\qquad
   \Lambda:=X(T).
\]
Thus, $\Lambda\cong\mathbb Z^{\dim T}$ and
$\Os(T)=\C[\Lambda]$.

\begin{proposition}\label{prop:classical-fiber}
Every fiber functor $F:\Rep(G)\rightarrow\operatorname{Vect}$ is classical (i.e., dimension-preserving).  Hence,
every fiber functor is given by a Hopf $2$-cocycle for $\Os(G)$.
\end{proposition}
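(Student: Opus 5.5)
We need to show that every fiber functor on $\Rep(G)$ is classical, i.e.\ preserves dimensions, for connected solvable $G$. The general framework is that a fiber functor on $\Rep(G)$ corresponds to a right $\Os(G)$-Galois object $A$ (a $G$-torsor in the noncommutative sense), and it is classical exactly when $\dim F(V)=\dim V$ for all $V$. Once this holds, $A\cong\Os(G)$ as comodules, so $A\cong\Os(G)_J$ for a Hopf $2$-cocycle $J$. The second sentence of the proposition then follows by this standard Doi--Ulbrich/Schauenburg correspondence. The work is therefore in proving dimension preservation.

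I would argue by dévissage along the structure $G=T\ltimes U$. Every simple object of $\Rep(G)$ is a character of $T$, inflated through $G\to T$: $U$ acts trivially on irreducibles by Lie--Kolchin, and $T$ is a torus. Thus the semisimplification of $\Rep(G)$ is $\Rep(T)$, the category of $\Lambda$-graded vector spaces. A fiber functor is exact and faithful, so $\dim F(V)=\sum_i\dim F(L_i)$ over the composition factors $L_i$ of $V$. It therefore suffices to show $\dim F(\chi)=1$ for every character $\chi\in\Lambda$.

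For this step, the key input is that each character $\chi$ is an invertible object. Since $\chi\otimes\chi^{-1}\cong\mathbf 1$ and $F$ is monoidal with $F(\mathbf 1)=\C$, we get $F(\chi)\otimes F(\chi^{-1})\cong\C$. This forces $\dim F(\chi)=1$, because dimensions of vector spaces multiply and are positive integers. Combined with exactness, every $V$ satisfies $\dim F(V)=\dim V$.

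There is a subtlety here, and it is the main obstacle. A fiber functor in the paper's sense is presumably an exact faithful $\C$-linear tensor functor, but it need not be compatible with the symmetric braiding. Dimension preservation is the nontrivial point for non-symmetric fiber functors on non-semisimple categories. Still, the composition-factor argument above uses only exactness and monoidality, so I expect it to go through. One point needs care: $\Rep(G)$ is not finite, so the Galois object need not be finite-dimensional. I would therefore phrase the conclusion through the comodule $A=\bigoplus$ coend, namely $A\cong\Os(G)$ as right comodules. Here I would use that the coalgebra $\Os(G)$ is a union of finite-dimensional subcoalgebras $C$, and that $F$ restricted to $\mathrm{Comod}(C)$ preserves dimensions. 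Then Ulbrich's theorem gives the cleft/twisted form with a cocycle $J$ (cleftness follows since $A\cong\Os(G)$ as comodules). This identification is the step I would check most carefully.
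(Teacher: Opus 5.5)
The dimension part of your proposal matches the paper's proof. Lie--Kolchin makes every simple object a character, hence invertible, so $F(\chi)\otimes F(\chi^{-1})\cong\C$ forces $\dim F(\chi)=1$, and exactness along a composition series gives $\dim F(V)=\dim V$.

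Where you genuinely differ is the passage to a Hopf $2$-cocycle. The paper does not deduce cleftness from classicality at all. It observes that $\Os(G)$ is pointed (again by Lie--Kolchin) and quotes \cite[Theorem~1.9]{Bi}, by which Galois objects over pointed Hopf algebras are cleft. You instead argue: classical $\Rightarrow$ normal basis property $A\cong\Os(G)$ as right comodules $\Rightarrow$ cleft (Doi--Takeuchi) $\Rightarrow$ twisted by a cocycle. Your route shows classical $\Rightarrow$ cleft for an arbitrary Hopf algebra, which makes the ``Hence'' in the statement a genuine implication. The paper's route is shorter but relies on a black box specific to pointed Hopf algebras, and that black box makes the dimension count logically unnecessary for the second sentence.

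The step you flagged is correct, but gluing isomorphisms $A_C\cong C$ over finite-dimensional subcoalgebras $C$ does not work directly. Isomorphisms chosen independently for each $C$ need not be compatible as $C$ grows. A clean global argument runs as follows.
\begin{itemize}
\item The Galois map gives $A\otimes A\cong A\otimes\Os(G)$ as right comodules, with coaction on the last factor. So $A$ is a direct summand of a cofree comodule, hence injective.
\item An injective comodule is the direct sum of the injective hulls of the simple summands of its socle.
\item For a grouplike $g$, the multiplicity of $\C g$ in the socle of $A$ is $\dim\{a\in A\mid \rho(a)=a\otimes g\}$. This is $\dim F$ of the corresponding character, which is $1$.
\item $\Os(G)$ is also injective, with multiplicity-free socle $\bigoplus_g\C g$.
\end{itemize}
Hence $A\cong\Os(G)$ as comodules.
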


\begin{proof}
By the Lie-Kolchin theorem, every irreducible finite-dimensional
rational representation of $G$ is one-dimensional.  Thus every simple object
is invertible, so a tensor functor sends it to an invertible vector space,
which must also be one-dimensional.  Then exactness of $F$, and a composition
series, give
$\dim F(V)=\dim V$ 
for every $V\in\Rep(G)$.  Since $\Os(G)$ is pointed, the Galois object associated with a fiber functor is cleft by \cite[Theorem~1.9]{Bi}; equivalently, it is
represented by a Hopf $2$-cocycle.
\end{proof}

Let $J$ be a Hopf $2$-cocycle for $G$, and let $H:=\Supp(J)$.

\begin{corollary}\label{cor:center-solvable}
There exist $k,\ell\in\mathbb{Z}^{\ge 0}$ such that
$G/H\cong\mathbb A^k\times(\Gm)^{\ell}$
as varieties, and hence
\[
   Z(\Os(G)_J)
   \cong
   \C[x_1,\ldots,x_k,y_1^{\pm1},\ldots,y_{\ell}^{\pm1}]
\]
as algebras. In particular,
$J$ is minimal if and only if $Z(\Os(G)_J)=\C$.
\end{corollary}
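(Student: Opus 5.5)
The plan is to combine the center computation of \cite[Theorem~3.5]{G}, which gives an algebra isomorphism $Z(\Os(G)_J)\cong\Os(G/H)$ with $H:=\Supp(J)$, with the structure theorem for homogeneous varieties of connected split solvable groups \cite[Theorem~1]{B}. Over $\C$ every connected solvable affine algebraic group is split: by Lemma~\ref{lem:normal-series}, $U=R_u(G)$ has a composition series by closed connected $G$-normal subgroups with successive quotients $\Ga$, and $T$ is a torus. So \cite[Theorem~1]{B} applies to the homogeneous space $X:=G/H$ for the closed subgroup $H$.

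First, I check that \cite[Theorem~1]{B} gives an isomorphism of varieties $G/H\cong\mathbb A^k\times(\Gm)^\ell$. The theorem says that a homogeneous variety of a connected split solvable group is isomorphic to $\mathbb A^k\times(\Gm)^\ell$, provided the quotient is connected and $H$ is an arbitrary closed (possibly disconnected) subgroup. Here $G$ is connected, so $G/H$ is irreducible, and no further hypothesis should be needed. If the cited version assumes $H$ connected, I would reduce to it as follows. Write $H_u=H\cap U$, which is connected by Lemma~\ref{lem:characters}, and let $D=H/H_u\subseteq T$ be diagonalizable. Fiber $G/H$ over $T/D$, a torus; the fibers are $U/(\text{conjugates of }H_u)$, which are affine spaces since $U$ is unipotent. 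Triviality of the resulting fibration would then follow by the same affine-base torsor vanishing argument used in Lemma~\ref{lem:triangular-coordinate}. In particular $G/H$ is affine.

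Next, taking coordinate rings gives
\[
\Os(G/H)\cong\C[x_1,\ldots,x_k]\otimes\C[y_1^{\pm1},\ldots,y_\ell^{\pm1}],
\]
and composing with \cite[Theorem~3.5]{G} yields the asserted isomorphism for $Z(\Os(G)_J)$.

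Finally, for the last claim: the Laurent polynomial ring above equals $\C$ if and only if $k=\ell=0$, that is, if and only if $G/H$ is a point, that is, $H=G$. By \eqref{eq:minimal-support} (equivalently Theorem~\ref{thm:minimal-simple}) this is exactly minimality of $J$. This contrasts with Remark~\ref{rem:center-versus-simplicity}: in the solvable case $G/H$ is affine, so the center detects minimality.

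The main obstacle is the applicability of \cite[Theorem~1]{B} when $H$ is disconnected. The delicate case is finite or disconnected $D$, where $T/D$ is still a torus but the $U$-part must be handled equivariantly. I would first try to quote \cite{B} directly for arbitrary closed subgroups, and fall back on the fibration argument above only if needed.
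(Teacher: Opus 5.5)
Your proposal is correct and follows the paper's proof: splitness of connected solvable groups over $\C$, \cite[Theorem~1]{B} applied to $G/H$, then \cite[Theorem~3.5]{G}, and finally $k=\ell=0$ iff $H=G$ iff $J$ is minimal. The fallback fibration argument is not needed, since \cite[Theorem~1]{B} covers arbitrary homogeneous varieties and hence arbitrary closed $H$. If you did use the fallback, note that $G/H\to T/D$ is not itself a vector-group torsor, so its triviality must be proved one $\Ga$-step at a time along the $G$-normal series of Lemma~\ref{lem:normal-series}.
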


\begin{proof}
A connected solvable group over $\C$ is split.  By Rosenlicht's theorem
\cite[Theorem 1]{B}, every homogeneous variety
under a split solvable group is a product of affine lines and punctured
affine lines.  Thus, $G/H$ is affine and has the stated form. Since by  
\cite[Theorem 3.5]{G} $Z(\Os(G)_J)=\Os(G/H)$, the stated algebra isomorphism follows.

Finally, since $G$ is
connected, $G/H$ is a point exactly when $H=G$.
\end{proof}

\subsection{The algebra \texorpdfstring{$\mathscr O(G)_J$}{O(G)\_J} as an iterated Ore extension}\label{sec:Ore extensions}

Choose a normal additive series 
$$U=U_0\supset\cdots\supset U_n=1$$
as in Lemma~\ref{lem:normal-series}, and let
\[
G_i:=G/U_i,\qquad A_i:=\Os(G_i),\qquad 0\le i\le n.
\]
Thus $G_0=T$, $G_n=G$, and the quotient maps give the Hopf-subalgebra
tower $A_0\subset\cdots\subset A_n=\Os(G)$. We use the coordinates
$z_i$, characters $\chi_i$, and tensors $w_i$ supplied by
Lemma~\ref{lem:triangular-coordinate}. For a Hopf $2$-cocycle $J$ on
$\Os(G)$, let $J_i$ denote its restriction to $A_i\otimes A_i$.

The coefficient quantum torus is written
$\C^\eta[\Lambda]$ as in \eqref{eq:algebraic-quantum-torus}.

\begin{theorem}\label{thm:solvable-ore-structure}
Let \(G\) be a connected solvable affine algebraic group, let
\(n:=\dim U\), and let \(J\) be a Hopf \(2\)-cocycle for
\(\Os(G)\).  Then there exist
$
\eta\in Z^2(\Lambda,\mathbb C^\times)
$,
and derivations \(\delta_i\) of $(A_{i-1})_{J_{i-1}}$, such
that
\begin{equation}\label{eq:solvable-ore}
\Os(G)_J
\cong
\mathbb C^\eta[\Lambda]
[z_1;\delta_1]\cdots[z_n;\delta_n]
\end{equation}
as algebras. Moreover, this presentation has a PBW filtration for which
\begin{equation}\label{eq:solvable-gr}
\operatorname{gr}\Os(G)_J
\cong
\mathbb C^\eta[\Lambda]\otimes
\mathbb C[z_1,\ldots,z_n]
\end{equation}
as algebras, where the variables on the right commute with the quantum torus and
with one another.
\end{theorem}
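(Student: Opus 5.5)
We argue by induction along the Hopf-subalgebra tower $A_0\subset A_1\subset\cdots\subset A_n=\Os(G)$. Since each $A_i$ is a Hopf subalgebra, the restriction $J_i$ is a Hopf $2$-cocycle on $A_i$. Moreover, $(A_i)_{J_i}$ is a subalgebra of $\Os(G)_J$, because the twisted product of elements of $A_i$ only involves $A_i$.

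\emph{Base case.} We have $A_0=\Os(T)=\C[\Lambda]$, whose basis of characters $\chi^\lambda$ consists of grouplikes. Hence $\chi^\lambda\cdot_{J}\chi^\mu=J(\chi^\lambda,\chi^\mu)\chi^{\lambda+\mu}$. Setting $\eta(\lambda,\mu):=J(\chi^\lambda,\chi^\mu)$, the cocycle identity \eqref{eq:hopf-2-cocycle} restricted to grouplikes shows that $\eta\in Z^2(\Lambda,\C^\times)$; the values are nonzero by convolution invertibility. Thus $(A_0)_{J_0}=\C^\eta[\Lambda]$.

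\emph{Inductive step.} By Lemma~\ref{lem:triangular-coordinate}, $A_i=A_{i-1}[z_i]$ with $\Delta(z_i)=z_i\otimes1+\chi_i\otimes z_i+w_i$. For $a\in A_{i-1}$ we compute both products $z_i\cdot_J a$ and $a\cdot_J z_i$ from \eqref{eq:one-sided-cocycle-product}, obtaining
\[
z_i\cdot_J a=z_ia+(\text{terms in }A_{i-1}),\qquad a\cdot_Jz_i=a_{(1)}z_i\,J(a_{(2)},1)+(\text{terms in }A_{i-1})=az_i+(\text{terms in }A_{i-1}),
\]
where the $\chi_i\otimes z_i$ part contributes $a_{(1)}\chi_i$-type factors times $J(a_{(2)},z_i)$ and related terms, all lying in $A_{i-1}$. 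The point is that $z_i$ occurs only with coefficient $1\otimes$ something grouplike-free in the relevant slot, so $z_i$ enters exactly linearly with coefficient $a$ on both sides. Thus $\delta_i(a):=z_i\cdot_Ja-a\cdot_Jz_i$ lies in $A_{i-1}$, and by associativity it is a derivation of $(A_{i-1})_{J_{i-1}}$.

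This is the step we expect to be the main obstacle. It must be checked carefully that no twisting of the leading coefficient occurs, i.e.\ that we do not get $z_i\cdot_Ja=\sigma(a)z_i+\cdots$ with a nontrivial automorphism $\sigma$. Since $J(1,-)=\eps$, the term $z_i\otimes1$ contributes exactly $az_i$ in both orders. A $\chi_i$-twisted automorphism could in principle arise from terms like $a_{(1)}\chi_i J(a_{(2)},z_i)$, but these lie in $A_{i-1}$ rather than multiplying $z_i$. We should verify this precisely, and if an automorphism appears we would absorb it into the coefficient normalization. Once this is settled, the twisted $\cdot_J$-powers of $z_i$ agree with the ordinary powers modulo lower-degree terms. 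By induction on degree, the elements $z_i^{\cdot k}$ form a free left $(A_{i-1})_{J_{i-1}}$-basis of $(A_i)_{J_i}$, since $A_i$ is free over $A_{i-1}$ on the ordinary powers and the transition is unitriangular. Hence $(A_i)_{J_i}\cong(A_{i-1})_{J_{i-1}}[z_i;\delta_i]$, and iterating gives \eqref{eq:solvable-ore}.

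\emph{Associated graded.} Filter by total degree in $z_1,\dots,z_n$, with $\C^\eta[\Lambda]$ in degree $0$. We need that $\delta_i$ lowers this degree, i.e.\ that $\delta_i(A_{i-1})$ has $z$-degree at most that of the input. This follows from the triangular form of the $w_j$: the degree of $w_j$ is at most $1$ in each earlier coordinate, after the grading is chosen with weights increasing in $i$ if necessary. Then in $\operatorname{gr}$ each $z_i$ commutes with everything, giving \eqref{eq:solvable-gr}. If total degree fails, we would instead use a weighted degree $\deg z_i=N^i$ for large $N$, which forces $\delta_i$ to strictly lower the weight.
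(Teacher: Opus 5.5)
Your proposal follows the paper's proof essentially step for step. The shared steps are the grouplike base case giving $\mathbb C^\eta[\Lambda]$, the expansion of $z_i\cdot_J a$ and $a\cdot_J z_i$ through $\Delta(z_i)=z_i\otimes1+\chi_i\otimes z_i+w_i$, the leading-term/unitriangularity argument giving $(A_i)_{J_i}\cong(A_{i-1})_{J_{i-1}}[z_i;\delta_i]$, and a weighted PBW filtration. Your hesitation in the second step is unnecessary: $z_i\otimes1$ is the only term with $z_i$ in the first leg, and $J(1,-)=\varepsilon=J(-,1)$, so the leading coefficient is untwisted and everything else lies in $A_{i-1}$.

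The one thing to drop is the total-degree option, because it fails in general. For example, the admissible coordinate $z_3=x_3+x_1^3$ on $\Ga^3$ has $w_3=3z_1^2\otimes z_1+3z_1\otimes z_1^2$, and twisting by $\exp(\partial_{x_2}\otimes\partial_{x_1})$ gives $[z_3,z_2]_J=-3z_1^2$. You must therefore commit to weights chosen, as in the paper, with $\deg\delta_i(s)<\deg s+\deg z_i$ on a finite generating set of $(A_{i-1})_{J_{i-1}}$. This inequality, not $\delta_i$ lowering weight, is what makes $\bar z_i$ central in $\operatorname{gr}$.
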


\begin{proof}
Let \(J_i\) be the restriction of \(J\) to \(A_i\otimes A_i\), and let
$B_i:=(A_i)_{J_i}$.
Also write
\[
w_i=\sum_s p_s\otimes q_s.
\]
Then for \(a\in A_{i-1}\), we have
$$
z_i\cdot_J a=
z_i a
+\chi_i a_{(1)}J_i(z_i,a_{(2)})
+\sum_s p_s a_{(1)}J_i(q_s,a_{(2)}),
$$
and
$$
a\cdot_J z_i=
a z_i
+a_{(1)}\chi_iJ_i(a_{(2)},z_i)
+\sum_s a_{(1)}p_sJ_i(a_{(2)},q_s).
$$
Since the leading terms cancel, and all remaining terms belong to
\(A_{i-1}\), we have
\[
\delta_i(a)
:=
z_i\cdot_Ja-a\cdot_Jz_i
\in B_{i-1}\qquad (a\in A_{i-1}).
\]
Since \(\delta_i\) is the restriction of the inner derivation
\(\operatorname{ad}(z_i)\) of \(B_i\), it is an ordinary derivation of
\(B_{i-1}\).

Now filter \(A_i=A_{i-1}[z_i]\) by ordinary \(z_i\)-degree.  
For $a,b\in A_{i-1}$ and $p,q\geq0$, the leading
$z_i$-degree term of
$(az_i^p)\cdot_{J_i}(bz_i^q)$ is
$(a\cdot_{J_{i-1}}b)z_i^{p+q}$.
Moreover, the symbol of $z_i$ commutes with the coefficient algebra.
Thus $\operatorname{gr}_{z_i}B_i\cong B_{i-1}[\overline z_i]$, and
$B_i\cong B_{i-1}[z_i;\delta_i]$ as algebras. 

Give the quantum-torus algebra degree zero. Inductively choose
positive integers $N_i=\deg z_i$. Fix a finite generating set
of $B_{i-1}$ consisting of torus generators, their inverses,
and the earlier Ore generators. Choose $N_i$ so large that
$\deg\delta_i(s)<\deg s+N_i$
for every member $s$ of this generating set. The derivation
rule gives the same estimate on filtered products. The resulting
weighted PBW filtration satisfies
\[
\operatorname{gr}B_n
\cong\mathbb C^\eta[\Lambda]
       \otimes\mathbb C[\overline z_1,\ldots,\overline z_n],
\qquad \deg\overline z_i=N_i.
\]

Finally, $A_0=\mathscr O(T)=\mathbb C[\Lambda]$, and since every \(X^\lambda\) is grouplike, we have
\[
X^\lambda\cdot_JX^\mu
=
J(X^\lambda,X^\mu)X^{\lambda+\mu}.
\]
Thus,
$
B_0\cong\mathbb C^\eta[\Lambda]$ and 
$
\eta(\lambda,\mu):=J(X^\lambda,X^\mu)$,
and iteration proves \eqref{eq:solvable-ore}-\eqref{eq:solvable-gr}.
\end{proof}

\begin{corollary}\label{cor:ring-properties}
For every connected solvable affine algebraic group $G$, and every Hopf
$2$-cocycle $J$ for $\Os(G)$, the following hold:
\begin{enumerate}
\item The algebra $\Os(G)_J$ is an affine left and right Noetherian domain.
\item The Gelfand--Kirillov dimension of $\Os(G)_J$ is $\dim G$.
\end{enumerate}
\end{corollary}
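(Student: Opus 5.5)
The plan is to deduce everything from the iterated Ore presentation of Theorem~\ref{thm:solvable-ore-structure}, together with its associated graded algebra \eqref{eq:solvable-gr}.

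\emph{Affine and Noetherian.} For affineness, note that $\Os(G)_J$ is generated by a finite set of torus generators $X^{\pm e_j}$ (with $e_j$ a basis of $\Lambda$) together with $z_1,\ldots,z_n$; alternatively, Proposition~\ref{prop:growth-general} shows directly that the finitely many generators of $\Os(G)$ also generate $\Os(G)_J$. For Noetherianity, first observe that the quantum torus $\C^\eta[\Lambda]$ is left and right Noetherian. One way to see this is as an iterated skew Laurent extension $\C[X^{\pm e_1}][X^{\pm e_2};\sigma_2]\cdots$, where each $\sigma_j$ is a diagonal automorphism; after rescaling, $\eta$ becomes a bicharacter, as in the proof of Lemma~\ref{lem:homogeneous-derivations}. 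Each differential Ore extension $R[z;\delta]$ of a left and right Noetherian ring is again left and right Noetherian, by the skew Hilbert basis theorem. Induction along \eqref{eq:solvable-ore} then gives the Noetherian property for $\Os(G)_J$.

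\emph{Domain.} The quantum torus is a domain: under a total order on $\Lambda\cong\mathbb Z^r$ compatible with addition, leading terms multiply to nonzero scalar multiples of $X^{\lambda+\mu}$. A differential Ore extension of a domain is again a domain, since leading coefficients in $z$ multiply without cancellation. Induction finishes the argument. Alternatively, $\operatorname{gr}\Os(G)_J\cong\C^\eta[\Lambda]\otimes\C[z_1,\ldots,z_n]$ is a domain, and the filtration is exhaustive and separated, being indexed by $\mathbb Z_{\ge0}$ with degree-zero part $\C^\eta[\Lambda]$. Hence $\Os(G)_J$ is a domain.

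\emph{GK dimension.} The cleanest route is Proposition~\ref{prop:growth-general}: $\GKdim\Os(G)_J=\GKdim\Os(G)$, and the latter equals $\dim G$ because $\Os(G)$ is a finitely generated commutative domain whose Krull dimension is $\dim G$. This argument avoids any analysis of GK dimension under Ore extensions. As a cross-check, the graded algebra has GK dimension $r+n=\dim T+\dim U=\dim G$.

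The only potential obstacle is confirming that the ring-theoretic facts (the skew Hilbert basis theorem and the domain property) apply to the specific presentation \eqref{eq:solvable-ore}. Theorem~\ref{thm:solvable-ore-structure} already supplies genuine differential Ore extensions $B_i\cong B_{i-1}[z_i;\delta_i]$, so this is routine.
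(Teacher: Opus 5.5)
Your proposal is correct and follows the paper's proof. Noetherianity and the domain property come from the iterated differential Ore presentation over the quantum torus, viewed as an iterated skew Laurent extension, and the Gelfand--Kirillov dimension comes from Proposition~\ref{prop:growth-general}; you additionally spell out that $\GKdim\Os(G)=\dim G$ and give an optional associated-graded argument for the domain property, both of which are fine.
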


\begin{proof}
(1) The twisted group algebra of the free abelian group $\Lambda$ is an
iterated skew Laurent polynomial algebra, hence an affine Noetherian
domain, and differential Ore extensions preserve Noetherianity and the
domain property.  

(2) This follows from Proposition~\ref{prop:growth-general}.
\end{proof}  

\begin{remark}
For connected solvable groups, Corollary \ref{cor:ring-properties} answers the Noetherianity
question for $\Os(G)_J$ and the domain question in
\cite[Question~6.1(2)--(3)]{G}. \qed
\end{remark}

\begin{corollary}\label{cor:solvable-simple-properties}
If \(J\) is a minimal Hopf
$2$-cocycle for $G$, then \(\Os(G)_J\) is a simple affine
Noetherian domain, which is an iterated differential Ore
extension of a quantum torus, $Z(\Os(G)_J)=\mathbb C$,  
$$
\operatorname{GKdim} \Os(G)_J =\dim G,$$
and it has a PBW basis
$
\left\{
X^\lambda z_1^{a_1}\cdots z_n^{a_n}\mid
\lambda\in\Lambda,\, a_i\geq0
\right\}$. \qed
\end{corollary}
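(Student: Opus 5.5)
The plan is to assemble Corollary~\ref{cor:solvable-simple-properties} from results already proved, applied to a minimal $J$.

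\emph{Simplicity and center.} By Theorem~\ref{thm:minimal-simple}, minimality of $J$ is equivalent to simplicity of $\Os(G)_J$. By Corollary~\ref{cor:center-solvable}, minimality is also equivalent to $Z(\Os(G)_J)=\C$. Alternatively, $\Supp(J)=G$ by \eqref{eq:minimal-support}, so $G/H$ is a point and $Z(\Os(G)_J)\cong\Os(G/H)=\C$ by \cite[Theorem~3.5]{G}.

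\emph{Ring-theoretic properties.} The affine, Noetherian, and domain assertions, together with $\GKdim\Os(G)_J=\dim G$, are Corollary~\ref{cor:ring-properties}. The description as an iterated differential Ore extension of the quantum torus $\C^\eta[\Lambda]$ is Theorem~\ref{thm:solvable-ore-structure}.

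\emph{PBW basis.} This is the only point that needs an argument. I would argue by induction on $i$.
\begin{itemize}
\item For $i=0$, $B_0=\C^\eta[\Lambda]$ has basis $\{X^\lambda\}$, since the twist does not change the underlying space $\C[\Lambda]$.
\item For the inductive step, assume $B_{i-1}$ has basis $\{X^\lambda z_1^{a_1}\cdots z_{i-1}^{a_{i-1}}\}$. A differential Ore extension $B_{i-1}[z_i;\delta_i]$ is a free left $B_{i-1}$-module on $\{z_i^{a}\}_{a\ge0}$. Hence the products $X^\lambda z_1^{a_1}\cdots z_i^{a_i}$, formed in the twisted multiplication, give a basis of $B_i$.
\end{itemize}

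It remains to check that these twisted monomials agree with the ordinary ones up to lower terms. The leading-term computation in the proof of Theorem~\ref{thm:solvable-ore-structure} shows that the leading $z_i$-degree term of a twisted product is the twisted product of the coefficients times $z_i^{p+q}$. So either description yields a basis, and \eqref{eq:solvable-gr} confirms this at the graded level.

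I expect no real obstacle. The only care needed is to state that the monomials are taken in the twisted product, or to note that twisted and ordinary monomials are related by a unitriangular change of basis with respect to the PBW filtration.
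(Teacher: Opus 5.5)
Your proposal is correct and follows the paper, which gives no separate proof of this corollary and treats it as an immediate combination of Theorem~\ref{thm:minimal-simple}, Corollary~\ref{cor:center-solvable}, Theorem~\ref{thm:solvable-ore-structure}, and Corollary~\ref{cor:ring-properties}. Your induction for the PBW basis, using that each $B_{i-1}[z_i;\delta_i]$ is a free left $B_{i-1}$-module on the powers of $z_i$, is the standard justification of the step the paper leaves implicit.
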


\begin{example}\label{ex:basic-solvable-blocks}

(1) If \(G=T\) is a torus, then minimality is equivalent to
nondegeneracy of the corresponding alternating bicharacter
$q(\lambda,\mu)=\eta(\lambda,\mu)\eta(\mu,\lambda)^{-1}$, and in this case 
$\mathscr O(T)_J\cong\mathbb C^\eta[\Lambda]$
is a simple quantum torus.  

(2)  If \(G=U\) is unipotent and \(J\) is minimal, then
\(\dim U=2m\), and $\mathscr O(U)_J\cong \mathcal A_m(\mathbb C)$ as algebras by \cite[Theorem 4.7]{G}. 
Thus, the unipotent simple algebras are precisely Weyl algebras.

(3)  If \(G=T\times U\) is connected nilpotent, let
\(V\subseteq U\) be the support of the restriction of \(J\) to
\(\mathscr O(U)\), and write \(\dim V=2m\).  Then by
\cite[Theorem 4.12]{G},
$\Os(G)_J\cong
\bigl(\mathscr O(U/V)\otimes \mathcal A_m(\mathbb C)\bigr)
\#_\sigma\mathbb C[\Lambda]$,
where the character lattice acts by triangular translation
automorphisms.  For minimal \(J\), this crossed product is simple.

(4) Let $G:=\mathbb G_m\ltimes\mathbb G_a$, so that 
$
\mathscr O(G)=\mathbb C[x^{\pm1},y]$ with
$\Delta(x)=x\otimes x$ and $\Delta(y)=y\otimes1+x\otimes y$.
Let
\[
X:=x\frac{\partial}{\partial x},
\qquad
Y:=x\frac{\partial}{\partial y}.
\]
Then $[X,Y]=Y$. For $h\in\mathbb C^\times$, set
\[
\mathcal F_h
:=\exp\bigl(-X\otimes\log(1-hY)\bigr)
=\sum_{n\geq0}\frac{h^n}{n!}
 X(X+1)\cdots(X+n-1)\otimes Y^n,
\]
where the product for $n=0$ is $1$. Let $J_h:=J_{\mathcal F_h}$
be the Hopf $2$-cocycle defined by \eqref{eq:J-from-F}; explicitly,
\[
J_h(f,g)
=(\varepsilon\otimes\varepsilon)
 \bigl(\mathcal F_h\triangleright(f\otimes g)\bigr).
\]
By direct computation, 
$
x\cdot_Jy-y\cdot_Jx=h x^2$, so  
setting
$
E:=-h^{-1}x^{-1}\cdot_Jy
$
yields
\[
E\cdot_Jx-x\cdot_JE=x.
\]
Therefore, we have algebra isomorphisms 
\[
\mathscr O(G)_{J_h}
\cong
\mathbb C[x^{\pm1}]
\left[E;x\frac{d}{dx}\right]
\cong
\D(\mathbb G_m)
\cong
\mathcal A_1(\mathbb C)[x^{-1}].
\]
Since this algebra is simple, $J_h$ is minimal by Theorem \ref{thm:minimal-simple}. 
In particular, a simple $\Os(G)_J$ for solvable $G$ need not be a Weyl algebra. \qed
\end{example}

\begin{corollary}\label{cor:solvable-dimension-two}
Let \(G\) be a connected solvable affine algebraic group of dimension \(2\), and let \(J\) be a
minimal Hopf $2$-cocycle for $G$.  Then \(\Os(G)_J\) is isomorphic to exactly one of
$
\mathcal A_1(\mathbb C)$, $
\D(\mathbb G_m)$, or
$
\mathbb C\langle x^{\pm1},y^{\pm1}\rangle/(xy-qyx)$ for a non-root of unity $q\in\mathbb C^\times$.
Moreover, \(q\) and \(q^{-1}\) define isomorphic quantum tori,
and these are the only identifications within that family.
\end{corollary}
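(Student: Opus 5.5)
Since $G$ is connected, solvable and of dimension $2$, we have $\dim T+\dim U=2$, and the proof splits into three cases according to $\dim U\in\{0,1,2\}$. By Corollary~\ref{cor:solvable-simple-properties}, $\Os(G)_J$ is a simple Noetherian domain of GK dimension $2$ with trivial center in every case.

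\textbf{Case $\dim U=2$.} Here $G=U$ is unipotent, and Example~\ref{ex:basic-solvable-blocks}(2) gives $\Os(G)_J\cong\mathcal A_1(\C)$.

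\textbf{Case $\dim U=0$.} Here $G=T$ is a two-dimensional torus. By Example~\ref{ex:basic-solvable-blocks}(1), $\Os(G)_J\cong\C^\eta[\mathbb Z^2]\cong\C\langle x^{\pm1},y^{\pm1}\rangle/(xy-qyx)$ with $q=q_\eta(e_1,e_2)$. Nondegeneracy of $q_\eta$ on $\mathbb Z^2$ means that no nonzero $\lambda$ has $q_\eta(\lambda,-)=1$. If $q^N=1$ for some $N\ge1$, then $Ne_1$ lies in the radical, so $q$ is not a root of unity.

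\textbf{Case $\dim U=1$.} Here $G=\Gm\ltimes\Ga$ with character $\chi$ describing the action. If $\chi$ is trivial, then $G$ is commutative and $R^J$ is symmetric. Restricted to the $\Ga$ part, it is a Hopf $2$-cocycle on a one-dimensional unipotent group, and these are trivial, since minimal supports of unipotent groups are even-dimensional. So the support is proper, contradicting minimality. Hence $\chi=k\in\mathbb Z\setminus\{0\}$, and $G\cong\Gm\ltimes_k\Ga$.

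Theorem~\ref{thm:solvable-ore-structure} gives $\Os(G)_J\cong\C[x^{\pm1}][z;\delta]$, where the quantum torus is commutative of rank one. By Lemma~\ref{lem:homogeneous-derivations}, any derivation of $\C[x^{\pm1}]$ has the form $p(x)\,x\frac{d}{dx}$ with $p\in\C[x^{\pm1}]$. A direct computation of $\delta(x)=z\cdot_Jx-x\cdot_Jz$ from the coproduct $\Delta(z)=z\otimes1+x^k\otimes z$ shows that $\delta(x)$ is a weight vector, namely $c\,x^{k+1}$. Thus $\delta=c\,x^{k}\,x\frac{d}{dx}$, and minimality (trivial center) forces $c\neq0$. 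Putting $E:=c^{-1}x^{-k}z$ yields $[E,x]=x$, exactly as in Example~\ref{ex:basic-solvable-blocks}(4). Hence $\Os(G)_J\cong\D(\Gm)$.

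The main obstacle is controlling $\delta$ in this last case. The key steps are showing $\delta(x)\in\C x^{k+1}$ using the $T$-weight structure (that is, equivariance of the twisted product under the left torus action after choosing a $T$-invariant representative), and showing $c\ne0$ via simplicity. If $c=0$, then $\Os(G)_J$ is commutative, hence not simple.

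\textbf{Distinguishing the algebras.} The three types are pairwise non-isomorphic.
\begin{itemize}
\item $\mathcal A_1(\C)$ has only scalar units.
\item $\D(\Gm)$ has a unit group strictly larger than $\C^\times$, consisting of $\C^\times x^{\mathbb Z}$. Its units commute up to scalars only with one another, and no two units $u,v$ satisfy $uv=qvu$ with $q\neq1$.
\item A quantum torus has units $\C^\times x^ay^b$, including noncommuting pairs.
\end{itemize}

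\textbf{Identifications among quantum tori.} An isomorphism of quantum tori maps units to units, so it induces an automorphism of the lattice $\mathbb Z^2=\{\text{units}\}/\C^\times$. Under such an automorphism, the commutator bicharacter transforms by the determinant, taking $q$ to $q^{\det}=q^{\pm1}$. The swap $x\leftrightarrow y$ realizes $q\mapsto q^{-1}$. Hence $q$ and $q^{-1}$ give isomorphic quantum tori, and these are the only identifications.
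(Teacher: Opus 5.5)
Your argument reaches the right conclusion, but the paragraph excluding trivial $\chi$ asserts something false. Delete it; nothing else depends on it.

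\textbf{The false claim.} The commutative group $\Gm\times\Ga$ \emph{does} carry minimal Hopf $2$-cocycles. Take $\Os(G)=\C[x^{\pm1},y]$, $X=x\partial_x$ and $Y=\partial_y$. The abelian twist $\exp(X\otimes Y)$ gives $[x,y]_J=x$, so $\Os(G)_J\cong\D(\Gm)$, which is simple. This is the case $\Lambda=\mathbb Z$, $V=\C$, $\omega=0$ of Theorem~\ref{thm:abelian-realization}. Equivalently, $r=X\wedge Y$ has carrier $\mathfrak g$ in Proposition~\ref{prop:support-hull}.

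Your reasoning there is a non sequitur. Gauge-triviality of the restriction of $J$ to the $\Ga$-factor (or to $\Os(T)$) says nothing about $\Supp(J)$. Minimality can come entirely from the mixed $\mathfrak t\otimes\mathfrak u$ component, which is the $\mathfrak z_r\otimes\mathfrak z_u$ factor in Theorem~\ref{thm:invariant-solvable}.

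\textbf{Why the conclusion survives.} Nothing later uses $k\neq 0$. Expanding with $\Delta(z)=z\otimes 1+x^k\otimes z$ gives $\delta(x)=\bigl(J(z,x)-J(x,z)\bigr)x^{k+1}$ for every $k$, including $k=0$, where $E=c^{-1}z$. So $\Os(G)_J\cong\D(\Gm)$ stands once that paragraph is removed. This direct expansion is the same one used in the proof of Theorem~\ref{thm:rank-one}. Your appeal to torus-equivariance and a ``$T$-invariant representative'' is therefore unnecessary.

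\textbf{Comparison with the paper.} With that repair, your route differs from the paper's only in the case $\dim T=\dim U=1$.
\begin{itemize}
\item The paper never uses the shape of the coproduct. It takes an arbitrary $\delta=f(x)\frac{d}{dx}$ and notes that simplicity of $\C[x^{\pm1}][z;\delta]$ forces $\C[x^{\pm1}]$ to be $\delta$-simple. Since $\delta(f)=ff'$, the ideal $(f)$ is $\delta$-stable, so $f$ must be a unit $cx^m$.
\item You compute $\delta$ explicitly instead. This identifies the exponent as $k+1$ and uses simplicity only to rule out $c=0$. It does require choosing the $\Ga$-coordinate with no $w$-term in Lemma~\ref{lem:triangular-coordinate}, which is available because $G=T\ltimes\Ga$ splits. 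The paper's $\delta$-simplicity argument does not depend on that choice.
\end{itemize}

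The unipotent and torus cases, the unit-group invariants separating the three types, and the $\mathrm{GL}_2(\mathbb Z)$ determinant argument for $q\leftrightarrow q^{-1}$ all match the paper. You also make explicit why nondegeneracy forces $q$ not to be a root of unity, which the paper leaves implicit. You assert rather than prove the unit groups: scalars in $\mathcal A_1(\C)$, $\C^\times x^{\mathbb Z}$ in $\D(\Gm)$, and scalar monomials in the quantum torus. The paper supplies short filtration and ordering arguments for these.
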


\begin{proof}
Let
$r:=\dim T$ and $n:=\dim U$ with $r+n=2$
(the cases \(r=0\) and \(r=2\) are respectively the unipotent and
torus cases).

Suppose \(r=n=1\).  Since
$
H^2(\mathbb Z,\mathbb C^\times)=0$, it follows from 
Theorem~\ref{thm:solvable-ore-structure} that
$\Os(G)_J
\cong
\mathbb C[x^{\pm1}][z;\delta]$
for a derivation
$\delta=f(x)\frac{d}{dx}$. 
By simplicity, \(\mathbb C[x^{\pm1}]\) is
\(\delta\)-simple. The case $f=0$ gives a central polynomial variable and a nonzero proper
ideal, so it is excluded by simplicity.

If \(f\ne 0\) is a nonunit, then the proper
ideal \((f)\) is \(\delta\)-stable, since
$
\delta(f)=ff'$.
Thus, $f(x)=c x^m$ for some 
$c\neq0$ and $m\in\mathbb Z$.
Letting
$
E:=c^{-1}x^{1-m}z
$
gives $Ex-xE=x$,
so \(\Os(G)_J\cong\D(\mathbb G_m)\).

Let $\mathcal T_q:=\C\langle x^{\pm1},y^{\pm1}\rangle/
(xy-qyx)$. Its units are exactly the scalar monomials $cx^ay^b$.
Indeed, a translation-invariant ordering of $\mathbb Z^2$ shows that a finite
support with more than one term cannot have a multiplicative inverse with
finite support. Thus an isomorphism
$\mathcal T_{q'}\xrightarrow{\cong}\mathcal T_q$ sends its two standard generators to
$cx^ay^b$ and $ex^ry^s$, where
$\left(\begin{smallmatrix}a&r\\b&s\end{smallmatrix}\right)\in\mathrm{GL}_2(\mathbb Z)$.
Since their commutation factor is $q^{as-br}$, we have
$q'=q^{\pm1}$. Conversely, interchanging the two generators realizes
$q\leftrightarrow q^{-1}$. This proves the parameter assertion.

Finally, these three types are pairwise nonisomorphic. Their groups of
units modulo $\C^\times$ are respectively $0$, $\mathbb Z$, and $\mathbb Z^2$ for
$\mathcal A_1(\C)$, $\mathcal D(\Gm)$, and $\mathcal T_q$.
For the first two assertions, the standard polynomial filtration on the
Weyl algebra and the differential-order filtration on $\mathcal D(\Gm)$ show
that a unit has degree, respectively order, zero. The quantum-torus
assertion was proved above.
\end{proof}

\subsection{Minimal Hopf \texorpdfstring{$2$}{2}-cocycles for \texorpdfstring{$G$}{G}}
Let $T:=G/U$ and choose a splitting
$i:T\hookrightarrow G$, with quotient map $q:G\twoheadrightarrow T$.
Then
\[
 q^*:\Os(T)\hookrightarrow\Os(G),\qquad
 i^*:\Os(G)\twoheadrightarrow\Os(T),\qquad i^*q^*=\id.
\]
Restriction of a Hopf $2$-cocycle to the toral \emph{subalgebra} uses $q^*$ and inflation
of a toral Hopf $2$-cocycle to $G$ uses $i^*$.

Let $\mathcal J(G)$ denote the set of gauge-equivalence classes of Hopf
$2$-cocycles for $\mathscr O(G)$. By Proposition \ref{prop:classical-fiber}, 
restriction to the
canonical semisimple tensor subcategory
$\Rep(T)\subseteq \operatorname{Rep}(G)$ yields a map
\begin{equation}\label{eq:res-ss}
\operatorname{res}_{\mathrm{ss}}:\mathcal J(G)
\longrightarrow
\mathcal J(T)\cong H^2(\Lambda,\mathbb C^\times)
\cong
\operatorname{Hom}(\wedge^2\Lambda,\mathbb C^\times).
\end{equation}
Set
\begin{equation}\label{eq:J0G}
\mathcal J_0(G):=\mathrm{res}_{\mathrm{ss}}^{-1}([\varepsilon\otimes\varepsilon]),
\end{equation}
so that elements of $\mathcal J_0(G)$ correspond to the fiber functors which are standard on the
semisimple tensor subcategory $\Rep(T)\subseteq \Rep(G)$.

Define
\begin{equation}\label{eq:R0g}
\mathcal R_0(\mathfrak g)
:=
\left\{
 r\in\mathfrak g\wedge\mathfrak u\mid 
 \operatorname{CYB}(r)=0\right\}.
\end{equation}
Fix a choice of the universal quantization construction used in
\cite[Section~3]{EG1}, expressed in the right Hopf $2$-cocycle
convention of \S\ref{sec:prelim-cocycles}. For $r\in\mathcal R_0(\mathfrak g)$, write
$J_f(r)$ for the resulting Hopf $2$-cocycle for $G$; the subscript $f$ records
this fixed choice. Its algebraic definition follows from nilpotency since
on every tensor product of finite-dimensional rational representations,
the universal series terminates.

\begin{theorem}\label{thm:solvable-torus-trivial}
There is a natural bijection $\mathcal J_0(G)
\cong\mathcal R_0(\mathfrak g)/G$,
where $G$ acts by the adjoint action.  Moreover, the orbit of $r\in g\wedge\mathfrak u$ corresponds to
the universal quantization Hopf $2$-cocycle $J_f(r)$ for $G$.
\end{theorem}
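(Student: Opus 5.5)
We reduce to the classification of unipotent fiber functors in \cite[Theorem~3.2]{EG1}, applied to $\Rep(G)$ relative to its semisimple part $\Rep(T)$. By Proposition~\ref{prop:classical-fiber}, gauge classes of Hopf $2$-cocycles for $\Os(G)$ are the same as isomorphism classes of fiber functors on $\Rep(G)$. A class lies in $\mathcal J_0(G)$ exactly when the fiber functor restricts to the standard fiber functor on $\Rep(T)$. Since $G=T\ltimes U$ and $U$ is unipotent, every object of $\Rep(G)$ has a filtration with quotients in $\Rep(T)$; this is precisely the setting of \cite{EG1}.

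\emph{Step 1: a classification in terms of Lie data.} Fiber functors that are standard on the semisimple part are classified in \cite[Theorem~3.2]{EG1}. The data are $G$-orbits of solutions $r$ of $\CYB(r)=0$ in $\wedge^2\mathfrak g$ whose ``semisimple part'' vanishes. The correspondence sends $r$ to the universal quantization $J_f(r)$. We must check that this vanishing condition, translated to our setting, is exactly $r\in\mathfrak g\wedge\mathfrak u$. Decompose
\[
\wedge^2\mathfrak g=\wedge^2\mathfrak t\oplus(\mathfrak g\wedge\mathfrak u).
\]
The $\wedge^2\mathfrak t$ component of $r$ is the first-order term of the restriction of $J_f(r)$ to $\Os(T)$, namely the toral invariant \eqref{eq:res-ss}. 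We show that requiring this invariant to be trivial forces the component to vanish. Because $\mathfrak g\wedge\mathfrak u$ is the kernel of the projection onto $\wedge^2\mathfrak t$, this subspace is $\operatorname{Ad}(G)$-stable, so $\mathcal R_0(\mathfrak g)/G$ makes sense.

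\emph{Step 2: algebraicity of $J_f(r)$.} For $r\in\mathfrak g\wedge\mathfrak u$, each term of the universal series has positive degree in $\mathfrak u$. These terms act nilpotently on every finite-dimensional rational $G$-module, so the series terminates, as noted before the statement. Hence $J_f(r)$ is a genuine Hopf $2$-cocycle on $\Os(G)$. To see that it lies in $\mathcal J_0(G)$, restrict to $\Os(T)\subseteq\Os(G)$ via $q^*$. The terms containing $\mathfrak u$-factors act by zero on $\Os(T)$, since $U$ acts trivially there. Therefore the restriction is $\varepsilon\otimes\varepsilon$.

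\emph{Step 3: injectivity and surjectivity.} We need that $J_f(r)$ and $J_f(r')$ are gauge equivalent if and only if $r'=\operatorname{Ad}(g)r$ for some $g\in G$. The backward direction holds because the quantization is natural under Lie algebra automorphisms, and inner automorphisms act on fiber functors by isomorphism. For the forward direction and for surjectivity, we invoke \cite[Theorem~3.2]{EG1} directly. That theorem states that a fiber functor on $\Rep(G)$ standard on the reductive quotient is isomorphic to the one given by $J_f(r)$, with $r$ unique up to $G$.

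\emph{Main obstacle.} The hardest point is matching conventions. First, the hypotheses of \cite{EG1} are stated for unipotent or pro-unipotent extensions of reductive categories, and we must confirm they apply to $\Rep(G)$ with semisimple part $\Rep(T)$. Second, the triviality condition in \cite{EG1}, which is triviality on the associated graded, has to be identified with our condition $\mathrm{res}_{\mathrm{ss}}=[\varepsilon\otimes\varepsilon]$. If \cite{EG1} only parametrizes by pairs consisting of $r$ and a toral twist, we would specialize to the trivial toral twist. We would then use Step 2 to see that this specialization is exactly the subset $\mathfrak g\wedge\mathfrak u$.
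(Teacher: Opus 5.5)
Your overall route is the paper's. You identify $\mathcal J_0(G)$ with unipotent fiber functors on $\Rep(G)$, obtain existence and uniqueness up to $G$-conjugacy from \cite[Theorem~3.2]{EG1}, and for the converse check that $J_f(r)$ is algebraic and trivial on $\Rep(T)$; your Step~2 matches this.

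\textbf{The gap is in Step~1.} You try to obtain $r\in\mathfrak g\wedge\mathfrak u$ by deducing that the $\wedge^2\mathfrak t$-component vanishes because the toral invariant \eqref{eq:res-ss} is trivial. That implication is false. The toral invariant lives in $H^2(\Lambda,\C^\times)\cong\Hom(\wedge^2\Lambda,\C^\times)$, and it sees the toral component $r_T$ only through $(\lambda,\mu)\mapsto\exp(r_T(\lambda,\mu))$ on $\Lambda\times\Lambda$.

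For example, take $G=\Gm^2$, so $\mathfrak u=0$ and $\mathcal R_0(\mathfrak g)=\{0\}$. Let $X,Y$ be an integral basis of cocharacters and $r=2\pi i\,X\wedge Y$. The abelian twist $\exp(r/2)$ has trivial commutator bicharacter, hence trivial toral class, although $r\neq0$. This is the same ambiguity of logarithms that appears in the choice of $r_T$ in Theorem~\ref{thm:abelian-realization}.

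There is also no formal parameter here, so the ``first-order term'' of an honest cocycle on $\Os(T)$ is not defined. A condition extracted only from triviality of the toral class would therefore give a set strictly larger than $\mathcal R_0(\mathfrak g)$. On that set the map to $\mathcal J_0(G)$ is not injective: in the example, $0$ and $2\pi i\,X\wedge Y$ are distinct orbits with the same class.

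\textbf{The fix.} The condition must come from the nilpotence hypothesis built into \cite[Theorem~3.2]{EG1} (see \cite[Definition~2.13]{EG1}), and this is what the paper uses. Nilpotence means that the image of $r$ in $(\mathfrak g/\mathfrak u)^{\otimes2}$ vanishes. Since $\mathfrak g\wedge\mathfrak u$ is exactly the kernel of $\wedge^2\mathfrak g\to\wedge^2(\mathfrak g/\mathfrak u)$, this gives $r\in\mathfrak g\wedge\mathfrak u$ directly.

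It is also the right condition, which explains where your argument goes wrong. Write $\mathfrak g=\mathfrak t\oplus\mathfrak u$ and $r=r_T+r'$ with $r'\in\mathfrak g\wedge\mathfrak u$. Then $r'$ strictly raises the filtrations by powers of $\mathfrak u$ on $V\otimes W$, while $r_T$ acts on the $(\lambda,\mu)$ weight component by the scalar $r_T(\lambda,\mu)$. So nilpotent action on all $\C_\lambda\otimes\C_\mu$ forces $r_T=0$ on $\Lambda\times\Lambda$, hence $r_T=0$. With this replacement your Step~1 goes through.

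Your fallback in the last paragraph would not repair the gap by itself. Step~2 only shows that $r\mapsto[J_f(r)]$ sends $\mathcal R_0(\mathfrak g)/G$ into $\mathcal J_0(G)$. It does not show that every torus-trivial class comes from some $r\in\mathfrak g\wedge\mathfrak u$.
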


\begin{proof}
Since a class belongs to $\mathcal J_0(G)$ precisely when its fiber functor on $\Rep(T)=\operatorname{Rep}(G/U)$
is standard, it is a unipotent fiber
functor in the sense of \cite{EG1}.  Applying the classification of
unipotent fiber functors to the symmetric tensor category
$\operatorname{Rep}(G)$ gives a skew solution
$r\in\wedge^2\mathfrak g$ of the classical Yang--Baxter equation,
unique up to $G$-conjugacy.  The nilpotence condition in that theorem is
exactly the vanishing of the image of $r$ in
$(\mathfrak g/\mathfrak u)^{\otimes2}$, hence
$r\in\mathfrak g\wedge\mathfrak u$.  

Conversely, the universal
quantization formula for such an $r$ defines a Hopf $2$-cocycle
$J_f(r)$ for $G$, which is trivial on $\Rep(T)=\operatorname{Rep}(G/U)$.
\end{proof}

For a skew solution $r$ of the CYBE, its {\bf carrier}
$\mathfrak q_r:=\im\bigl(r^\sharp:\mathfrak g^*\to\mathfrak g\bigr)$ 
is a Lie subalgebra, and $r$ is nondegenerate on
$\mathfrak q_r$.  Let $Q_r\subseteq G$ be the {\bf smallest closed connected} affine algebraic
subgroup whose Lie algebra contains $\mathfrak q_r$; equivalently,
$\Lie(Q_r)$ is the {\bf algebraic hull} of $\mathfrak q_r$ in $\mathfrak g$.

\begin{proposition}
\label{prop:support-hull}
For every $r\in\mathcal R_0(\mathfrak g)$, the support of
$J_f(r)$ is conjugate to $Q_r$.  Thus,
$J_f(r)$ is minimal for $G$ if and only if $Q_r=G$.
\end{proposition}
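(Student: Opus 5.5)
Two inclusions are needed, up to conjugacy: $\Supp J_f(r)\subseteq Q_r$ and $Q_r\subseteq\Supp J_f(r)$. The minimality statement then follows from \eqref{eq:minimal-support}, since $G$ is connected and $Q_r$ is connected.

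\emph{Step 1: $\Supp J_f(r)\subseteq Q_r$.} Since $r\in\wedge^2\mathfrak q_r\subseteq\wedge^2\Lie(Q_r)$ and $r$ satisfies the CYBE there, the universal quantization of \cite[Section~3]{EG1} can be performed inside $Q_r$. It is given by a universal series in $r$, so it lies in a completion of $U(\Lie Q_r)^{\otimes2}$. Because $r\in\mathfrak g\wedge\mathfrak u$, the carrier is spanned by elements acting nilpotently on $\mathfrak u$-directions, and the series terminates on finite-dimensional rational representations. Hence $J_f(r)=J_f^{Q_r}(r)\circ(\pi\otimes\pi)$ is literally inflated from a Hopf $2$-cocycle on $\Os(Q_r)$. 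By the uniqueness part of the support theorem \cite[Theorem~3.1]{G}, the support of $J_f(r)$ is then conjugate to a closed subgroup of $Q_r$, namely the support of $J_f^{Q_r}(r)$.

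\emph{Step 2: reduce to minimality on $Q_r$.} It remains to show that $J_f^{Q_r}(r)$ is minimal for $Q_r$, i.e.\ that the $R$-form $R^J$ on $\Os(Q_r)$ is nondegenerate. The key relation is that, to first order in the deformation parameter, $R^J=1+r+O(\hbar^2)$ after rescaling. More structurally, the cotriangular Hopf algebra $\Os(Q_r)^J$ has an $R$-form whose ``image'' is the Hopf subalgebra of $U(\Lie Q_r)$ (suitably completed) generated by the legs of $R$, following the classical Etingof--Gelaki argument for minimal triangular structures. The legs of $R^J$ generate a Hopf subalgebra whose associated Lie algebra contains $\mathfrak q_r$, since the linear term recovers $r^\sharp(\mathfrak g^*)$.

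\emph{Step 3: pass to the support subgroup.} Let $K:=\Supp J_f(r)$, chosen inside $Q_r$ by Step 1, so that $J$ is gauge equivalent to a cocycle inflated from $K$. Then $R^J$ is gauge equivalent to a form factoring through $\Os(K)\otimes\Os(K)$. Gauge equivalence by a cocycle coboundary conjugates the $R$-matrix by $\mathrm{Ad}(x)$ for $x$ in a completion of $U(\mathfrak g)$. Comparing first-order terms, $r$ becomes conjugate under $G$ to an element of $\wedge^2\Lie(K)$. Therefore $g\mathfrak q_rg^{-1}\subseteq\Lie K$ for some $g$. Since $K$ is closed, $\Lie K$ is algebraic, so $gQ_rg^{-1}\subseteq K$. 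Combined with Step 1 and a dimension and connectedness comparison (subgroups of $Q_r$ conjugate to a group containing $Q_r$ must coincide with a conjugate of $Q_r$), this gives $K$ conjugate to $Q_r$.

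\emph{Main obstacle.} I expect the hardest point to be Step 3: rigorously extracting the first-order term $r$ from a gauge class. The difficulty is that the construction is algebraic rather than formal, so there is no deformation parameter. I would first try to introduce one by rescaling $r\mapsto tr$, which preserves $\mathcal R_0(\mathfrak g)$ and the carrier for $t\neq0$. Theorem~\ref{thm:solvable-torus-trivial} gives that the class determines $r$ up to $G$-conjugacy, so applying it to the $K$-cocycle gives $r'\in\wedge^2\Lie K$. Applying the bijection of Theorem~\ref{thm:solvable-torus-trivial} on $G$ (naturality under inclusion $K\subseteq G$) then shows that $r$ is $G$-conjugate to $r'$. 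This would avoid any explicit first-order analysis.
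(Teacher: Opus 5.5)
Once you replace Step~2 and the first-order comparison with the classification argument from your last paragraph, your route is the paper's. You inflate from $Q_r$, take the support $K\subseteq Q_r$, identify the minimal cocycle on $K$ with some $J_f(r')$, use naturality and uniqueness on $G$ to make $r'$ $G$-conjugate to $r$, and conclude from $gQ_rg^{-1}\subseteq K\subseteq Q_r$.

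\textbf{The gap.} It sits in ``applying it to the $K$-cocycle.'' The classification (Theorem~\ref{thm:solvable-torus-trivial}, i.e.\ \cite[Theorem~3.2]{EG1}) only describes cocycles whose fiber functor is unipotent, i.e.\ whose restriction to the diagonalizable quotient $K/(K\cap U)$ is gauge trivial. The support theorem only tells you that the minimal cocycle $K_0$ on $\Os(K)$ inflates to something gauge equivalent to $J_f(r)$. A priori $K_0$ could carry a nontrivial toral part, and then it is not of the form $J_f(r')$ at all. You must prove torus-triviality, and this is exactly where solvability enters. The paper's argument runs as follows:
\begin{enumerate}
\item The alternating bicharacter of a cocycle on grouplikes is a gauge invariant.
\item The bicharacter of $K_0$ on $X(K)$, pulled back along restriction $X(G)\to X(K)$, is that of $J_f(r)$, which is trivial.
\item By Lemma~\ref{lem:characters} this restriction is surjective, so the bicharacter of $K_0$ on $X(K)=X(K/(K\cap U))$ is trivial.
\item Since $\C^\times$ is divisible, Hopf $2$-cocycles on a diagonalizable group are classified up to gauge by their alternating bicharacters. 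Hence $K_0$ is gauge trivial on the toral quotient, and its fiber functor on $\Rep(K)$ is unipotent.
\end{enumerate}

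\textbf{Smaller corrections.} First, Theorem~\ref{thm:solvable-torus-trivial} is stated for connected solvable groups, but $K$ is only a closed subgroup of $Q_r$ and is not known to be connected until the end. Apply \cite[Theorem~3.2]{EG1} to $\Rep(K)$ directly, as the paper does, to obtain $s\in\Lie(K)\wedge\Lie(K\cap U)$. Second, delete Step~2 and the first-order comparison in Step~3 rather than repairing them. There is no deformation parameter, and a gauge transformation is an arbitrary invertible element of $\Os(G)^*$, not an element of a completion of $U(\mathfrak g)$. So ``$R^J=1+r+O(\hbar^2)$'' and ``comparing first-order terms'' have no meaning in this algebraic setting. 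Finally, in Step~1 the carrier generally contains semisimple toral elements (e.g.\ $H$ for $r=H\wedge E$), so it is not spanned by nilpotently acting elements. The relevant fact is $r\in\Lie(Q_r)\wedge\Lie(Q_r\cap U)$ with $Q_r\cap U=R_u(Q_r)$, which again comes from Lemma~\ref{lem:characters}.
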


\begin{proof}
The universal quantization formula uses only the carrier's enveloping algebra and factors through $\Os(Q_r)$.
By Lemma \ref{lem:characters}, the unipotent radical of $Q_r$ is $Q_r\cap U$. Since
$r\in\wedge^2\Lie(Q_r)$ and its projection to
$\wedge^2(\mathfrak g/\mathfrak u)$ vanishes, we have
\[
 r\in\Lie(Q_r)\wedge\Lie(Q_r\cap U).
\]
Thus the same nilpotent universal quantization defines a cocycle on
$Q_r$ itself, and its inflation is $J_f(r)$ on $G$.
Applying the support theorem \cite[Theorem 3.1]{G} within $Q_r$, choose a
closed subgroup $S\subseteq Q_r$ and a minimal Hopf $2$-cocycle $K$ on $\Os(S)$ whose
inflation to $G$ is gauge equivalent to $J_f(r)$.

We claim that $K$ has trivial restriction to the diagonalizable quotient
$D:=S/(S\cap U)$. Its toral commutator bicharacter pulls back, along the
surjection $X(G)\twoheadrightarrow X(S)=X(D)$ of Lemma \ref{lem:characters}, to the toral
commutator bicharacter of $J_f(r)$, which is trivial. Hence the bicharacter
of the restriction of $K$ is trivial. Since $D$ is diagonalizable and
$\C^\times$ is divisible, the gauge class of a Hopf $2$-cocycle on $\Os(D)$ is
classified by its alternating bicharacter. The restriction of $K$ is
therefore gauge trivial.

The fiber functor corresponding to $K$ on $\Rep(S)$ is thus
unipotent in the sense of \cite[Definition~3.1]{EG1}. By
\cite[Theorem~3.2]{EG1}, it is represented by $J_f(s)$ for
\[
 s\in\Lie(S)\wedge\Lie(S\cap U),\qquad\CYB(s)=0.
\]
Naturality of the universal quantization under $S\hookrightarrow G$ shows
that the inflation of this Hopf $2$-cocycle is $J_f(s)$ on $G$. Uniqueness in the same
theorem, now applied to $G$, implies that $s$ is $G$-conjugate to $r$.
The algebraic hull of the carrier of $s$ is contained in $S$, so
$\dim Q_r\leq\dim S$. Since $S\subseteq Q_r$ and $Q_r$ is connected, it
follows that $S=Q_r$. This proves the support assertion up to conjugacy.
\end{proof}

\begin{corollary}\label{cor:quasi-frobenius-hull}
If $J_f(r)$ is minimal, then its carrier $\mathfrak q_r$ is an
even-dimensional quasi-Frobenius Lie algebra whose algebraic hull is
$\mathfrak g$.  If $\mathfrak q_r$ is itself algebraic, then
$\mathfrak q_r=\mathfrak g$,
so $G$ admits a left-invariant algebraic symplectic form, and $\dim G$ is
even.  In particular, this recovers \cite[Theorem 5.1]{G} when $G$ is
unipotent, since every Lie subalgebra of a unipotent algebraic group is
algebraic. \qed
\end{corollary}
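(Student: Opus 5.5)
The argument reads both assertions of Corollary~\ref{cor:quasi-frobenius-hull} off Proposition~\ref{prop:support-hull}. First, if $J_f(r)$ is minimal, Proposition~\ref{prop:support-hull} gives $Q_r=G$. By the definition of $Q_r$, this says that the algebraic hull of $\mathfrak q_r$ is $\Lie(G)=\mathfrak g$.

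For the quasi-Frobenius claim, I use the standard fact recalled just before the proposition: for a skew CYB solution $r$, the carrier $\mathfrak q_r=\im(r^\sharp)$ is a Lie subalgebra on which $r$ is nondegenerate. Concretely, $r$ gives a nondegenerate element of $\wedge^2\mathfrak q_r$. Inverting it yields a nondegenerate $\omega\in\wedge^2\mathfrak q_r^*$, and $\CYB(r)=0$ translates into $d\omega=0$, i.e.\ the Chevalley--Eilenberg $2$-cocycle condition. So $(\mathfrak q_r,\omega)$ is quasi-Frobenius. Even dimensionality follows because a nondegenerate skew form exists only on an even-dimensional space.

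If moreover $\mathfrak q_r$ is algebraic, it equals its own algebraic hull, so $\mathfrak q_r=\mathfrak g$. Then $\omega$ is a nondegenerate closed $2$-form on $\mathfrak g$. Left translation turns it into a left-invariant $2$-form on $G$, which is regular (hence algebraic), closed (since $d$ on invariant forms is the Chevalley--Eilenberg differential) and nondegenerate. In particular $\dim G=\dim\mathfrak q_r$ is even.

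For the unipotent case, let $G=U$. Then $\mathfrak g=\mathfrak u$, so $\mathfrak g\wedge\mathfrak u=\wedge^2\mathfrak g$ and the torus part is trivial; by Theorem~\ref{thm:solvable-torus-trivial} every Hopf $2$-cocycle class is some $J_f(r)$. Every Lie subalgebra of $\Lie(U)$ is algebraic, since $\exp$ identifies subalgebras with closed connected subgroups. Hence a minimal $J$ has $\mathfrak q_r=\mathfrak g$, and $G$ is even-dimensional and carries an invariant symplectic form, which is \cite[Theorem 5.1]{G}.

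The only step needing care is the translation of $\CYB(r)=0$ into closedness of $\omega$ on the carrier. This is the classical Drinfeld correspondence, and I would cite it rather than recompute it.
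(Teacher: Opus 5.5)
Your proposal is correct and follows the same route the paper implicitly takes. Proposition~\ref{prop:support-hull} turns minimality into $Q_r=G$, so the algebraic hull of $\mathfrak q_r$ is $\mathfrak g$; the remaining claims follow from the standard Drinfeld correspondence between skew CYB solutions nondegenerate on their carrier and quasi-Frobenius structures, together with the fact that subalgebras are algebraic in the unipotent case. The paper records the corollary with no written argument, so your write-up, including the left-invariant extension of $\omega$ and the use of Theorem~\ref{thm:solvable-torus-trivial} for unipotent $G$, is more detailed than the original.
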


\begin{example}
The even-dimensional conclusion does not extend to arbitrary solvable
or even nilpotent groups.  The failure is exactly the possible gap
between a Lie algebra and its algebraic hull.

For example, let $G=\Gm^2\times\Ga$. Then $\mathfrak g=\C X\oplus\C Y\oplus\C Z$ and $\mathfrak u=\C Z$,
with $X,Y$ corresponding to an integral basis of the cocharacter lattice.
Choose $\alpha\in\C\setminus\mathbb Q$, and let
$r:=(X+\alpha Y)\wedge Z$.
The Lie algebra is abelian, so $\CYB(r)=0$, 
$r\in\mathfrak g\wedge\mathfrak u$, and its carrier is the
symplectic Lie algebra
\[
   \mathfrak q_r:=\C(X+\alpha Y)\oplus\C Z.
\]
The toral line $\C(X+\alpha Y)$ is not rational with respect to the
cocharacter lattice, so its algebraic hull is all of
$\C X\oplus\C Y$.  Thus $Q_r=G$. By Proposition~\ref{prop:support-hull},
$J_f(r)$ is minimal, although $\dim G=3$. \qed
\end{example}

\subsection{Invariant Hopf \texorpdfstring{$2$}{2}-cocycles and the centrally split sector}

Let $Z=Z_r\times Z_u$ be the {\bf center} of $G$, where $Z_r$ is reductive and $Z_u$ is unipotent.  Let
$\mathfrak z_r:=\Lie(Z_r)$ and $\mathfrak z_u:=\Lie(Z_u)$.

A Hopf $2$-cocycle $L$ is invariant if $L*m=m*L$. Two invariant
Hopf $2$-cocycles are {\bf strongly} gauge equivalent if they are related by
\[
 L^x=(x\circ m)*L*(x^{-1}\otimes x^{-1})
\]
for a normalized convolution-invertible cochain $x\in\Os(G)^*$ that
is central in the convolution algebra $\Os(G)^*$. We write
$H^2_{\mathrm{inv}}(G)$ for the group of strong gauge classes of
invariant Hopf $2$-cocycles, as in \cite[Section~2]{EG2}. Ordinary gauge equivalence
allows arbitrary normalized convolution-invertible $x$ and is the
equivalence relation defining $\mathcal J(G)$.

For a normalized invertible cochain $x\in\Os(G)^*$, we use the right-convention
gauge formula
\[
 J^x=(x\circ m)*J*(x^{-1}\otimes x^{-1}).
\]
If $L$ is invariant, then $L$ commutes with $x\circ m$, hence
$L*J^x=(L*J)^x$.

\begin{theorem}
\label{thm:invariant-solvable}
There is a natural group isomorphism
\[
   H^2_{\mathrm{inv}}(G)
   \cong
   \Hom(\wedge^2\widehat{Z_r},\C^\times)
   \times(\mathfrak z_r\otimes\mathfrak z_u)
   \times(\wedge^2\mathfrak u)^G.
\]
\end{theorem}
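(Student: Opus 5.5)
The plan is to deduce the statement from the general computation of invariant Hopf $2$-cocycles in \cite[Theorem~7.8]{EG2}. That result expresses $H^2_{\mathrm{inv}}(G)$ for an arbitrary affine algebraic group over $\C$ in terms of three pieces: the reductive part of the center, the mixed reductive--unipotent central part, and the $G$-invariant skew tensors on the Lie algebra of the unipotent radical satisfying a Yang--Baxter-type condition. The work is to specialize each piece to connected solvable $G=T\ltimes U$ and check that it takes the stated form.

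First, I would decompose the center as $Z=Z_r\times Z_u$, with $Z_r$ diagonalizable (since $G/U=T$ is a torus, $Z_r$ embeds in $T$) and $Z_u=Z\cap U$. Invariant cocycles restricted to $\Rep(Z_r)$-type gradings give the factor $\Hom(\wedge^2\widehat{Z_r},\C^\times)$. This uses that for a diagonalizable group gauge classes are classified by alternating bicharacters, because $\C^\times$ is divisible; this is the same argument as in the proof of Proposition~\ref{prop:support-hull}.

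Second, the unipotent part. By \cite[Theorem~7.8]{EG2}, invariant classes trivial on the reductive part correspond to $G$-invariant elements $r\in\wedge^2\Lie(G)$ solving the CYBE with nilpotent image, modulo the mixed central term. Here I would use Theorem~\ref{thm:solvable-torus-trivial}: such classes lie in $\mathcal J_0(G)$ and correspond to $r\in\mathfrak g\wedge\mathfrak u$ with $\CYB(r)=0$. Invariance forces $r$ to be $\Ad(G)$-fixed, hence $\ad$-invariant. The $\ad$-invariant elements of $\mathfrak g\wedge\mathfrak u$ split as $(\mathfrak z_r\otimes\mathfrak z_u)\oplus(\wedge^2\mathfrak u)^G$. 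To see this, decompose $\mathfrak g=\mathfrak t\oplus\mathfrak u$ into $T$-weights: $T$-invariance of the $\mathfrak t\otimes\mathfrak u$ component puts it in $\mathfrak t\otimes\mathfrak u^T$, and invariance under $\mathfrak u$ then forces the $\mathfrak t$-factor to be central and the $\mathfrak u$-factor to be central. For invariant $r$ the CYBE holds automatically: $\CYB(r)$ is a sum of brackets $[r_{12},r_{13}]$ and the like, which vanish when one leg is central, and for $r\in(\wedge^2\mathfrak u)^G$ invariance gives $[r_{12},r_{13}+r_{23}]=0$, as in \cite{EG2}.

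Third, I would check that the group structures match: convolution of invariant cocycles corresponds to multiplication of bicharacters and addition of the $r$'s, by the product formula for commuting quantizations. The expected main obstacle is the splitting into a product. Showing that a general invariant class factors as a toral invariant cocycle convolved with an inflated unipotent one requires $Z_r$ to be a direct factor in the relevant sense, and care is needed because $T$ itself is not central. I would first try to use invariance to show that the restriction of an invariant class to $\Rep(T)$ factors through $\widehat{Z_r}$, and then split it using a splitting $X(T)\twoheadrightarrow\widehat{Z_r}$ from Lemma~\ref{lem:characters}.
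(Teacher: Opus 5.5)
Your top-level strategy is the paper's. Its proof is the one-line specialization of \cite[Theorem~7.8]{EG2}, which already has exactly this three-factor form with $\mathfrak u$ the Lie algebra of the unipotent radical. So the only input needed is that $R_u(G)=U$ for connected solvable $G$. In particular, the ``main obstacle'' you anticipate (splitting an arbitrary invariant class into a toral piece times a unipotent piece) is part of what the cited theorem already provides, and need not be redone.

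If your second and third paragraphs are meant as the actual verification, they do not work as written.

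\emph{Gauge equivalence.} Theorem~\ref{thm:solvable-torus-trivial} classifies \emph{ordinary} gauge classes, while $H^2_{\mathrm{inv}}(G)$ is taken modulo \emph{strong} gauge equivalence. To use it you would need injectivity of $H^2_{\mathrm{inv}}(G)\to\mathcal J(G)$ on the torus-trivial part. Moreover, ``invariance forces $r$ to be $\operatorname{Ad}(G)$-fixed'' does not follow. Every ordinary gauge class is already conjugation-stable, since conjugating by $g\in G$ is the gauge transformation by the grouplike $g$. So the fact that the class of $J_f(r)$ contains an invariant representative cannot be turned into $\operatorname{Ad}(g)r=r$ by any property of gauge classes. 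You need a separate argument extracting an invariant classical $r$-matrix from an invariant cocycle, which is exactly what \cite{EG2} supplies.

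\emph{The Lie-algebra step.} Write the $\mathfrak t\wedge\mathfrak u$ part of $r$ as $\sum_i t_i\wedge u_i$ with independent legs. The $\mathfrak t\wedge\mathfrak u$ component of $\ad_x r=0$ only gives $u_i\in\mathfrak z_u$. The $t_i$ enter only through $\sum_i[x,t_i]\wedge u_i=-\ad_x r_2$ in $\wedge^2\mathfrak u$, which does not by itself force $t_i\in\mathfrak z_r$. The conclusion is true, but it needs an argument such as the following. The carrier of an invariant skew tensor is an ideal carrying an invariant nondegenerate skew form, hence is abelian. So a carrier element whose toral part is a nonzero multiple of $t_i$ has square-zero $\ad$, and this forces every $T$-weight of $\mathfrak u$ to vanish on $t_i$. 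The same fact is what gives $\CYB(r)=0$: your identity $[r_{12},r_{13}+r_{23}]=0$ only reduces $\CYB(r)$ to $[r_{13},r_{23}]$.

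\emph{The toral factor.} $\widehat{Z_r}$ can have torsion; for $\Gm$ acting on $\Ga$ with weight $2$ one gets $Z_r=\mu_2$. So $X(T)\twoheadrightarrow\widehat{Z_r}$ need not split, and the toral factor should instead be realized by inflation along $\Os(G)\twoheadrightarrow\Os(Z_r)$, which is invariant because $Z_r$ is central. Your converse claim, that the toral restriction of an arbitrary invariant class is supported on $Z_r$, is left unproved.
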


\begin{proof}
This is the specialization of \cite[Theorem 7.8]{EG2} to a connected
solvable affine algebraic group, for which the unipotent radical of $G$ is $U$.
\end{proof}

Call
$[\sigma]\in H^2(\Lambda,\mathbb C^\times)$ {\bf centrally supported} if its
alternating bicharacter is supported on $Z_r$.  For a centrally supported toral class $[\sigma]$, let
$A_\sigma\subseteq Z_r$ be its diagonalizable support. Choose $L_\sigma$
to be the inflation of a cocycle on $\Os(A_\sigma)$ representing this toral
class. In particular, $L_\sigma$ has no mixed or unipotent component in the
decomposition of Theorem \ref{thm:invariant-solvable}. We call this a {\bf pure toral lift}.

\begin{theorem}\label{thm:centrally-supported-fibers}
For every centrally supported $[\sigma]\in H^2(\Lambda,\mathbb C^\times)$, multiplication by $L_\sigma$
gives a bijection
\[
\mathcal R_0(\mathfrak g)/G
\xrightarrow{\cong}
\operatorname{res}_{\mathrm{ss}}^{-1}([\sigma]),
\qquad
[r]\longmapsto[L_\sigma*J_f(r)].
\]

Moreover,
$
\operatorname{Supp}(L_\sigma*J_f(r))
=
A_\sigma Q_r$ (up to conjugacy),
so $
L_\sigma*J_f(r)$ is minimal if and only if 
$A_\sigma Q_r=G$.
\end{theorem}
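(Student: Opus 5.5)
The plan is to reduce to Theorem~\ref{thm:solvable-torus-trivial} by showing that convolution with the invariant cocycle $L_\sigma$ is a well-defined bijection on gauge classes that shifts the toral invariant by $[\sigma]$.

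\emph{Step 1: $L_\sigma*J$ is a Hopf $2$-cocycle.} Since $L_\sigma$ is invariant, i.e.\ $L_\sigma*m=m*L_\sigma$, the product $L_\sigma*J$ of an invariant cocycle with any Hopf $2$-cocycle $J$ is again a Hopf $2$-cocycle. This is the standard fact from \cite[Section~2]{EG2}.

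\emph{Step 2: compatibility with gauge equivalence.} By the displayed identity $L*J^x=(L*J)^x$, the map $[J]\mapsto[L_\sigma*J]$ is well defined on $\mathcal J(G)$. Its inverse is $[J]\mapsto[L_\sigma^{-1}*J]$, and $L_\sigma^{-1}$ is again an invariant cocycle, namely the inflation of the inverse class on $\Os(A_\sigma)$. Hence the map is a bijection of $\mathcal J(G)$.

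\emph{Step 3: effect on the toral invariant.} Restricted to $q^*\Os(T)=\C[\Lambda]$, which is spanned by grouplikes, convolution becomes pointwise multiplication of functions on $\Lambda\times\Lambda$. So $\operatorname{res}_{\mathrm{ss}}(L_\sigma*J)=\operatorname{res}_{\mathrm{ss}}(L_\sigma)\cdot\operatorname{res}_{\mathrm{ss}}(J)$. Since $A_\sigma\subseteq Z_r$ is central and diagonalizable, characters of $G$ restricted to $A_\sigma$ factor through $\Lambda\twoheadrightarrow X(A_\sigma)$. By construction, the bicharacter of $L_\sigma$ on $\Lambda$ is the pullback of that of $\sigma$, so $\operatorname{res}_{\mathrm{ss}}(L_\sigma)=[\sigma]$. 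One subtlety must be checked here: a centrally supported alternating bicharacter on $\Lambda$ factors through $X(A_\sigma)$, where $A_\sigma$ is the intersection of the kernels of its radical characters. This follows from Lemma~\ref{lem:characters} applied to $S=A_\sigma$ together with the fact that classes in $H^2$ of a lattice are determined by their alternating bicharacters. Therefore the bijection of Step 2 maps $\mathcal J_0(G)$ onto $\operatorname{res}_{\mathrm{ss}}^{-1}([\sigma])$. Composing with Theorem~\ref{thm:solvable-torus-trivial} gives the first assertion.

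\emph{Step 4: support.} $L_\sigma$ factors through $\Os(A_\sigma)$. By the proof of Proposition~\ref{prop:support-hull}, after gauge $J_f(r)$ factors through $\Os(Q_r)$. Since $A_\sigma$ is central, the multiplication map $A_\sigma\times Q_r\to A_\sigma Q_r$ is a surjective homomorphism, and $L_\sigma*J_f(r)$ is the inflation of $L_\sigma\boxtimes J_f(r)$ restricted along $\Os(A_\sigma Q_r)\hookrightarrow\Os(A_\sigma\times Q_r)$. Hence the support is contained in $A_\sigma Q_r$ up to conjugacy.

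The reverse inclusion is the main obstacle. Let $S$ be the support and $K$ the minimal cocycle on $S$. The toral restriction of $K$ is the image of $[\sigma]$. Because $L_\sigma^{-1}$ is invariant and central, we can form $L_\sigma^{-1}*K$ restricted to $S$; this requires $A_\sigma\subseteq S$. To obtain that inclusion, first show that the bicharacter of $[\sigma]$ is nondegenerate on $X(A_\sigma)$, so its support forces $A_\sigma\subseteq S$ via Lemma~\ref{lem:characters}. Then $L_\sigma^{-1}*K$ is torus-trivial on $S$, hence equals $J_f(s)$ with $s$ conjugate to $r$ by \cite[Theorem~3.2]{EG1}. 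This gives $Q_r\subseteq S$ up to conjugacy, as in Proposition~\ref{prop:support-hull}, and therefore $A_\sigma Q_r\subseteq S$.

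The minimality criterion then follows from \eqref{eq:minimal-support}.
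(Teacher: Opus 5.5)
Your overall route is the paper's. You convolve with the invariant lift $L_\sigma$ to reduce to Theorem~\ref{thm:solvable-torus-trivial}. You get $\Supp\subseteq A_\sigma Q_r$ from the factorization through the multiplication map $A_\sigma\times Q_r\to A_\sigma Q_r$, which you justify more explicitly than the paper does. For the reverse inclusion you strip $L_\sigma^{-1}$ off inside the support $S$ and rerun Proposition~\ref{prop:support-hull}.

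\textbf{The gap} is the step you yourself single out: $A_\sigma\subseteq S$ does not follow from Lemma~\ref{lem:characters}. The toral restriction of $K$ together with that lemma only shows that the bicharacter of $[\sigma]$ factors through $X(G)\twoheadrightarrow X(S)=X(q(S))$. Hence its radical contains the annihilator of $q(S)$, which gives $A_\sigma\subseteq q(S)$. That is an inclusion into the \emph{image} of $S$ in $T$, not into $S$ itself. In general this is not enough: in $\Gm\ltimes\Ga$ with nontrivial action, a conjugate $uTu^{-1}\neq T$ surjects onto $G/U$ but does not contain $T$.

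\textbf{The missing ingredient} is the centrality of $A_\sigma$. The paper uses it as follows. It takes a diagonalizable Levi subgroup of $S$, which maps isomorphically onto $q(S)$, and conjugates it into $T$. Since $q|_T$ is injective, the conjugated subgroup equals $q(S)\supseteq A_\sigma$, and conjugating back fixes the central subgroup $A_\sigma$.

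A shorter fix uses Jordan decomposition. For $a\in A_\sigma$, pick $s\in S$ with $q(s)=q(a)$, so $s=au$ with $u\in U$. Since $a$ is central and semisimple, $s=au=ua$ is the Jordan decomposition of $s$. Closed subgroups contain the Jordan components of their elements, so $a\in S$.

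\textbf{A smaller omission at the end.} Both of your inclusions hold only up to conjugacy. What you actually obtain is
\[
g(A_\sigma Q_r)g^{-1}\subseteq S\subseteq A_\sigma Q_r\qquad\text{for some } g\in G,
\]
using centrality of $A_\sigma$ for the first inclusion. To conclude $S=A_\sigma Q_r$ you still need that a closed subgroup containing a conjugate of itself equals it. Equal dimensions force equal identity components, and then the finite component groups have the same order and coincide. The paper makes this dimension-and-components argument explicitly.
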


\begin{proof}
By \cite[Lemma 2.1]{EG2}, invariant Hopf $2$-cocycles act by convolution on arbitrary Hopf $2$-cocycles.  Thus, if $J$ restricts to $[\sigma]$, then
$L_\sigma^{-1}*J$ is a torus-trivial Hopf $2$-cocycle and hence gauge
equivalent to $J_f(r)$ for a unique $G$-orbit of $r$.  

Choose a support subgroup $S\subseteq A_\sigma Q_r$ for $
L_\sigma*J_f(r)$. Since its toral restriction factors through the image of $S$ in $T$, $q(A_\sigma)\subseteq q(S)$. A diagonalizable Levi subgroup of
$S$ maps isomorphically to $q(S)$ and can be conjugated into $T$ inside the
connected solvable group $G$. Since $A_\sigma$ is central, this implies
$A_\sigma\subseteq S$ before as well as after the conjugation.
Thus $L_\sigma^{-1}$ is a Hopf $2$-cocycle supported on $S$. Removing it within $S$
shows that a Hopf $2$-cocycle gauge equivalent to $J_f(r)$ is supported on $S$.
By Proposition \ref{prop:support-hull}, a conjugate of $Q_r$ is contained in $S$.
Centrality of $A_\sigma$ now gives a conjugate of $A_\sigma Q_r$ contained in $S$.
Together with $S\subseteq A_\sigma Q_r$, equality follows since the dimensions are equal,
and the two copies of $A_\sigma Q_r$ have the same finite number of connected components.
Hence $S=A_\sigma Q_r$, up to the support conjugacy already chosen.
\end{proof}

For an arbitrary toral class \([\sigma]\in H^2(\Lambda,\C^\times)\), inflate it to a Hopf \(2\)-cocycle
\(\widetilde\sigma\) for \(G\).

\begin{proposition}\label{prop:relative-toral-reduction}
Fix a normalized cocycle $\sigma$ on $\Os(T)$ and let
$\widetilde\sigma:=\sigma\circ(i^*\otimes i^*)$.
There is a bijection between the normalized Hopf $2$-cocycles $J$ on $\Os(G)$ with
$J|_{q^*\Os(T)^{\otimes2}}=\sigma$ and the
normalized Hopf $2$-cocycles $K$ on $\Os(G)^{\widetilde\sigma}$ with trivial restriction to the same
toral subalgebra. The maps are
\[
 K=\widetilde\sigma^{-1}*J,\qquad J=\widetilde\sigma*K.
\]
They satisfy
\[
 \left(\Os(G)^{\widetilde\sigma}\right)^K=\Os(G)^J,
 \qquad R^J=K_{21}^{-1}*R^{\widetilde\sigma}*K.
\]
The bijection respects gauge morphisms whose restrictions preserve the fixed
toral representative. Every cocycle whose toral gauge class is $[\sigma]$
admits such a normalized representative.
\end{proposition}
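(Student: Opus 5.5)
The plan is to use the standard fact that twisting by a Hopf $2$-cocycle is "transitive": if $\widetilde\sigma$ is a Hopf $2$-cocycle for $A:=\Os(G)$ and $K$ is a Hopf $2$-cocycle for $A^{\widetilde\sigma}$, then $\widetilde\sigma*K$ is a Hopf $2$-cocycle for $A$ and $(A^{\widetilde\sigma})^K=A^{\widetilde\sigma*K}$. The converse also holds: for any cocycle $J$ on $A$, the map $\widetilde\sigma^{-1}*J$ is a cocycle on $A^{\widetilde\sigma}$. Both statements are Hopf-algebraic identities, checked directly from \eqref{eq:hopf-2-cocycle} and \eqref{eq:two-sided-cocycle-product} (as in Doi's or Majid's treatment of iterated twists). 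I would verify the cocycle identity for $K=\widetilde\sigma^{-1}*J$ in the coalgebra $A^{\widetilde\sigma}=A$ with product $\star_{\widetilde\sigma}$. Expanding $K(a_{(1)}\star b_{(1)},c)K(a_{(2)},b_{(2)})$ makes the $\widetilde\sigma^{\pm1}$ factors telescope, leaving the cocycle identity for $J$ combined with that for $\widetilde\sigma$. The two maps are mutually inverse by associativity of convolution, and normalization is preserved.

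Next I would check the toral compatibility. Since $q^*\Os(T)$ is spanned by grouplikes $X^\lambda$, and $i^*q^*=\id$, we have $\widetilde\sigma(X^\lambda,X^\mu)=\sigma(X^\lambda,X^\mu)$. On grouplikes, convolution is pointwise multiplication, so $K(X^\lambda,X^\mu)=\sigma(X^\lambda,X^\mu)^{-1}J(X^\lambda,X^\mu)$. Hence $J$ restricts to $\sigma$ exactly when $K$ restricts trivially. Note also that $q^*\Os(T)$ is still a Hopf subalgebra of $A^{\widetilde\sigma}$: grouplikes are closed under $\star_{\widetilde\sigma}$ up to scalars, so "the same toral subalgebra" makes sense.

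For the $R$-form, I would compute $R^J=J_{21}^{-1}*J=(\widetilde\sigma*K)_{21}^{-1}*(\widetilde\sigma*K)$. This requires care, because the convolution here is taken in the dual of $A^{\widetilde\sigma}\otimes A^{\widetilde\sigma}$ versus $A\otimes A$, but as coalgebras these are the same, so convolution agrees. Writing $(\widetilde\sigma*K)_{21}^{-1}=K_{21}^{-1}*\widetilde\sigma_{21}^{-1}$ then gives $K_{21}^{-1}*\widetilde\sigma_{21}^{-1}*\widetilde\sigma*K=K_{21}^{-1}*R^{\widetilde\sigma}*K$.

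For gauge compatibility, I would take $x$ normalized with $J^x$ again restricting to $\sigma$. Then $\widetilde\sigma^{-1}*J^x$ should equal the gauge transform of $K$ in $A^{\widetilde\sigma}$, namely $(x\circ m_{\widetilde\sigma})*K*(x^{-1}\otimes x^{-1})$. This is the step I expect to be the main obstacle, since $x\circ m$ and $x\circ m_{\widetilde\sigma}$ differ by conjugation with $\widetilde\sigma$. From \eqref{eq:two-sided-cocycle-product} one gets $x\circ m_{\widetilde\sigma}=\widetilde\sigma^{-1}*(x\circ m)*\widetilde\sigma$, and substituting this yields the identity formally.

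Finally, for a cocycle $J$ whose toral class is $[\sigma]$, $J|_{q^*\Os(T)}$ and $\sigma$ differ by a toral coboundary $\partial y$, with $y:\Lambda\to\C^\times$ and $y(0)=1$. I would extend $y$ to the normalized invertible functional $y\circ i^*$ on $A$, which is convolution-invertible because $i^*$ is a Hopf map. Gauging $J$ by it corrects the toral restriction exactly, since on grouplikes $J^x(X^\lambda,X^\mu)=x(X^{\lambda+\mu})J(X^\lambda,X^\mu)x(X^\lambda)^{-1}x(X^\mu)^{-1}$.
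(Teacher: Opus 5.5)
Your proposal is correct and follows the paper's route: the paper's proof simply invokes the standard composition law $(\Os(G)^{\widetilde\sigma})^K=\Os(G)^{\widetilde\sigma*K}$, the pointwise toral restriction check, and the usual twisting formula for $R^J$, all of which you verify explicitly in the paper's right-cocycle conventions. Your further checks fill in details the paper leaves implicit: gauge compatibility via $x\circ m_{\widetilde\sigma}=\widetilde\sigma^{-1}*(x\circ m)*\widetilde\sigma$, and existence of a representative restricting exactly to $\sigma$, obtained by gauging with $y\circ i^*$.
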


\begin{proof}
This is the standard composition law for Hopf \(2\)-cocycle
twistings.  After choosing a representative of the gauge class of
\(J\), whose restriction to \(\Os(T)\) is \(\sigma\), twisting
first by \(\widetilde\sigma\) and then by
\(K=\widetilde\sigma^{-1}*J\) gives the same deformation as twisting by \(J\).  The restriction of \(K\) to \(\Os(T)\) is trivial,
and the displayed formula for the cotriangular structure is the usual
twisting formula.
\end{proof}

\begin{question}
\label{q:noncentral-torus}
For \(G=T\ltimes U\) and $
[\sigma]\in H^2(\Lambda,\C^\times)$,
whose support is not central in \(G\), classify the Hopf
\(2\)-cocycles \(K\) for \(\Os(G)^{\widetilde\sigma}\) whose restriction to
\(\Os(T)\) is trivial.  For which of them is
$
K_{21}^{-1}*R^{\widetilde\sigma}*K$ nondegenerate? \qed
\end{question}

Choose the {\bf maximal} torus $T$ so that $Z_r\subseteq T$.  Let
\[
   \res_T:H^2_{\mathrm{inv}}(G)\longrightarrow\Jset(T)
\]
be restriction.  The image of $\res_T$ consists precisely of toral cocycle classes
whose support is contained in the central diagonalizable group $Z_r$.

Define the {\bf centrally split sector}
\begin{equation}\label{eq:centrally-split-sector}
   \Jcs(G)
   :=\left\{[J]\in\Jset(G)\mid
      [J|_{\Os(T)^{\otimes 2}}]\in\im(\res_T)\right\},
\end{equation}
and set $K_0:=\ker(\res_T)$.

Recall that by Theorem~\ref{thm:solvable-torus-trivial}, $J_0(G)$ 
is the set of $G$-orbits of elements
$r\in\mathcal R_0(\mathfrak g)$.

\begin{theorem}
\label{thm:central-split}
Multiplication of Hopf $2$-cocycles induces a surjective map
\[
   H^2_{\mathrm{inv}}(G)\times\mathcal J_0(G)
   \longrightarrow\Jcs(G),
   \qquad
   ([L],[K])\longmapsto[L*K],
\]
whose fibers are the orbits of $K_0$ acting by
$[M]\cdot([L],[K])
   :=([L*M^{-1}],[M*K])$.
Thus,
\begin{equation}\label{eq:central-split-quotient}
   \Jcs(G)
   \cong
   \bigl(H^2_{\mathrm{inv}}(G)\times\mathcal J_0(G)\bigr)/K_0.
\end{equation}
\end{theorem}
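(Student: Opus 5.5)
We prove the statement in three steps: the map is well defined, it is surjective, and its fibers are exactly the $K_0$-orbits.

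\emph{Well-definedness.} By \cite[Lemma~2.1]{EG2}, invariant Hopf $2$-cocycles act by convolution on arbitrary Hopf $2$-cocycles. Since $L*J^x=(L*J)^x$ for invariant $L$, the class $[L*K]$ does not depend on the representative of $[K]$. We also need independence of the strong gauge representative of $[L]$. If $L'=L^x$ with $x$ central in $\Os(G)^*$, then $x$ commutes with everything, so $L^x*K=(L*K)^x$. Hence $[L^x*K]=[L*K]$.

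Next we check that the image lies in $\Jcs(G)$. The toral restriction of $L*K$ is $[L|_{\Os(T)}]*[K|_{\Os(T)}]$, because restriction along $q^*$ is multiplicative on convolution of cocycles. Since $[K|_{\Os(T)}]$ is trivial for $[K]\in\mathcal J_0(G)$, this equals $\res_T[L]\in\im(\res_T)$.

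\emph{Surjectivity.} Let $[J]\in\Jcs(G)$. Choose $[L]\in H^2_{\mathrm{inv}}(G)$ with $\res_T[L]=[J|_{\Os(T)}]$. The cocycle $L^{-1}*J$ is again a Hopf $2$-cocycle by \cite[Lemma~2.1]{EG2}, and its toral restriction is trivial. So $[L^{-1}*J]\in\mathcal J_0(G)$, and $J=L*(L^{-1}*J)$. Alternatively, one can take the pure toral lift $L_\sigma$ and apply Theorem~\ref{thm:centrally-supported-fibers}.

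\emph{Fibers.} First, $K_0$ acts and preserves fibers. For $[M]\in K_0$, the class $M*K$ lies in $\mathcal J_0(G)$, since its toral restriction is $\res_T[M]*[K]=1$. Moreover, $(L*M^{-1})*(M*K)=L*K$ by associativity of convolution, so the image is unchanged. Well-definedness of the action on classes follows again from centrality of strong gauges and from $M*K^x=(M*K)^x$.

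Conversely, suppose $[L*K]=[L'*K']$. Set $M:=L^{-1}*L'$. This is invariant, and its restriction to $T$ is $\res_T[L]^{-1}\res_T[L']$. Since $L*K$ and $L'*K'$ are gauge equivalent, their toral restrictions agree, and $K,K'$ restrict trivially. Hence $\res_T[L]=\res_T[L']$, so $[M]\in K_0$. Then $L'=L*M$, and $(L*M)*K'=(L*K)^x$ for some gauge $x$. It remains to show $[K]=[M*K']$. Convolving with $L^{-1}$ and using $L^{-1}*(L*K)^x=K^x$, which holds since $L^{-1}$ is invariant, gives $M*K'=K^x$. So $[M*K']=[K]$. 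Rewriting, $([L'],[K'])=([L*M],[M^{-1}*K])$, which is the $K_0$-action by $[M^{-1}]$. Thus fibers are exactly $K_0$-orbits, and \eqref{eq:central-split-quotient} follows.

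The main obstacle is the multiplicativity of restriction to the toral subalgebra on gauge classes. This needs the fact that the restriction of a gauge transform $J^x$ to $q^*\Os(T)$ is a gauge transform of the restriction; this holds because $q^*\Os(T)$ is a Hopf subalgebra, so we may restrict $x$ to it. It also needs the fact that $\res_T$ is compatible with strong versus ordinary gauge on $T$. Since $\Os(T)$ is commutative, every normalized invertible $x\in\Os(T)^*$ is central there, so these notions coincide on $T$.
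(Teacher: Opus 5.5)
Your proposal is correct and follows essentially the same route as the paper: well-definedness via $L*K^x=(L*K)^x$ and centrality of strong gauges, surjectivity via $K:=L^{-1}*J$, and the fiber description via the invariant quotient of the two $L$'s lying in $K_0$. The differences are cosmetic (you use gauge-trivial rather than literally trivial toral restriction, and $M=L^{-1}*L'$ rather than $L'^{-1}*L$), except for one small correction: the convolution algebra $\Os(T)^*$ is commutative because $\Os(T)=\C[\Lambda]$ is \emph{cocommutative}, not because it is commutative, though your conclusion is right.
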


\begin{proof}
The map is well defined on the stated classes. Indeed,
$L*K^x=(L*K)^x$ for an invariant $L$ and an arbitrary invertible
normalized cochain $x$. If $z$ is central in $\Os(G)^*$, then
$L^z*K=(L*K)^z$,
since $z^{-1}\otimes z^{-1}$ commutes with $K$. Thus changing $L$
within its strong gauge class or $K$ within its ordinary gauge class
does not change the ordinary gauge class of $L*K$.

Let $[J]\in\mathcal J_{\mathrm{cs}}(G)$. By the definition of this
set, choose an invariant Hopf $2$-cocycle $L$ whose restriction to the
toral subalgebra is gauge equivalent to the restriction of $J$.
After changing $J$ by a gauge transformation inflated from $T$, while
keeping $L$ fixed, we may assume that the two restrictions are equal.
By \cite[Lemma~2.1]{EG2}, invariant Hopf $2$-cocycles act by convolution
on arbitrary Hopf $2$-cocycles. Thus,
$K:=L^{-1}*J$
is a Hopf $2$-cocycle with trivial restriction to the toral subalgebra.
Thus $[K]\in\mathcal J_0(G)$ and $[J]=[L*K]$.

Conversely, if $L$ is invariant and $K\in\mathcal J_0(G)$, then $L*K$ is a
Hopf $2$-cocycle for $G$, and its toral restriction lies in $\im(\res_T)$.
This proves surjectivity.

Finally, if $L*K$ and $L'*K'$ are gauge equivalent, then
$M:=L'^{-1}*L$ is invariant and
$[K']=[M*K]$.
Since both $K$ and $K'$ have trivial toral restriction, $M\in K_0$, as desired.
\end{proof}

\begin{corollary}\label{cor:nilpotent-recovery}
If $G=T\times U$ is a connected nilpotent affine algebraic group, then $T$ is central and
$\Jcs(G)=\Jset(G)$. Thus, Theorem~\ref{thm:central-split} recovers \cite[Theorem 5.3]{G}.  For a non-nilpotent
solvable group, the possible non-central support of the toral restriction
is the new obstruction. \qed
\end{corollary}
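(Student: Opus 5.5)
The statement has three parts: $T$ is central when $G=T\times U$ is nilpotent, $\Jcs(G)=\Jset(G)$, and Theorem~\ref{thm:central-split} reduces to \cite[Theorem~5.3]{G}. I would prove them in that order.

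\emph{Centrality of $T$.} For connected nilpotent $G$, the structure theorem gives $G=T\times U$ as a direct product. So $T$ commutes with $U$ and, being abelian, with itself; hence $T$ is central. Because $T$ is a diagonalizable central subgroup, it lies in the reductive part of the center, so $Z_r=T$. Conversely $Z_r\subseteq T$ by the choice of maximal torus made before \eqref{eq:centrally-split-sector}.

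\emph{The equality $\Jcs(G)=\Jset(G)$.} By definition of $\Jcs(G)$, I must show that $\res_T$ hits every toral class. The text preceding \eqref{eq:centrally-split-sector} says the image of $\res_T$ consists exactly of the toral classes supported in $Z_r$; with $Z_r=T$ this is all of $\Jset(T)$. I would also check this directly, since it is the only place anything is at stake. Take $[\sigma]\in\Jset(T)\cong\Hom(\wedge^2\Lambda,\C^\times)$ and form its inflation $\widetilde\sigma=\sigma\circ(i^*\otimes i^*)$. It is invariant because $T$ is central: the restriction $i^*$ is a Hopf map, and $\Os(T)$ is cocommutative as well as commutative, so conjugation by $G$ fixes $\widetilde\sigma$. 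Its restriction to $\Os(T)$ is $\sigma$, since $i^*q^*=\id$. Hence every $[J]$ has toral restriction in $\im(\res_T)$. This agrees with the $\Hom(\wedge^2\widehat{Z_r},\C^\times)$ factor in Theorem~\ref{thm:invariant-solvable}.

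\emph{Recovery of \cite[Theorem~5.3]{G}.} Substituting into \eqref{eq:central-split-quotient} gives
\[
\Jset(G)\cong\bigl(H^2_{\mathrm{inv}}(G)\times\mathcal J_0(G)\bigr)/K_0 .
\]
In the nilpotent case the ingredients simplify as follows.
\begin{itemize}
\item $\mathfrak u$ is an ideal centralized by $T$, so $\mathfrak g\wedge\mathfrak u$ splits as $(\mathfrak t\wedge\mathfrak u)\oplus\wedge^2\mathfrak u$.
\item By Theorem~\ref{thm:solvable-torus-trivial}, $\mathcal J_0(G)$ is the set of $G$-orbits of CYB solutions in $\mathfrak g\wedge\mathfrak u$; these orbits are in fact $U$-orbits, since $T$ acts trivially.
\item $H^2_{\mathrm{inv}}(G)\cong \Hom(\wedge^2\Lambda,\C^\times)\times(\mathfrak t\otimes\mathfrak z_u)\times(\wedge^2\mathfrak u)^G$.
\item $K_0$ is the kernel of $\res_T$ on the invariant classes, i.e.\ the last two factors $(\mathfrak t\otimes\mathfrak z_u)\times(\wedge^2\mathfrak u)^G$.
\end{itemize}
Comparing these with the parametrization in \cite[Theorem~5.3]{G} (toral bicharacter together with unipotent data modulo the invariant ambiguity) is then a matter of matching notation.

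\emph{Main obstacle.} The only real risk is that last identification. \cite[Theorem~5.3]{G} is phrased in its own conventions, and it has to be matched exactly with the quotient by $K_0$. I would handle it by quotienting out $K_0$ explicitly: use the $K_0$-action to normalize $L$ to a pure toral lift $L_\sigma$. Then Theorem~\ref{thm:centrally-supported-fibers} gives $\Jset(G)=\bigsqcup_{[\sigma]}\mathcal R_0(\mathfrak g)/G$, which should be precisely the form of \cite[Theorem~5.3]{G}. The closing sentence of the corollary, that for non-nilpotent $G$ the non-central support of the toral restriction is the new obstruction, is a remark pointing to Question~\ref{q:noncentral-torus}; it needs no proof.
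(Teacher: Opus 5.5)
Your proposal is correct and follows the paper's reasoning. The paper leaves this corollary unproved (it ends in \qed), and the implicit argument is the one you give: centrality of $T$ in $T\times U$ gives $Z_r=T$, so by the stated description of $\im(\res_T)$ every toral class is hit, $\Jcs(G)=\Jset(G)$, and Theorem~\ref{thm:central-split} then specializes. Your direct check that $\widetilde\sigma$ is invariant, and your normalization to pure toral lifts via Theorem~\ref{thm:centrally-supported-fibers}, are useful extra detail rather than a different route. That invariance check really only uses that $G$-conjugation fixes the central $T$, not cocommutativity of $\Os(T)$.
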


Proposition~\ref{prop:relative-toral-reduction} reduces the part not covered by
Theorem~\ref{thm:central-split} to torus-trivial cocycles on the generally
noncommutative cotriangular Hopf algebra \(\Os(G)^{\widetilde\sigma}\).  The resulting
noncentral-torus problem is formulated as Question~\ref{q:noncentral-torus},
and we now discuss a possible color-Lie approach to it. (See, e.g. \cite{Scheunert} for color Lie algebras, \cite[Section~3]{CSV} for cocycle 
twisting of color Lie algebras, \cite{K} for its categorical
generalization to Lie algebra objects in module and comodule
categories, \cite{HM1} for color Lie bialgebras, their cohomology, and their deformation theory,
and \cite{HM2} for the color classical Yang--Baxter
equation and formal quantization of triangular color Lie bialgebras.)

Choose a bicharacter representative \(\sigma\), and let
$q_\sigma(\alpha,\beta):=\sigma(\alpha,\beta)\sigma^{-1}(\beta,\alpha)$.
With respect to the \(T\)-weight decomposition
$\mathfrak u=\bigoplus_{\alpha\in \Lambda}\mathfrak u_\alpha$,
define for homogeneous elements,
\[
[x_\alpha,y_\beta]_\sigma
:=
\sigma(\alpha,\beta)[x_\alpha,y_\beta],
\]
and denote the resulting {\bf color} Lie algebra by
\(\mathfrak g^\sigma\).
Give $\mathfrak t$ degree zero, so that
$\mathfrak g=\mathfrak t\oplus\mathfrak u$ is $\Lambda$-graded.
Also call an element $r_\sigma\in\wedge^2_{q_\sigma}\mathfrak g^\sigma$
{\bf relative} if
$(p\otimes p)(r_\sigma)=0$, where
$p:\mathfrak g^\sigma\twoheadrightarrow
\mathfrak g^\sigma/\mathfrak u^\sigma$.
For example, the ordinary
counterpart is the nilpotence condition of
\cite[Definition~2.13]{EG1}, used in the classification of
unipotent fiber functors in \cite[Theorem~3.2]{EG1}.

The remaining problem is Question \ref{q:noncentral-torus}. We discuss a possible color-CYBE
approach, without asserting a classification or an algebraic quantization
in this generality. Proposition \ref{prop:relative-toral-reduction} reduces the remaining
problem to Hopf $2$-cocycles on $\Os(G)^{\widetilde\sigma}$ with trivial toral
restriction. Cocycle twisting of graded Lie algebras and formal quantization
of color Lie bialgebras suggest a relative Lie-theoretic description
\cite{CSV,K,HM2}. To formulate such a description, one must specify the
allowed graded or relative tensors, their equivalence relation, the
compatibility with change of the chosen torus, and an algebraic
quantization whose support is defined. The condition of zero projection
to the toral tensor square is motivated by
\cite[Definition~2.13 and Theorem~3.2]{EG1}, but alone does not provide these
additional structures. We leave this as a direction for Question \ref{q:noncentral-torus}.

\subsection{Minimal Hopf \texorpdfstring{$2$}{2}-cocycles via multi-extended Jordanian twists}\label{sec:extended}

We now use \S\ref{sec:jordanian-twists} to give a family for which the one-sided twisted algebra
can be computed completely.

Fix an integer $d\neq0$, and integers $p_i,q_i\in\mathbb Z$, $1\leq i\leq m$, such that
\[
 p_i+q_i=d,
   \qquad 1\leq i\leq m.
\]
Set $\mathbf p:=(p_1,\dots,p_m)$ and $\mathbf q:=(q_1,\dots,q_m)$. Let
\[
   G(d;\mathbf p,\mathbf q)
   :=\Gm\ltimes H_{2m+1}
\]
be the affine algebraic group whose underlying variety is
$\Gm\times\mathbb A^{2m+1}$
with coordinates
\[
   t^{\pm1},a_1,\ldots,a_m,b_1,\ldots,b_m,c,
\]
and multiplication rule
\begin{align}
 &(t,\mathbf a,\mathbf b,c)
  (t',\mathbf a',\mathbf b',c')
=
 \left(
 tt',
 \mathbf a+t^{\mathbf p}\cdot\mathbf a',
 \mathbf b+t^{\mathbf q}\cdot\mathbf b',
 c+t^dc'+\sum_{i=1}^m a_i t^{q_i}b_i'
 \right)
 \label{eq:extended-group-law}
\end{align}
(with the obvious notation).
Thus, we have 
\begin{equation}\label{eq:coproduct-t}
   \Delta(t)=t\otimes t,
 \end{equation}
 \begin{equation}\label{eq:coproduct-a}  
   \Delta(a_i)=a_i\otimes1+t^{p_i}\otimes a_i,\qquad 1\leq i\leq m,
 \end{equation}
  \begin{equation}\label{eq:coproduct-b}
   \Delta(b_i)=b_i\otimes1+t^{q_i}\otimes b_i,\qquad 1\leq i\leq m,
  \end{equation}
   \begin{equation}\label{eq:coproduct-c}
   \Delta(c)=c\otimes1+t^d\otimes c
      +\sum_{i=1}^m a_i t^{q_i}\otimes b_i.
  \end{equation}
The relevant left-invariant vector fields are
\begin{equation}\label{eq:extended-vector-fields}
   H:=t\frac{\partial}{\partial t},\qquad
   A_i:=t^{p_i}\frac{\partial}{\partial a_i},\qquad
   B_i:=t^{q_i}
       \left(\frac{\partial}{\partial b_i}
       +a_i\frac{\partial}{\partial c}\right)\qquad (1\leq i\leq m),\qquad
   E:=t^d\frac{\partial}{\partial c},
\end{equation}
and they satisfy
\begin{equation}\label{eq:extended-brackets-one}
   [H,E]=dE,\qquad 
   [H,A_i]=p_iA_i\qquad (1\leq i\leq m),
 \end{equation} 
 \begin{equation}\label{eq:extended-brackets-two} 
   [H,B_i]=q_iB_i,
   \qquad
   [A_i,B_j]=\delta_{ij}E\qquad (1\leq i,j\leq m),
 \end{equation}
with all remaining brackets among the $A_i,B_i,E$ equal to zero.
Let
\[
   H_0:=d^{-1}H,
   \qquad
   \alpha_i:=\frac{p_i}{d},
   \qquad
   \beta_i:=\frac{q_i}{d}\qquad (1\leq i\leq m).
\]
Then $\alpha_i+\beta_i=1$ for every $1\leq i\leq m$.

For $\xi\in\C^\times$, set $\sigma_\xi:=\log(1+\xi E)$,
and consider the {\bf normalized multi-extended Jordanian twist}
\begin{equation}\label{eq:multi-extended-twist}
   \mathcal F_{\mathrm{EJ}}(\xi)
   :=
   \exp(-H_0\otimes\sigma_{-\xi})
   \prod_{i=1}^{m}\exp\bigl(\xi A_i\otimes B_i e^{-\beta_i\sigma_{-\xi}}\bigr)
\end{equation}
(see \cite{KLM}, and compare also the chain formulation in
\cite{KLO}). 
Note that the factors in the product commute. 
Each series acts locally finitely on
$\Os(G)^{\otimes 2}$ since the operators $E,A_i,B_i$ are locally nilpotent
on regular functions, while every positive power of $\sigma_{-\xi}$ has a
locally nilpotent operator in the second tensor factor.  Hence,
\eqref{eq:J-from-F} defines a Hopf $2$-cocycle
$J_{\mathrm{EJ}}(\xi)$ for $\Os(G(d;\mathbf p,\mathbf q))$.

\begin{theorem}\label{thm:extended-block}
With the notation above, there exist algebra isomorphisms 
\begin{equation}\label{eq:extended-isomorphism}
   \Os\bigl(G(d;\mathbf p,\mathbf q)\bigr)_{J_{\mathrm{EJ}}(\xi)}
   \cong
   \D(\Gm)\otimes \mathcal A_m(\C)
   \cong
   \D(\Gm\times\mathbb A^m).
\end{equation}
In particular, $J_{\mathrm{EJ}}(\xi)$ is minimal.  Remarkably, the
isomorphism type of the algebra is independent of the weights
$p_i,q_i\in \mathbb{Z}$ and of $\xi\in \mathbb{C}^{\times}$.
\end{theorem}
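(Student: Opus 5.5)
The plan is to compute the twisted products of the generators $t^{\pm1},a_i,b_i,c$ directly from \eqref{eq:star-from-F}, where $\mathcal F$ acts through the right regular action \eqref{eq:right-regular-action} by the vector fields \eqref{eq:extended-vector-fields}. From these products we will read off a presentation of $\Os(G)_J$, $J:=J_{\mathrm{EJ}}(\xi)$, and then match it with $\D(\Gm)\otimes\mathcal A_m(\C)$ by an explicit change of generators.

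The first step is to record the action of the operators on the coordinates:
\[
H t=t,\qquad Ha_i=0,\qquad A_ia_i=t^{p_i},\qquad B_ib_i=t^{q_i},\qquad B_ic=a_it^{q_i},\qquad Ec=t^d.
\]
All other actions on coordinates vanish, and all three vector fields $A_i,B_i,E$ kill $t$. With this table we expand $\mathcal F$ only to the order needed on each pair of inputs. The key observations are the following.
\begin{enumerate}
\item On pairs of functions of $t$ and the $a_i$, all factors act trivially except possibly $H_0\otimes\sigma_{-\xi}$, and $\sigma_{-\xi}$ kills such functions. Hence $\C[t^{\pm1},a_1,\dots,a_m]$ is a commutative subalgebra with unchanged product.
\item The element $b_i$ is killed by $E$, so $\sigma_{-\xi}$ acts trivially on it. This gives
\[
a_j\cdot_J b_i=a_jb_i+\delta_{ij}\xi t^{p_i+q_i}=a_jb_i+\delta_{ij}\xi t^d,
\qquad
b_i\cdot_J a_j=a_jb_i .
\]
Thus $[a_i,b_j]_J=\delta_{ij}\xi t^d$, while $b_i$ commutes with $t$ and the $b$'s commute among themselves.
\item Products with $c$ involve $\sigma_{-\xi}(c)=-\xi t^d$, since $E^2c=0$ and higher terms involve only $E$-powers. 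This produces, as in Example~\ref{ex:basic-solvable-blocks}(4), relations of the form $t\cdot_J c-c\cdot_J t=\xi d^{-1}t^{d+1}$ (up to sign), together with lower corrections coming from $B_i$ acting on $c$ in commutators with $a_i$. We also verify $[t,a_i]_J=0=[t,b_i]_J$.
\end{enumerate}
By Theorem~\ref{thm:solvable-ore-structure}, $\Os(G)_J$ is the iterated Ore extension generated by $t^{\pm1},a,b,c$, and these relations give all the structure constants.

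The second step is the change of variables. Set
\[
P_i:=a_i,\qquad Q_i:=\xi^{-1}t^{-d}\cdot_J b_i\qquad(\text{suitably ordered}),
\]
so that $[P_i,Q_j]=\delta_{ij}$. These generate a copy of $\mathcal A_m(\C)$ commuting with $t$. Next take
\[
E':=\pm d\,\xi^{-1}t^{-d}\cdot_J c+\sum_i(\text{correction }\lambda_i\,P_i\cdot_J Q_i),
\]
chosen so that $[E',t]=t$, in the manner of Example~\ref{ex:basic-solvable-blocks}(4). The correction term is then adjusted so that $E'$ commutes with every $P_i$ and $Q_i$. The weights $p_i,q_i$ enter only in these correction coefficients, namely through terms like $\beta_i$ coming from $e^{-\beta_i\sigma}$ and through $[H_0,\cdot]$; this is the reason the final isomorphism type is independent of $\mathbf p,\mathbf q$ and $\xi$. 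Using the PBW basis of Corollary~\ref{cor:solvable-simple-properties} in $t^\lambda,a,b,c$, the elements $t^{\pm1},E',P_i,Q_i$ again generate and satisfy exactly the defining relations of $\D(\Gm)\otimes\mathcal A_m(\C)$. Comparison of PBW bases, or equivalently GK dimension together with simplicity of the target, shows that the induced surjection is an isomorphism. The identification $\D(\Gm)\otimes\mathcal A_m\cong\D(\Gm\times\mathbb A^m)$ is standard. Minimality of $J$ then follows from Theorem~\ref{thm:minimal-simple}, since the target algebra is simple.

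The main obstacle is the bookkeeping in the products involving $c$. There the factor $e^{-\beta_i\sigma_{-\xi}}$ and the reordering of the product of Jordanian and extension factors interact, and these must produce commutators $[a_i,c]_J$ and $[b_i,c]_J$ that are exactly cancelled by a quadratic correction in $P_i,Q_i$. My first attempt would be to reduce to the case $m=1$, compute the $2\times2$ (in $\mathcal F$-order) expansion on $(a_1,c)$, $(b_1,c)$ and $(c,c)$ by hand, and then observe that distinct indices $i$ decouple because the factors commute.
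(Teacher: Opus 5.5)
Your plan is essentially the paper's proof. The paper computes the twisted products of $t^{\pm1},a_i,b_i,c$ from \eqref{eq:star-from-F}, sets $v_i=\xi^{-1}t^{-d}b_i$ and $P=-d\xi^{-1}t^{-d}c+d\sum_i a_iv_i$ (so your sign is minus and every $\lambda_i=d$), checks $[P,t]_J=t$ and $[a_i,v_j]_J=\delta_{ij}$ with all other commutators zero, and deduces injectivity of the resulting surjection from simplicity of its source $\D(\Gm)\otimes\mathcal A_m(\C)$.

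Your forecast of the bookkeeping is off, harmlessly. Since $e^{-\beta_i\sigma_{-\xi}}(c)=c+\beta_i\xi t^d$ and $B_i$ kills $t^d$, the $\beta_i$ never appear. Each extension correction ($\xi t^{p_i}\otimes t^{q_i}$ or $\xi t^{p_i}\otimes a_it^{q_i}$) has second factor killed by $E$, so the outer Jordanian factor contributes nothing there. The only nonzero commutators are therefore $[t,c]_J=\frac{\xi}{d}t^{d+1}$, $[a_i,b_j]_J=\delta_{ij}\xi t^d$ and $[a_i,c]_J=\xi a_it^d$. In particular $[b_i,c]_J=0$, and the correction $d\sum_i a_iv_i$ compensates $[a_j,c]_J$ and the failure of $c$ to commute with the factor $t^{-d}$ in $v_j$. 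Also, take the PBW basis from Theorem~\ref{thm:solvable-ore-structure}, since Corollary~\ref{cor:solvable-simple-properties} presupposes minimality.
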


\begin{proof}
Set $G:=G(d;\mathbf p,\mathbf q)$ and $J:=J_{\mathrm{EJ}}(\xi)$.

Let $s:=\sigma_{-\xi}=\log(1-\xi E)$ and
$D_i:=B_i e^{-\beta_i s}$. Thus
\[
 \mathcal F_{\mathrm{EJ}}(\xi)
 =\exp(-H_0\otimes s)
   \prod_{i=1}^m\exp(\xi A_i\otimes D_i).
\]
The extension factors in the product over $i$ commute. Their
position to the right of the Jordanian factor is retained. On the coordinate
generators we have
\[
 s(c)=-\xi t^d,\qquad
 s(t)=s(a_j)=s(b_j)=0,
\]
\[
 e^{-\beta_i s}(c)=c+\beta_i\xi t^d,\qquad
 D_i(b_j)=\delta_{ij}t^{q_i},\qquad
 D_i(c)=a_i t^{q_i},\qquad
 D_i(t)=D_i(a_j)=0.
\]
Also $A_i(a_j)=\delta_{ij}t^{p_i}$ and $H_0(t)=d^{-1}t$.
Each extension correction has second tensor factor $t^{q_i}$ or
$a_i t^{q_i}$, which is annihilated by $E$. The outer Jordanian
factor therefore adds no further correction to those terms. Then  \eqref{eq:star-from-F}
gives
\begin{align*}
 t\cdot_Jc&=tc+\frac{\xi}{d}t^{d+1},
 &c\cdot_Jt&=ct,\\
 a_i\cdot_Jb_j&=a_ib_j+\delta_{ij}\xi t^d,
 &b_j\cdot_Ja_i&=b_ja_i,\\
 a_i\cdot_Jc&=a_ic+\xi a_it^d,
 &c\cdot_Ja_i&=ca_i.
\end{align*}
Here products on the right are ordinary products of coordinate functions.
Using these identities, we obtain
\begin{equation}\label{eq:relation-tc}
   [t,c]_J=\frac{\xi}{d}t^{d+1},
   \qquad
   [a_i,b_j]_J=\delta_{ij}\xi t^d,
   \qquad
   [a_i,c]_J=\xi a_it^d\qquad (1\leq i,j\leq m).
\end{equation}
All other commutators among $t,a_1,\ldots,a_m,b_1,\ldots,b_m,c$ vanish.
Relations involving $t^{-1}$ follow from
$[t^{-1},x]_J=-t^{-1}[t,x]_Jt^{-1}$.

Now define
\begin{equation}\label{eq:new-extended-generators}
   v_i:=\xi^{-1}t^{-d}b_i
   \qquad (1\leq i\leq m),\qquad
   P:=-d\xi^{-1}t^{-d}c
      +d\sum_{i=1}^m a_iv_i.
\end{equation}
Then relations \eqref{eq:relation-tc} imply
\begin{equation}\label{eq:canonical-relations}
   [P,t]_J=t,
   \qquad
   [a_i,v_j]_J=\delta_{ij}\qquad (1\leq i,j\leq m).
\end{equation}
Again all other commutators among $P,t,a_1,\ldots,a_m,v_1,\ldots,v_m$
vanish and relations involving $t^{-1}$ are obtained from
$[t^{-1},x]_J=-t^{-1}[t,x]_Jt^{-1}$; in particular,
\[
 [P,t^k]_J=kt^k\qquad(k\in\mathbb Z).
\]
For completeness, the cancellations in the mixed commutators are
\begin{equation*}
   \left[-d\xi^{-1}t^{-d}c,a_j\right]_J=d a_j,\qquad
   \left[d\sum_{i=1}^m a_iv_i,a_j\right]_J=-d a_j\qquad (1\leq j\leq m),
\end{equation*}
 \begin{equation*}  
   \left[-d\xi^{-1}t^{-d}c,v_j\right]_J=-d v_j,\qquad   
   \left[d\sum_{i=1}^m a_iv_i,v_j\right]_J=d v_j\qquad (1\leq j\leq m).
\end{equation*}

The relations define a unital homomorphism
\[
 \mathcal D(\Gm)\otimes \mathcal A_m(\C)\longrightarrow\Os(G)_J
\]
sending the standard generators to $t^{\pm1},P,a_i,v_i$.
The identities
\[
 b_i=\xi t^dv_i,\qquad
 c=-\frac{\xi}{d}t^d\left(P-d\sum_{i=1}^m a_iv_i\right)
\]
show that it is surjective. Since its source is the simple ring
$\mathcal D(\Gm\times\mathbb A^m)$, its kernel is zero. This proves (\ref{eq:extended-isomorphism}).
Theorem \ref{thm:minimal-simple} now implies that $J_{\mathrm{EJ}}(\xi)$ is minimal.
\end{proof}

Diatta-Manga-Mbaye prove that the \(2\)-solvable Frobenius Lie
algebras considered in \cite{DMM} split as
$\mathfrak g=\mathfrak b\ltimes V$,
$\dim\mathfrak b=\dim V=n$,
where \(\mathfrak b\) is a maximal abelian subalgebra of
\(\End(V)\).  A Frobenius functional determines a nondegenerate
triangular tensor \(r\in\wedge^2\mathfrak g\). Then Example~\ref{ex:basic-solvable-blocks} and 
Theorem~\ref{thm:extended-block} motivate the following.  

\begin{conjecture}
\label{conj:polarization}
Let $\mathfrak g:=\mathfrak b\ltimes V$ be a $2$-solvable Frobenius
Lie algebra as above, with $\mathfrak b$ and $V$ abelian and
$\dim\mathfrak b=\dim V=n$. Assume that
$\mathfrak b\subseteq\operatorname{End}(V)$ is algebraic, let
$B\subseteq\operatorname{GL}(V)$ be its connected algebraic group,
and set $G:=B\ltimes V_{\mathrm a}$. For a Frobenius functional
$\lambda\in\mathfrak g^*$, let
\[
 \omega_\lambda(x,y):=\lambda([x,y]),\qquad
 r_\lambda:=\omega_\lambda^{-1},\qquad
 J_\lambda:=J_f(r_\lambda).
\]
Then
$\Os(G)_{J_\lambda}\cong\mathcal D(B)
$
as algebras. In particular, if
$B\cong\Gm^s\times\Ga^{n-s}$, then
$$\Os(G)_{J_\lambda}
 \cong\mathcal D(\Gm)^{\otimes s}\otimes \mathcal A_{n-s}(\C)$$ as algebras. \qed
\end{conjecture}

Since $\mathfrak b$ and $V$ are abelian, the Frobenius form
$\omega_\lambda$ vanishes on each of these two summands.
Its nondegeneracy and their equal dimensions imply that
$r_\lambda\in\mathfrak b\wedge V$. Since
$V\subseteq\operatorname{Lie}(R_u(G))$, the tensor $r_\lambda$
belongs to $\mathcal R_0(\mathfrak g)$. Thus $J_f(r_\lambda)$
is defined by the construction preceding Theorem~\ref{thm:solvable-torus-trivial}.

The low-dimensional
classifications in \cite{DMM} provide a finite list of further test
cases for Conjecture~\ref{conj:polarization}.

\begin{example}\label{ex:named-families}
The following are special cases of Theorem~\ref{thm:extended-block}.
\begin{enumerate}
\item For $m=0$, the basic Jordanian carrier gives
$\Os(\Gm\ltimes\Ga)_J\cong\D(\Gm)$.

\item In the restricted Borel carrier of $\mathfrak{sl}_N$ from
\cite{KLM}, one has $m=N-2$, and after the customary normalization,
$d=2$ and $p_i=q_i=1$.  Hence,
$\Os(G)_J\cong\D(\Gm)\otimes \mathcal A_{N-2}(\C)$.

\item At a peripheric boundary, one may take
$p_i=0$, $q_i=d$ (or the reversed choice).  Then the simple algebra is still
$\D(\Gm)\otimes \mathcal A_m(\C)$,
although the carrier Lie algebra has changed.  This agrees with the
fact that the peripheric twists of \cite{LD1} remain Frobenius. \qed
\end{enumerate}
\end{example}

By Proposition~\ref{prop:external-products}, Theorem~\ref{thm:extended-block}
also produces arbitrarily large examples by taking external products.
Thus, combining Theorem~\ref{thm:extended-block} with a
nondegenerate toral bicharacter yields the simple algebras
\begin{equation}\label{eq:external-product-family}
   \D(\Gm)^{\otimes r}
   \otimes \mathcal A_m(\C)
   \otimes\C^\eta[\mathbb Z^{2s}].
\end{equation}

The chain constructions in \cite{KLO,LD2} use products of the
Jordanian and extension factors
\eqref{eq:Jordanian-factor}-\eqref{eq:extension-factor}, often
together with Reshetikhin factors.  Their carrier Lie algebras are
solvable, and for full chains, Frobenius.

\begin{conjecture}
\label{conj:chain}
Let \(G\) be a connected solvable affine algebraic carrier of a full chain of
extended Jordanian twists.  Suppose the chain has \(r\) Jordanian
links and a total of \(m\) extension pairs, so that
$\dim G=2r+2m$,
and assume that the chain parameters are nonzero and that the full carrier
$r$-matrix is nondegenerate. Take the right-oriented operator associated
to the standard chain as in \S\ref{sec:jordanian-twists}, and assume that it and its inverse
act on $\Os(G)^{\otimes2}$ as mutually inverse algebraic operators.  If
\(J_{\mathcal F}\) is the resulting Hopf \(2\)-cocycle for $G$, then after a
triangular algebraic change of generators, there is an algebra isomorphism
\begin{equation*}
\Os(G)_{J_{\mathcal F}}
\cong
\D(\Gm^r\times\mathbb A^m)
\cong
\D(\Gm)^{\otimes r}\otimes \mathcal A_m(\C).
\end{equation*}
If compatible Reshetikhin factors are added, then the resulting algebra is
a jointly nondegenerate Jordanian quantum differential algebra
$ A(\Lambda,\eta;V,\ell,\omega)$ (see Definition \ref{def:QDO} below). \qed
\end{conjecture}

Conjecture \ref{conj:chain} is proved for a single multi-extended link by
Theorem~\ref{thm:extended-block}, and for external products of such
links by Proposition~\ref{prop:external-products}.  The recursive
constructions in \cite{KLM,KLO,LD2} suggest that each Jordanian link
contributes one logarithmic coordinate, and its differential operator,
while each extension pair contributes one Weyl pair.

\begin{example}\label{ex:mixed-torus-jordanian}
Theorem~\ref{thm:extended-block} yields differential operators on a
commutative affine algebraic group.  A different phenomenon occurs when a
toral bicharacter and a logarithmic derivation interact.

In \cite[Example~4.13]{G}, a minimal Hopf $2$-cocycle on
$G:=\Gm^2\times\Ga$ 
produces the algebra generated by $x^{\pm1},y^{\pm1},z$ with relations
\begin{equation}\label{eq:G1-mixed-example}
   xy=qyx,
   \qquad
   yz=zy,
   \qquad
   xz=(z+1)x,
\end{equation}
where $q=e^h$ has infinite multiplicative order.  Equivalently,
$[z,x]=-x$ and $[z,y]=0$,
so this is a differential Ore extension of the simple quantum torus
$\C_q[x^{\pm1},y^{\pm1}]$ by the degree derivation
$\delta(x)=-x$ and $\delta(y)=0$.
Thus, even a commutative solvable affine algebraic group may yield a simple algebra in
which a quantum-torus block and a Jordanian logarithmic derivative are nontrivially coupled. \qed
\end{example}

Example~\ref{ex:basic-solvable-blocks},
Theorem~\ref{thm:extended-block}, and
Example~\ref{ex:mixed-torus-jordanian} motivate the following.

\begin{question}
\label{q:simple-solvable-twists}
Which simple algebras arise as \(\Os(G)_J\)? \qed
\end{question}

\section{A candidate normal family}\label{sec:normal-family}

The examples in \S\ref{sec:solvable-cocycles} are all contained in
the family $\mathscr A$ of algebras $A(\Lambda,\eta;V,\ell,\omega)$
introduced in Definition~\ref{def:QDO}. We now study its abelian
realization, its role in the proposed normal forms, and its derivations.

\begin{example}
  (1) For $V=0$, we get quantum tori.
  
   (2) For $\Lambda=0$, we get Weyl algebras when $\omega$ is nondegenerate.
   
   (3) For $\eta=1$, $\omega=0$, and 
    a perfect pairing $\ell$, we get $\D(\Gm^r)$.
   
   (4) Theorem~\ref{thm:extended-block} gives $\D(\Gm)\otimes \mathcal A_m(\C)$.
   
   (5) Example~\ref{ex:mixed-torus-jordanian} gives a coupled quantum-torus/degree-derivation algebra. \qed
\end{example}

We now realize the algebras $A(\Lambda,\eta;V,\ell,\omega)$ by abelian
Hopf $2$-cocycles and relate their simplicity to the joint radicals
\eqref{eq:KV}--\eqref{eq:KLambda}.

\begin{theorem}
\label{thm:abelian-realization}
Let $(\Lambda,\eta;V,\ell,\omega)$ be as in Definition \ref{def:QDO}.
Let $T$ be the algebraic torus with character lattice $\Lambda$,
and let $V^*_{\mathrm a}$ denote the vector group whose coordinate
space of linear functions is $V$.  Let
$G_0:=T\times V^*_{\mathrm a}$.
Then there is an explicit Hopf $2$-cocycle $J$ for
$\Os(G_0)$, such that
\begin{equation}\label{eq:abelian-realization}
   \Os(G_0)_J
   \cong
    A(\Lambda,\eta;V,\ell,\omega)
\end{equation}
as algebras. Furthermore, the following are equivalent:
\begin{enumerate}
\item $J$ is minimal.
\item $A(\Lambda,\eta;V,\ell,\omega)$ is simple.
\item $A(\Lambda,\eta;V,\ell,\omega)$ 
is jointly nondegenerate.
\end{enumerate}
Thus, every jointly nondegenerate algebra in $\mathscr A$
actually occurs as $\Os(G)_J$ for a commutative, hence connected
solvable affine algebraic group, $G$ and a minimal Hopf $2$-cocycle $J$ for $G$.
\end{theorem}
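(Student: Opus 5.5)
We will construct $J$ as a Drinfeld twist in the sense of \eqref{eq:J-from-F}, built from commuting invariant vector fields on the commutative group $G_0=T\times V^*_{\mathrm a}$.

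\textbf{Construction.} Write $\Os(G_0)=\C[\Lambda]\otimes{\rm Sym}(V)$. Here $X^\lambda$ is grouplike and each $v\in V$ is primitive. Since $\Lambda$ is free, first replace $\eta$ by a cohomologous bicharacter; this changes $A(\Lambda,\eta;V,\ell,\omega)$ only by rescaling the $X^\lambda$. Then take
\[
J=\sigma\circ(i^*\otimes i^*)\;*\;J_{\ell}\;*\;J_\omega .
\]
\begin{itemize}
\item The first factor is the toral cocycle $\sigma(X^\lambda,X^\mu)=\eta(\lambda,\mu)$, inflated to $G_0$.
\item The factor $J_\ell$ is the abelian (Reshetikhin-type) twist $\exp(\sum_k H_k\otimes \partial_{\ell_k})$. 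Here the $H_k$ are the toral derivations $X^\lambda\mapsto\lambda_k X^\lambda$, and $\partial_{\ell_k}$ is the constant derivation of ${\rm Sym}(V)$ with $\partial(v)=\ell(v)(e_k)$. This acts locally finitely because the second factor is locally nilpotent.
\item The factor $J_\omega$ is $\exp(\tfrac12\sum \omega^{ij}\partial_i\otimes\partial_j)$ on the vector group, which is the Moyal cocycle.
\end{itemize}
All these operators are commuting elements of $U(\mathfrak g_0)^{\otimes2}$ with $\mathfrak g_0$ abelian. Hence their exponentials are twists, and their product is a twist. We normalize signs so that the one-sided product gives the relations below.

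\textbf{Computing the product.} By \eqref{eq:star-from-F} we compute directly:
\begin{itemize}
\item $X^\lambda\cdot_J X^\mu=\eta(\lambda,\mu)X^{\lambda+\mu}$;
\item $v\cdot_J X^\lambda-X^\lambda\cdot_J v=\ell(v)(\lambda)X^\lambda$, possibly after replacing $v$ by $P_v:=v$ times a constant, or by $\pm v$;
\item $[v,w]_J=\omega(v,w)$.
\end{itemize}
Only first-order terms survive on these generators. This yields a surjection $A(\Lambda,\eta;V,\ell,\omega)\to\Os(G_0)_J$. To see it is injective, compare PBW bases: $\{X^\lambda P^{a}\}$ in the source maps onto a basis of $\C[\Lambda]\otimes{\rm Sym}(V)$, because the leading term in the polynomial degree is the ordinary monomial, as in the filtration argument of Theorem~\ref{thm:solvable-ore-structure}. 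This proves \eqref{eq:abelian-realization}.

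\textbf{The equivalences.} Part (1)$\Leftrightarrow$(2) is Theorem~\ref{thm:minimal-simple}.

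For (2)$\Rightarrow$(3), Lemma~\ref{lem:center} shows that a nonzero $K_V$ or $K_\Lambda$ produces a nonscalar central element, namely $P_v$ or $X^\lambda$. Such an element generates a proper nonzero ideal, since $P_v-c$ is not a unit by degree and $X^\lambda-1$ is not a unit by grading. So simplicity fails.

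For (3)$\Rightarrow$(2), the cleanest route is Corollary~\ref{cor:center-solvable}: $G_0$ is connected solvable, so $Z(\Os(G_0)_J)=\C$ iff $J$ is minimal. Joint nondegeneracy gives trivial center by Lemma~\ref{lem:center}, hence $J$ is minimal, hence the algebra is simple by Theorem~\ref{thm:minimal-simple}.

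\textbf{Main obstacle.} The main obstacle is the construction step. One must check that the combined operator, including the purely toral factor defined by scalars on weight spaces, is a legitimate algebraic twist. One must also verify the exact signs and coefficients in the mixed commutator. Since all fields commute, the cocycle identity is automatic for exponentials of $r\in\mathfrak g_0\otimes\mathfrak g_0$. We would verify local finiteness and fix signs by a first-order computation on generators.
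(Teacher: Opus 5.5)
Your proposal is correct and takes essentially the paper's route: you build an abelian Drinfeld twist on $\mathfrak t\oplus V^*$ from a toral part, an $\ell$-mixed part and a Moyal part, prove the isomorphism by the leading-term PBW comparison, and obtain the equivalences from Lemma~\ref{lem:center}, Corollary~\ref{cor:center-solvable} and Theorem~\ref{thm:minimal-simple}. The differences are cosmetic: the paper uses the single element $\exp(r/2)$ with $r$ skew and then rescales the $X^\lambda$, whereas you take a convolution product of commuting factors with a bicharacter toral exponent (legitimate because $\Os(G_0)$ is cocommutative), and your direct unit argument for (2)$\Rightarrow$(3) is an optional shortcut.
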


\begin{proof}
Let $\mathfrak t:=\Lie(T)=\Hom_{\mathbb Z}(\Lambda,\C)$.
Since $\Lambda$ is free abelian and the exponential map
$\C\to\C^\times$ is surjective, we can choose
$r_T\in\wedge^2\mathfrak t$ such that
\begin{equation}\label{eq:rT-logarithm}
   \exp\bigl(r_T(\lambda,\mu)\bigr)
   =q_\eta(\lambda,\mu)
   \qquad(\lambda,\mu\in\Lambda).
\end{equation}
Choose a basis $v_1,\ldots,v_n$ of $V$ and let
$v^1,\ldots,v^n$ be the dual basis of $V^*$, regard $\omega$ as the
corresponding element
$r_\omega\in\wedge^2V^*$, and set
\begin{equation}\label{eq:abelian-r-matrix}
   r
   :=
   r_T-
   \sum_{i=1}^n\ell(v_i)\wedge v^i
   +r_\omega
   \in
   \wedge^2(\mathfrak t\oplus V^*).
\end{equation}
Note that (\ref{eq:abelian-r-matrix}) is independent of the choice of basis of $V$.  Since
$\mathfrak t\oplus V^*$ is abelian, $r$ satisfies the classical
Yang--Baxter equation and
$\mathcal F_r:=\exp(r/2)$
is an abelian Drinfeld twist.  Since on a Laurent monomial in the torus
coordinates the toral part of the exponential acts by a scalar and
all terms involving a vector-group derivative are locally finite, \eqref{eq:J-from-F} defines an honest Hopf
$2$-cocycle $J_r$ for $\Os(G_0)$.

Now for $v\in V$, let $P_v$ be the corresponding linear coordinate on
$V^*_{\mathrm a}$.  Then a direct application of
\eqref{eq:star-from-F} yields
\begin{equation}\label{eq:abelian-star-torus}
   X^\lambda\cdot_{J_r}X^\mu=
   \exp\!\left(\frac12r_T(\lambda,\mu)\right)
   X^{\lambda+\mu},
 \end{equation}
 \begin{equation} \label{eq:abelian-star-mixed}
   [P_v,X^\lambda]_{J_r}
=\ell(v)(\lambda)X^\lambda,
  \end{equation}
   \begin{equation} \label{eq:abelian-star-additive}
   [P_v,P_w]_{J_r}
   =\omega(v,w).
 \end{equation}
Since the group cocycle in \eqref{eq:abelian-star-torus} has commutator
bicharacter $q_\eta$ (\ref{eq:rT-logarithm}), and since two cocycles on the free abelian group
$\Lambda$ with the same commutator bicharacter are cohomologous, it follows that after rescaling the generators $X^\lambda$ relations
\eqref{eq:abelian-star-torus}-\eqref{eq:abelian-star-additive} are precisely
\eqref{eq:QDO-torus}--\eqref{eq:QDO-additive}. 
Ordered twisted monomials have nonzero scalar multiples of the ordinary
Laurent-polynomial monomials as their leading terms in the total
vector-group degree. Terms involving vector-group derivatives lower that
degree. Thus the change between the two ordered monomial families is
triangular and invertible. The PBW basis therefore proves that the
resulting homomorphism is an isomorphism, which proves
\eqref{eq:abelian-realization}.

Finally, by the realization already proved, $A(\Lambda,\eta;V,\ell,\omega)\cong\Os(G_0)_{J_r}$ with $G_0$
connected and commutative. Lemma \ref{lem:center} shows that $Z(A)=\C$ exactly
when $K_V=0=K_\Lambda$. Corollary \ref{cor:center-solvable} identifies this condition with minimality
of $J_r$, and Theorem \ref{thm:minimal-simple} identifies minimality with simplicity. This proves
the equivalences of (1), (2), and (3).
\end{proof}

\subsection{Localization in the final algebra and quotient division rings}
\label{sec:localization-obstruction}

The most direct localization formulation does not yield a weaker
normal form, since the final algebra is simple.

\begin{proposition}
\label{prop:normal-localization-collapse}
Let \(G\) be a connected solvable affine algebraic group, let \(J\) be a minimal Hopf
\(2\)-cocycle for \(G\), and let \(S\) be an Ore set generated by
finitely many nonzero normal semi-invariants of
\(\Os(G)_J\).  Then $S^{-1}\Os(G)_J=\Os(G)_J$.
Thus, 
$S^{-1}\Os(G)_J\cong
 A(\Lambda,\eta;V,\ell,\omega)$
as algebras if and only if $\Os(G)_J\cong
 A(\Lambda,\eta;V,\ell,\omega)$
as algebras.
\end{proposition}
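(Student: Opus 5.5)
The proof reduces to two results already in the paper. Since $J$ is minimal, Theorem~\ref{thm:minimal-simple} says that $R:=\Os(G)_J$ is a simple ring. Corollary~\ref{cor:ring-properties} adds that $R$ is a Noetherian domain. The generators of $S$ are nonzero normal elements of $R$ by hypothesis; semi-invariants are normal in any case by Lemma~\ref{lem:q615-normal-localization}(1). We are therefore exactly in the situation of Lemma~\ref{lem:normal-localization-trivial}.

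The key steps, in order, are as follows.

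First, for each generator $s$ of $S$, the two-sided ideal $Rs=sR$ is nonzero, because $R$ is a domain and $s\neq0$. Simplicity of $R$ then gives $Rs=sR=R$, so $s$ is a unit.

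Second, $S$ is the multiplicative set generated by these units. It therefore consists of units, and in particular of regular elements. Hence the Ore localization exists and is trivial: by the universal property of Ore localization, the canonical map $R\to S^{-1}R$ is an isomorphism. This is precisely the conclusion of Lemma~\ref{lem:normal-localization-trivial}.

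Third, since $S^{-1}\Os(G)_J=\Os(G)_J$ canonically, any algebra isomorphism of one with $A(\Lambda,\eta;V,\ell,\omega)$ is an isomorphism of the other. This gives the stated equivalence in both directions.

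The only point needing care is that the hypothesis ``Ore set generated by normal elements'' leaves no room for degenerate cases: a generator cannot be a zero divisor, and a nonzero normal element really does generate a nonzero ideal. Both follow at once from the domain property in Corollary~\ref{cor:ring-properties}. The nonzero assumption on the generators is what rules out $0\in S$. There is no substantial obstacle; the content is entirely in the earlier simplicity theorem.
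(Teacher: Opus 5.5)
Your proposal is correct and is essentially the paper's own proof: simplicity of $\Os(G)_J$ from Theorem~\ref{thm:minimal-simple}, then Lemma~\ref{lem:normal-localization-trivial}. One small point: you do not need the domain property of Corollary~\ref{cor:ring-properties}, since $s=1\cdot s\in Rs$ already makes $Rs$ nonzero whenever $s\neq0$, and the generators are then units, hence regular.
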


\begin{proof}
By Theorem~\ref{thm:minimal-simple}, \(\Os(G)_J\) is simple, so the claim follows from 
Lemma~\ref{lem:normal-localization-trivial}.
\end{proof}

\begin{remark}\label{rem:meaning-normal}
Proposition~\ref{prop:normal-localization-collapse} only shows that the denominators
cannot be required to be normal in the {\bf final} simple algebra.
They may be normal in successive coefficient algebras in the Ore tower
\eqref{eq:solvable-ore} and cease to be normal after later variables are
adjoined.  This is exactly what happens in the Weyl algebra;
\(x\) is normal in \(\C[x]\) but not in \(\mathcal A_1(\C)\). \qed
\end{remark}

Equality of quotient division rings does not imply an algebra isomorphism,
nor does it imply that the two algebras are related by localization at
normal elements of the final simple algebra. The following example
illustrates this distinction.

\begin{example}\label{ex:weyl-DGm}
Let $\mathcal A_1(\C)=\C\langle x,\partial\rangle/
(\partial x-x\partial-1)$.
Then $\mathcal A_1(\C)[x^{-1}]=\D(\Gm)$, so
\[
\operatorname{Frac}(\mathcal A_1(\C))
\cong
\operatorname{Frac}(\D(\Gm)).
\]
Both algebras belong to the family \(\mathscr A\) since 
$\mathcal A_1(\C)\cong
A(0,1;\C^2,0,\omega_{\mathrm{std}})$
and
\[
\D(\Gm)
\cong
A(\mathbb Z,1;\C,\ell,0),
\qquad
\ell(1)(1)=1.
\]
However they are not isomorphic (e.g. 
$\mathcal A_1(\C)^\times=\C^\times$ and 
$\D(\Gm)^\times\supseteq\C^\times x^{\mathbb Z}$).
Moreover, \(x\) is not normal in \(\mathcal A_1(\C)\) since otherwise
\(\partial x=x\partial+1\in xA_1(\C)\) would imply
\(1\in x\mathcal A_1(\C)\), contrary to the fact that \(x\) is not a unit. \qed
\end{example}

\begin{conjecture}
\label{conj:strong-normal-form}
Let $G$ be a connected solvable affine algebraic group and let \(J\) be a minimal Hopf \(2\)-cocycle for \(\Os(G)\).  Then the following hold:
\begin{enumerate}
\item
There are jointly nondegenerate data 
$(\Lambda,\eta;V,\ell,\omega)$, with
$\operatorname{rank}\Lambda+\dim V=\dim G$,
such that
$\Os(G)_J\cong A(\Lambda,\eta;V,\ell,\omega)$.
\item
There are jointly nondegenerate data 
$(\Lambda,\eta;V,\ell,\omega)$, with
$\operatorname{rank}\Lambda+\dim V=\dim G$,
such that $\operatorname{Frac}\bigl(\Os(G)_J\bigr)
\cong
\operatorname{Frac}\bigl(
 A(\Lambda,\eta;V,\ell,\omega)
\bigr)$, as algebras. \qed
\end{enumerate}
\end{conjecture}

\begin{remark}
Theorem~\ref{thm:abelian-realization} gives an explicit realization
map from the parameters of the conjectural classification to a
Hopf $2$-cocycle; namely,
\[
   (\eta,\ell,\omega)
   \longmapsto
   J_{\exp(r/2)},
   \qquad
   r:=r_T-\sum_i\ell(v_i)\wedge v^i+r_\omega.
\]
Thus, Theorem~\ref{thm:abelian-realization} proves that every algebra
in $\mathscr A$ occurs.  Conjecture~\ref{conj:strong-normal-form} is about the question of whether these exhaust the algebra isomorphism types, respectively quotient division rings,
of one-sided twists arising from minimal Hopf $2$-cocycles on connected solvable groups. \qed
\end{remark}

\subsection{Inductive birational deletion}

We now prove the strongest general statement obtainable from the Ore
presentation alone once the required outer-derivation calculation is
available.

We use the canonical derivations $\partial_{\alpha,\beta}$ of
\eqref{eq:canonical-derivation}, whose extension to quotient division
rings is established in Lemma~\ref{lem:canonical-derivation}.

\begin{theorem}
\label{thm:deletion-criterion}
Let $A_0:=\C^\eta[\Lambda]$ and $A_i:=A_{i-1}[z_i;\delta_i]$ $(1\leq i\leq n)$
be an iterated differential Ore extension of domains, and let
$D_i:=\operatorname{Frac} (A_i)$.  Start with
$V_0=0$, $A_0:=\C^\eta[\Lambda]$, and an algebra isomorphism 
$\Phi_0:D_0\xrightarrow{\cong}\operatorname{Frac} (A_0)$.
Suppose that, whenever $V_{i-1}$, $\ell_{i-1}$, $\omega_{i-1}$, and an algebra isomorphism
\[
   \Phi_{i-1}:D_{i-1}\xrightarrow{\cong}
   \operatorname{Frac}(A(\Lambda,\eta;V_{i-1},\ell_{i-1},\omega_{i-1}))
\]
have been constructed, we have
\begin{equation}\label{eq:outer-decomposition}
   \Phi_{i-1}\delta_i\Phi_{i-1}^{-1}
   =
   \ad(a_i)+\partial_{\alpha_i,\beta_i}
\end{equation}
for some $a_i\in
   \operatorname{Frac}(A(\Lambda,\eta;V_{i-1},\ell_{i-1},\omega_{i-1}))$, $\alpha_i\in\Hom_{\mathbb Z}(\Lambda,\C)$,
and $\beta_i\in V_{i-1}^*$.
Then there exists an algebra isomorphism
\begin{equation}\label{eq:deletion-conclusion}
   \Phi_i:\operatorname{Frac} (A_i)
   \xrightarrow{\cong}
   \operatorname{Frac}(A(\Lambda,\eta;V_i,\ell_i,\omega_i)),
\end{equation}
where
$V_i:=V_{i-1}\oplus\C v_i$
and the data are obtained recursively from
\begin{equation}\label{eq:extend-ell}
   \ell_i|_{V_{i-1}}=\ell_{i-1},\qquad
   \ell_i(v_i)=\alpha_i,   
\end{equation}
\begin{equation}\label{eq:extend-omega}   
   \omega_i|_{\wedge^2V_{i-1}}=\omega_{i-1},\qquad
   \omega_i(v_i,v)=\beta_i(v)
   \quad(v\in V_{i-1}).
\end{equation} 
In particular, there is an algebra isomorphism
\begin{equation}\label{eq:deletion-conclusionn}
   \operatorname{Frac} (A_n)
   \xrightarrow{\cong}
   \operatorname{Frac}(A(\Lambda,\eta;V_n,\ell_n,\omega_n)).
\end{equation}
\end{theorem}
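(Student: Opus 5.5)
We argue by induction on $i$, and the whole content lies in the single inductive step from $i-1$ to $i$. Write $B:=A(\Lambda,\eta;V_{i-1},\ell_{i-1},\omega_{i-1})$, $E:=\operatorname{Frac}(B)$, $\delta':=\Phi_{i-1}\delta_i\Phi_{i-1}^{-1}$, and $B':=A(\Lambda,\eta;V_i,\ell_i,\omega_i)$. The first step is to transport the Ore extension through $\Phi_{i-1}$. Since $A_i=A_{i-1}[z_i;\delta_i]$ is a domain, $D_i$ is also the division ring of fractions of the Ore extension $D_{i-1}[z_i;\delta_i]$. Indeed, every nonzero element of $A_{i-1}$ is invertible in $D_i$, so $D_{i-1}[z_i;\delta_i]$ embeds in $D_i$ and generates it as a division ring. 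The isomorphism $\Phi_{i-1}$ intertwines $\delta_i$ with $\delta'$. It therefore extends, by $z_i\mapsto z_i$, to an isomorphism $D_{i-1}[z_i;\delta_i]\cong E[z_i;\delta']$, and by the universal property of Ore localization this induces $D_i\cong\operatorname{Frac}(E[z_i;\delta'])$.

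Next we remove the inner part of $\delta'$. By hypothesis \eqref{eq:outer-decomposition}, $\delta'=\ad(a_i)+\partial_{\alpha_i,\beta_i}$ with $a_i\in E$. Set $w:=z_i-a_i$. For $e\in E$,
\[
[w,e]=\delta'(e)-[a_i,e]=\partial_{\alpha_i,\beta_i}(e).
\]
Here $\partial_{\alpha_i,\beta_i}$ is a derivation of $E$ by Lemma~\ref{lem:canonical-derivation}. The substitution $z_i\mapsto w+a_i$ is triangular in $z_i$-degree, so it gives $E[z_i;\delta']=E[w;\partial_{\alpha_i,\beta_i}]$ as rings, and hence also identifies their fraction fields.

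It remains to identify $\operatorname{Frac}(E[w;\partial_{\alpha_i,\beta_i}])$ with $\operatorname{Frac}(B')$. Put $P_{v_i}:=w$. Then, by \eqref{eq:canonical-derivation}, the defining relations of $B'$ hold:
\[
[P_{v_i},X^\lambda]=\alpha_i(\lambda)X^\lambda=\ell_i(v_i)(\lambda)X^\lambda,
\qquad
[P_{v_i},P_v]=\beta_i(v)=\omega_i(v_i,v),
\]
and the relations among the old generators are those of $B$. These are exactly the choices \eqref{eq:extend-ell}--\eqref{eq:extend-omega}, and $\omega_i$ is alternating by construction. Since $\partial_{\alpha_i,\beta_i}$ preserves $B$, we have $B'\cong B[w;\partial_{\alpha_i,\beta_i}|_B]$ by the PBW description following Definition~\ref{def:QDO}. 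This subring sits inside $E[w;\partial_{\alpha_i,\beta_i}]$, and every element of the latter is of the form $s^{-1}b$ with $s\in B\setminus\{0\}$ and $b\in B'$: common left denominators exist for the finitely many coefficients, and one uses the Ore condition in $B$. Hence both algebras have the same fraction field. Composing the three isomorphisms gives $\Phi_i$, and iterating up to $i=n$ yields \eqref{eq:deletion-conclusionn}.

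The main obstacle is the localization bookkeeping: showing that $\operatorname{Frac}(R[z;\delta])=\operatorname{Frac}(\operatorname{Frac}(R)[z;\delta])$ for an Ore domain $R$, and the analogous identification for $B'$ inside $E[w;\partial]$. We would handle this by a single general lemma. If $R\subseteq S$ are Noetherian domains with $S=\operatorname{Frac}(R)[z;\delta]\supseteq R[z;\delta]$, then $R\setminus\{0\}$ is an Ore set in $R[z;\delta]$; this follows from a leading-coefficient argument. Localizing at it gives $\operatorname{Frac}(R)[z;\delta]$, and fraction fields are transitive under Ore localization. Both rings involved are Noetherian (by Corollary~\ref{cor:ring-properties}-type arguments for $A_i$, and the Ore-extension structure for $B'$), so they are Ore domains and this lemma applies at each stage.
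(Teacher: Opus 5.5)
Your proposal is correct and follows the paper's own route. Both arguments transport the Ore extension through $\Phi_{i-1}$, replace $z_i$ by $P_{v_i}=z_i-a_i$ so that $[P_{v_i},-]=\partial_{\alpha_i,\beta_i}$, match the defining relations of $A(\Lambda,\eta;V_i,\ell_i,\omega_i)$ via \eqref{eq:extend-ell}--\eqref{eq:extend-omega}, and conclude by PBW and induction. The only difference is that you spell out the Ore-localization bookkeeping (identifying $\operatorname{Frac}(A_{i-1}[z_i;\delta_i])$ with $\operatorname{Frac}(D_{i-1}[z_i;\delta_i])$, and the common-denominator argument inside $E[w;\partial_{\alpha_i,\beta_i}]$), which the paper compresses into a single appeal to PBW.
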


\begin{proof}
Assume that $D_{i-1}$ has already been identified, as an algebra, with the quotient
division ring
\[
   {\rm Frac}(
   A(\Lambda,\eta;V_{i-1},\ell_{i-1},\omega_{i-1})),
\]
such that (\ref{eq:outer-decomposition}) holds.
Inside the differential Ore extension of $D_{i-1}$, after transporting by $\Phi_{i-1}$, identify the coefficient division rings and set
$P_{v_i}:=z_i-a_i$.
Then for $b\in D_{i-1}$, \eqref{eq:outer-decomposition} yields
\begin{align*}
   [P_{v_i},b]
   &=[z_i,b]-[a_i,b]=\partial_{\alpha_i,\beta_i}(b).
\end{align*}
In particular,
\[
   [P_{v_i},X^\lambda]
   =\alpha_i(\lambda)X^\lambda,
\qquad [P_{v_i},P_v]
   =\beta_i(v)
   \qquad(v\in V_{i-1}).
\]
Since these are exactly the defining relations of
$A_i:=A(\Lambda,\eta;V_i,\ell_i,\omega_i)$ 
with the extensions \eqref{eq:extend-ell}-\eqref{eq:extend-omega},
the PBW theorem for differential Ore extensions identifies
$D_{i-1}(z_i;\delta_i)\cong\operatorname{Frac} (A_i)$
as algebras. Thus, \eqref{eq:deletion-conclusion} follows by induction on $i$.
\end{proof}

\begin{corollary}
\label{cor:integral-deletion}
In the setting of Theorem~\ref{thm:deletion-criterion}, assume first that
at every stage, $\Phi_{i-1}$ identifies $A_{i-1}$ itself with
$A(\Lambda,\eta;V_{i-1},\ell_{i-1},\omega_{i-1})$ and $a_i$ in \eqref{eq:outer-decomposition} belongs to this
algebra (not only to its quotient division ring).  Then there is an algebra isomorphism
$A_n\cong A(\Lambda,\eta;V_n,\ell_n,\omega_n)$.

More generally, suppose that at every stage there is a localization
$B_{i-1}$ of $A_{i-1}$ at a finitely generated Ore set of
semi-invariants, such that
\begin{enumerate}
\item the localization is preserved by the remaining derivations,
\item $B_{i-1}$ is identified with a Jordanian quantum differential
algebra in $\mathscr A$,
\item $a_i\in B_{i-1}$, and
\item any further inversions are absorbed into a new
$\mathscr A$-presentation, for example by replacing an additive coordinate
which has been inverted by a lattice variable.
\end{enumerate}
Then the compatible localized tower has an algebra presentation in the
family $\mathcal A$, and hence the corresponding quotient division rings
are isomorphic.

In either case, if the original final algebra is $A_n=\Os(G)_J$ for
$G$ connected solvable and $J$ minimal, the resulting data are jointly
nondegenerate. Accordingly these hypotheses prove, respectively, the
strong and birational assertions of Conjecture \ref{conj:strong-normal-form} for that pair $(G,J)$.
\end{corollary}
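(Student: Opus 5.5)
The plan is to treat the two parts separately, using the induction already set up in Theorem~\ref{thm:deletion-criterion}.

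\emph{Integral case.} I will follow the proof of Theorem~\ref{thm:deletion-criterion}, but inside the rings rather than their quotient division rings. At stage $i$, suppose $\Phi_{i-1}$ identifies $A_{i-1}$ with $A(\Lambda,\eta;V_{i-1},\ell_{i-1},\omega_{i-1})$ and that $a_i$ lies in this algebra. Then $\Phi_{i-1}$ extends to an isomorphism $A_{i-1}[z_i;\delta_i]\cong A_{i-1}[z_i';\delta_i']$, where $\delta_i'=\Phi_{i-1}\delta_i\Phi_{i-1}^{-1}$ and $z_i\mapsto z_i'$. Set $P_{v_i}:=z_i'-a_i$. This is an element of the ring itself, and $A_{i-1}[z_i']=A_{i-1}[P_{v_i}]$, since the substitution is triangular with leading coefficient $1$. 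By \eqref{eq:outer-decomposition}, $[P_{v_i},-]=\partial_{\alpha_i,\beta_i}$ on $A_{i-1}$, so the new generator satisfies exactly the relations \eqref{eq:QDO-mixed}--\eqref{eq:QDO-additive} with the data \eqref{eq:extend-ell}--\eqref{eq:extend-omega}. Both rings have PBW bases over $A_{i-1}$ in one variable, so the surjection from $A(\Lambda,\eta;V_i,\ell_i,\omega_i)$ is injective. Induction then gives $A_n\cong A(\Lambda,\eta;V_n,\ell_n,\omega_n)$.

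\emph{Localized case.} I will run the same induction on the tower $B_0\subseteq B_1\subseteq\cdots$. Hypothesis~(1) lets $\delta_i$ extend to $B_{i-1}$, so $B_{i-1}[z_i;\delta_i]$ is a localization of $A_i$ at the same Ore set and has the same quotient division ring $D_i$. Hypotheses~(2) and~(3) let the integral argument above apply verbatim and produce an $\mathscr A$-presentation of $B_{i-1}[z_i;\delta_i]$. Hypothesis~(4) says that passing to $B_i$, which may invert further semi-invariants, stays in $\mathscr A$ after re-presentation. Ore localization does not change $\operatorname{Frac}$, so at the end $\operatorname{Frac}(A_n)\cong\operatorname{Frac}(B_n)\cong\operatorname{Frac}(A(\Lambda',\eta';V',\ell',\omega'))$. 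I will also track the count $\operatorname{rank}\Lambda'+\dim V'$: each step adds one variable, and the replacement in~(4) trades an additive coordinate for a lattice generator. So the total stays $\operatorname{rank}\Lambda+n=\dim G$.

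\emph{Joint nondegeneracy.} This is the step I expect to be the main obstacle, in the birational case. In the integral case it is immediate. $A_n=\Os(G)_J$ is simple by Theorem~\ref{thm:minimal-simple}, so $Z(A)=\C$, and Lemma~\ref{lem:center} then gives $K_V=0=K_\Lambda$.

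In the birational case, the final algebra $A':=A(\Lambda',\eta';V',\ell',\omega')$ need not be isomorphic to $\Os(G)_J$, only birational to it. I will compare centers of division rings. Lemma~\ref{lem:fraction-center} gives $Z(\operatorname{Frac}\Os(G)_J)=Z(\Os(G)_J)=\C$. The inclusion $Z(A')\subseteq Z(\operatorname{Frac}A')$ is immediate: a central element of an Ore domain commutes with inverses, hence is central in the fraction ring. Hence $Z(A')=\C$, and Lemma~\ref{lem:center} again yields joint nondegeneracy.

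Putting these together, the two hypotheses establish parts~(1) and~(2) of Conjecture~\ref{conj:strong-normal-form}, respectively, for that pair $(G,J)$.
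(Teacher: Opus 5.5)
Your proposal is correct and follows the paper's proof essentially step for step. The steps match: the in-ring substitution $P_{v_i}=z_i-a_i$ with a PBW comparison, the same argument run inside the localizations with (4) absorbing further inversions, the rank-plus-dimension count, and joint nondegeneracy via Lemma~\ref{lem:fraction-center}, the embedding $Z(A')\subseteq Z(\operatorname{Frac}A')$ and Lemma~\ref{lem:center}; the paper simply applies this division-ring argument uniformly to both cases.

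One small fix: simplicity of $\Os(G)_J$ only makes its center a field. To get $Z(\Os(G)_J)=\C$, cite Corollary~\ref{cor:center-solvable} as the paper does, or use the standard countable-dimension argument.
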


\begin{proof}
Under the first set of assumptions, the change
$P_{v_i}:=z_i-a_i$ takes place inside the algebra at every stage, so the
proof of Theorem~\ref{thm:deletion-criterion} gives an algebra
isomorphism rather than only an isomorphism of quotient division rings.

The second assertion follows by applying the same argument successively
inside the indicated compatible localizations.  Then \textup{(4)}
ensures that the final localized algebra, rather than merely a further
localization of it, again belongs to $\mathscr A$.

For the last assertion, $A_n$ is simple and has center $\C$ by
Theorem \ref{thm:minimal-simple} and Corollary \ref{cor:center-solvable}. Lemma~\ref{lem:fraction-center} gives
$Z({\rm Frac}(A_n))=\C$. Under the resulting division-ring isomorphism,
$Z(A(\Lambda,\eta;V_n,\ell_n,\omega_n))$ embeds in this scalar
field. It is therefore $\C$, and Lemma \ref{lem:center} gives
$K_V=0=K_\Lambda$. The total rank-plus-dimension requirement follows
from the tower construction; if an additive coordinate is replaced by a
lattice coordinate, the sum is unchanged.
\end{proof}

\begin{example}\label{ex:jordanian-multiplier}
Not every stage derivation is a scalar-valued canonical derivation
modulo inner derivations. Indeed, fix an integer $d\neq0$ and a
parameter $\xi\in\mathbb C^\times$. Let 
\[
G_d:=\mathbb G_m\ltimes\mathbb G_a,
\qquad
(t,c)(t',c'):=(tt',c+t^dc'),
\]
and let $J_\xi$ be induced by
\[
\mathcal F_\xi
:=
\exp\bigl(-(H/d)\otimes\log(1-\xi E)\bigr),
\qquad
H:=t\frac{\partial}{\partial t},
\qquad
E:=t^d\frac{\partial}{\partial c}.
\]
Then $[t,c]_{J_\xi}=\frac{\xi}{d}t^{d+1}$,
and hence
\[
\mathscr O(G_d)_{J_\xi}
=
\mathbb C[t^{\pm1}][c;\delta],
\qquad
\delta(t)=-\frac{\xi}{d}t^{d+1}.
\]
Since the preceding quotient field is $\mathbb C(t)$, its inner
derivations vanish.  Since a canonical toral derivation satisfies
$\partial_\alpha(t)=\alpha(1)t$, $\delta$ is not canonical modulo
inner derivations.  Rather, we have
\[
\delta=t^d\partial_\alpha,
\qquad
\alpha(1)=-\frac{\xi}{d},
\]
so for $P:=-d\xi^{-1}t^{-d}c$, we get $[P,t]=t$.  Thus, a central homogeneous multiplier is 
necessary. \qed
\end{example}

\subsection{Derivations modulo inner derivations}
\label{sec:normal-family-derivations}

For $A:=A(\Lambda,\eta;V,\ell,\omega)$, retain the notation
$R$, $R_0$, and $\Lambda_0$ of Lemma~\ref{lem:center}, and the
center-valued canonical derivations from
\S\ref{sec:prelim-quantum-differential}. The auxiliary results used below
are collected in \S\ref{sec:prelim-polynomial-cohomology}.

\begin{theorem}\label{thm:center-valued-outer}
Every derivation of $A:=A(\Lambda,\eta;V,\ell,\omega)$ has the form
$\delta=\operatorname{ad}(a)+\partial_{f,g}$,
where
\[
a\in A,
\qquad
f\in\operatorname{Hom}_{\mathbb Z}(\Lambda,Z(A)),
\qquad
g\in\operatorname{Hom}_{\mathbb C}(V,Z(A)).
\]
Equivalently,
\[
\operatorname{Der}_{\mathbb C}(A)
=
\operatorname{InnDer}(A)
+
Z(A)\cdot
\{\partial_{\alpha,\beta}\}.
\]
If the data are jointly nondegenerate, then $Z(A)=\mathbb C$ and
hence $\delta=\operatorname{ad}(a)+\partial_{\alpha,\beta}$.

Moreover, the same statements hold in $\operatorname{Frac}(A)$ for derivations
which preserve $A$.
\end{theorem}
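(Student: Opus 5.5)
The plan is to decompose $\delta$ by the $\Lambda$-grading. There are three steps: show that every component of degree outside $\Lambda_0$ is inner, reduce the components of degree in $\Lambda_0$ to degree zero by a central unit, and then treat degree zero with Lemmas~\ref{lem:weyl} and~\ref{lem:lattice}.

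\emph{Step 1: grading.} Write $A=\bigoplus_{\lambda}BX^\lambda$ with $B\cong\mathcal A_m(\C)\otimes\operatorname{Sym}(R)$, as in the proof of Lemma~\ref{lem:center}. Each homogeneous component $\delta_d$ (degree $d\in\Lambda$) of a derivation $\delta$ is again a derivation. Since $A$ is generated by $X^{\pm e_i}$ and finitely many $P_v$, only finitely many $\delta_d$ are nonzero, and $\delta=\sum_d\delta_d$. So it suffices to treat a single homogeneous $\delta_d$.

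\emph{Step 2: degrees $d\notin\Lambda_0$ give inner derivations.} I expect this step to be routine.

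First suppose $\ell(v)(d)\neq0$ for some $v$. On $BX^d$, the operator $\ad(P_v)$ equals a locally nilpotent operator plus the nonzero scalar $\ell(v)(d)$, so it is bijective on $BX^d$. Choose $a\in BX^d$ with $[P_v,a]=\delta_d(P_v)$ and replace $\delta_d$ by $\delta_d-\ad(a)$, so that now $\delta_d(P_v)=0$. Then $\delta_d$ commutes with $\ad(P_v)$. We check that it kills every generator:
\begin{itemize}
\item $\delta_d(X^\lambda)$ lies in $BX^{\lambda+d}$, where every generalized eigenvalue of $\ad P_v$ equals $\ell(v)(\lambda+d)$. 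It is also an eigenvector of $\ad P_v$ for the different eigenvalue $\ell(v)(\lambda)$, so $\delta_d(X^\lambda)=0$.
\item Similarly, $[P_v,\delta_d(P_w)]=\delta_d(\omega(v,w))=0$ forces $\delta_d(P_w)=0$.
\end{itemize}

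Next suppose instead $q_\eta(d,\mu)\neq1$ for some $\mu$. Use the invertible operator $b\mapsto b-X^\mu bX^{-\mu}$ on $BX^d$. Up to a scalar, conjugation by $X^\mu$ acts there as $q_\eta(d,\mu)\tau_\mu$, and $\tau_\mu$ is unipotent, so this operator is bijective. This lets us arrange $\delta_d(X^\mu)=0$. The same eigenvalue comparison, now for conjugation by $X^\mu$, kills all generators.

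\emph{Step 3: degrees $d\in\Lambda_0$.} Here $X^d$ is a central unit, so $\delta_d=X^d\delta''$ with $\delta''$ of degree $0$. Scalar multiples of central elements are what produce the $Z(A)$-valued coefficients. Lemma~\ref{lem:center} shows $Z(A)=\bigoplus_{\Lambda_0}\operatorname{Sym}(R_0)X^\lambda$, so it remains to show that a degree-$0$ derivation is $\ad(b)+\partial_{f,g}$ with $b\in B$ and $f,g$ taking values in $\operatorname{Sym}(R_0)$.

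\emph{Step 4: degree zero, the main obstacle.} A degree-$0$ derivation $\delta''$ preserves $B$.
\begin{enumerate}
\item By Lemma~\ref{lem:weyl}, after subtracting an inner derivation $\ad(b_1)$ with $b_1\in B$, the restriction to $B$ is a derivation $D$ of $\operatorname{Sym}(R)$ that kills the Weyl generators.
\item Write $\delta''(X^\lambda)=c_\lambda X^\lambda$ with $c_\lambda\in B$. The torus relations give exactly the translation cocycle identity \eqref{eq:L418-cocycle}.
\item Applying $\delta''$ to $[P_v,X^\lambda]=\ell(v)(\lambda)X^\lambda$ gives $[P_v,c_\lambda]X^\lambda$ plus a term involving $D(P_v)$ and the translation $\tau_\lambda$. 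From this I will extract two facts: $c_\lambda$ commutes with the Weyl generators, hence lies in $\operatorname{Sym}(R)$; and $D(P_v)-\tau_\lambda D(P_v)$ is determined by $c$. With the Weyl generators chosen inside a symplectic complement of $R$, this forces $D$ to be translation-equivariant.
\item Lemma~\ref{lem:lattice} then gives $c_\lambda=b-\tau_\lambda(b)+\sum_jf_j\alpha_j(\lambda)$ with $b\in\operatorname{Sym}(R)$ and $f_j\in\operatorname{Sym}(R_0)$. Since $\tau_\lambda(b)=X^\lambda bX^{-\lambda}$, subtracting $\ad(b)$ removes the coboundary part. What is left on the $X^\lambda$ is $\partial_{f,0}$ with $f(\lambda)=\sum_jf_j\alpha_j(\lambda)$.
\item The residual derivation of $\operatorname{Sym}(R)$ must now commute with all $\ad P_v$ and all translations. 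I will check that it is then $\partial_{0,g}$ with $g(V)\subseteq\operatorname{Sym}(R_0)$, modulo an inner part coming from $P_w$ with $w$ in a complement of $R$.
\end{enumerate}
The delicate point is this last bookkeeping: confirming that the values really land in $\operatorname{Sym}(R_0)$, and not merely in $\operatorname{Sym}(R)$.

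Finally, if the data are jointly nondegenerate, Lemma~\ref{lem:center} gives $Z(A)=\C$, so the formula specializes to $\delta=\ad(a)+\partial_{\alpha,\beta}$. For $\operatorname{Frac}(A)$: a derivation preserving $A$ restricts to a derivation of $A$, so the decomposition holds on $A$. Both sides then agree on $\operatorname{Frac}(A)$ by the uniqueness of extension to the quotient division ring (Lemma~\ref{lem:canonical-derivation}).
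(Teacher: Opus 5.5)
Your proposal is correct and is essentially the paper's proof: the $\Lambda$-graded decomposition, the two invertible-operator and eigenvalue-comparison arguments for degrees $d\notin\Lambda_0$, division by the central unit $X^d$, and then Lemma~\ref{lem:weyl} followed by Lemma~\ref{lem:lattice} in degree zero. The only difference is an immaterial reordering: you read off $c_\lambda\in\operatorname{Sym}(R)$ and the translation invariance of $D$ from the mixed relation $[P_v,X^\lambda]=\ell(v)(\lambda)X^\lambda$ before applying Lemma~\ref{lem:lattice}, whereas the paper first removes $\theta$ via the conjugation identity. The ``delicate point'' in Step~4(5) is already settled by your Step~4(3), since $D(P_r)=\tau_\lambda(D(P_r))$ for all $\lambda$ and $r\in R$ puts $D(P_r)$ in $\operatorname{Sym}(R)^{\Lambda}=\operatorname{Sym}(R_0)$; the residual is therefore exactly $\partial_{0,g}$ with $g|_S=0$, and no extra inner term is needed. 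Two small fixes: in Step~2 the sign is wrong, so take $[a,P_v]=\delta_d(P_v)$; and use $(X^\mu)^{-1}$ rather than $X^{-\mu}$ in the conjugation operator.
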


\begin{proof}
Let
\[
B_0
:=
T(V)\Big/
\bigl\langle
P_vP_w-P_wP_v-\omega(v,w)1
\mid\ v,w\in V
\bigr\rangle
\subseteq A.
\]
For every $\mu\in\Lambda$, let
$\tau_\mu\in\operatorname{Aut}_{\mathbb C}(B_0)$ be the translation
automorphism defined by
\[
\tau_\mu(P_v)
:=
P_v-\ell(v)(\mu)1
\qquad(v\in V).
\]
By the defining relations of $A$, we have
\[
X^\mu b=\tau_\mu(b)X^\mu,
\qquad
\tau_\mu(b)=X^\mu b(X^\mu)^{-1}
\qquad(b\in B_0).
\]
Moreover, $\tau_\mu-\operatorname{id}$ strictly lowers the
filtration by total degree in the generators $P_v$, and hence
is locally nilpotent on $B_0$. Thus, for every
$q\in\mathbb C\setminus\{1\}$, the operator
$q\operatorname{id}-\tau_\mu$ is invertible, with inverse
\[
(q\operatorname{id}-\tau_\mu)^{-1}
=
\sum_{k\geq0}
\frac{(\tau_\mu-\operatorname{id})^k}{(q-1)^{k+1}},
\]
the sum being finite on each element of $B_0$.

Now give $A$ its $\Lambda$-grading
$\deg X^\lambda:=\lambda$ and $\deg P_v:=0$.
Then every derivation is a finite sum of homogeneous derivations.

Let $\delta$ be homogeneous of degree $d$.  If there is
$\mu\in\Lambda$ with $q_\eta(d,\mu)\neq1$, then
$q_\eta(d,\mu)\operatorname{id}-\tau_\mu$ is invertible on the
degenerate Weyl algebra generated by the $P_v$ since
$\tau_\mu-\operatorname{id}$ is locally nilpotent.  Thus, subtracting a
suitable $\operatorname{ad}(aX^d)$ kills $\delta(X^\mu)$.  The
corrected derivation then commutes with $\operatorname{Ad}(X^\mu)$ but it
maps generalized eigenspaces with eigenvalue
$q_\eta(\mu,\lambda)$ to generalized eigenspaces with the distinct
eigenvalue $q_\eta(\mu,\lambda+d)$.  Hence, it is zero.

If $q_\eta(d,\Lambda)=1$ but $\ell(v_0)(d)\neq0$ for some $v_0$, then the
operator
$a\longmapsto[a,P_{v_0}]-\ell(v_0)(d)a$
is invertible since $\operatorname{ad}(P_{v_0})$ is locally
nilpotent.  Thus, after an inner correction, we have
$\delta(P_{v_0})=0$.  The derivation then commutes with
$\operatorname{ad}(P_{v_0})$, while it shifts its generalized
eigenvalue from $\ell(v_0)(\lambda)$ to
$\ell(v_0)(\lambda+d)$.  Thus again, it must vanish.

Thus, every homogeneous derivation of degree
$d\notin\Lambda_0$ is inner.  If $d\in\Lambda_0$, then $X^d$ is a
central unit, and $(X^d)^{-1}\delta$ has degree zero. So,
it remains to treat a degree-zero derivation.  Choose a decomposition $V=S\oplus R$ with $\omega|_S$ nondegenerate.  Then
\[
B_0\cong \mathcal A_m(\mathbb C)\otimes\operatorname{Sym}(R).
\]
By Lemma \ref{lem:weyl}, after subtracting an inner derivation induced by an
element of $B_0$, we may assume that $\delta|_{B_0}$ is the extension,
by zero on the Weyl generators corresponding to $S$, of a derivation
$\theta$ of $\operatorname{Sym}(R)$. Write
$\delta(X^\lambda)=c_\lambda X^\lambda$, $c_\lambda\in B_0$.
Applying $\delta$ to
$X^\lambda b(X^\lambda)^{-1}=\tau_\lambda(b)$ gives
\[
 \theta\bigl(\tau_\lambda(b)\bigr)
 -\tau_\lambda\bigl(\theta(b)\bigr)
 =[c_\lambda,\tau_\lambda(b)]
 \qquad(b\in B_0),
\]
where $\theta$ also denotes its extension to $B_0$. For
$b\in Z(B_0)=\operatorname{Sym}(R)$, the right-hand side is zero. Thus $\theta$
commutes with all the translations $\tau_\lambda$ on $\operatorname{Sym}(R)$.
Its coefficients are therefore invariant polynomials, and hence belong
to $\operatorname{Sym}(R_0)$. We may write
\[
 \theta=\sum_j f_j\partial_{\beta_j},
 \qquad f_j\in\operatorname{Sym}(R_0),\quad\beta_j\in R^*.
\]
Extend each $\beta_j$ to $V^*$ by declaring it zero on $S$.
Since $\operatorname{Sym}(R_0)\subseteq Z(A)$, subtracting
$\sum_j f_j\partial_{0,\beta_j}$ leaves a derivation whose
restriction to $B_0$ is zero.

Now $\delta|_{B_0}=0$, the same conjugation identity implies
$c_\lambda\in Z(B_0)=\operatorname{Sym}(R)$, and the relation
$X^\lambda X^\mu=\eta(\lambda,\mu)X^{\lambda+\mu}$ gives
$c_{\lambda+\mu}=c_\lambda+\tau_\lambda(c_\mu)$.
Lemma \ref{lem:lattice} gives
\[
 c_\lambda=b-\tau_\lambda(b)+\sum_jf_j\alpha_j(\lambda),
\]
with $b\in\operatorname{Sym}(R)$, $f_j\in\operatorname{Sym}(R_0)$, and
$\alpha_j\in\Hom_{\mathbb Z}(\Lambda,\C)$.
Thus subtracting $\operatorname{ad}(b)$ and
$\sum_jf_j\partial_{\alpha_j,0}$ kills the derivation.
This proves the degree-zero statement.  

Finally, multiply the degree-zero decomposition by the central
unit $X^d$ and sum over the finitely many homogeneous components.
This proves the assertion for $A$. If a derivation of
$\operatorname{Frac}(A)$ preserves $A$, apply this decomposition
to its restriction. The identity
$\delta(b^{-1})=-b^{-1}\delta(b)b^{-1}$ $(0\ne b\in A)$
then extends the equality uniquely to $\operatorname{Frac}(A)$.
\end{proof}

\begin{corollary}\label{cor:correct-hopf-stage}
At a Hopf $2$-cocycle Ore stage $A_i=A_{i-1}[z_i;\delta_i]$,
assume that $A_{i-1}$ has already been identified with an
algebra $A_{i-1}=A(\Lambda_{i-1},\eta_{i-1};V_{i-1},\ell_{i-1},\omega_{i-1})$ in $\mathscr A$.  Then
\[
[\delta_i]
\in
Z(A_{i-1})\cdot
\{[\partial_{\alpha,\beta}]\}
\]
modulo inner derivations.  If $A_{i-1}$ is jointly nondegenerate, then $[\delta_i]=[\partial_{\alpha_i,\beta_i}]$.

If more generally,
$\delta_i
=
\operatorname{ad}(a_i)
+
u_i\partial_{\alpha_i,\beta_i}
$
with $u_i$ a central unit in the preceding quotient division ring,
then $P_i:=u_i^{-1}(z_i-a_i)$
satisfies
$[P_i,x]=\partial_{\alpha_i,\beta_i}(x)$. \qed
\end{corollary}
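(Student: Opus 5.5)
The first assertion follows from Theorem~\ref{thm:center-valued-outer}, applied to the algebra $A_{i-1}=A(\Lambda_{i-1},\eta_{i-1};V_{i-1},\ell_{i-1},\omega_{i-1})$. The map $\delta_i$ is the restriction of $\operatorname{ad}(z_i)$ from $A_i$ to $A_{i-1}$. By the proof of Theorem~\ref{thm:solvable-ore-structure} it preserves $A_{i-1}$, so it is an ordinary $\C$-derivation of $A_{i-1}$. Theorem~\ref{thm:center-valued-outer} then gives $\delta_i=\operatorname{ad}(a)+\partial_{f,g}$ with $a\in A_{i-1}$ and center-valued $f,g$.

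To place $\partial_{f,g}$ in $Z(A_{i-1})\cdot\{\partial_{\alpha,\beta}\}$, expand $f$ and $g$ in bases. Write $f=\sum_j z_j\alpha_j$, with $z_j$ running over a basis of the finite-dimensional span of $f(e_1),\dots,f(e_r)$ and $\alpha_j\in\Hom_{\mathbb Z}(\Lambda,\C)$. Do the same for $g$ with coefficients $\beta_k\in V^*$. Then $\partial_{f,g}=\sum_j z_j\partial_{\alpha_j,0}+\sum_k z'_k\partial_{0,\beta_k}$. So modulo inner derivations, $[\delta_i]$ lies in the stated $Z(A_{i-1})$-span.

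In the jointly nondegenerate case, Lemma~\ref{lem:center} gives $Z(A_{i-1})=\C$. Then $f$ and $g$ are scalar-valued, and the last clause of Theorem~\ref{thm:center-valued-outer} yields $\delta_i=\operatorname{ad}(a_i)+\partial_{\alpha_i,\beta_i}$, that is, $[\delta_i]=[\partial_{\alpha_i,\beta_i}]$.

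For the second assertion, work in the differential Ore extension of the quotient division ring $D_{i-1}=\operatorname{Frac}(A_{i-1})$. The derivation $\delta_i$ extends uniquely to $D_{i-1}$ by Lemma~\ref{lem:canonical-derivation}, or by the rule $\delta(b^{-1})=-b^{-1}\delta(b)b^{-1}$. For $x\in D_{i-1}$ we have $[z_i-a_i,x]=\delta_i(x)-[a_i,x]=u_i\partial_{\alpha_i,\beta_i}(x)$. Since $u_i$ is central in $D_{i-1}$, it commutes with $x$, and so
\[
[u_i^{-1}(z_i-a_i),x]=u_i^{-1}[z_i-a_i,x]=\partial_{\alpha_i,\beta_i}(x).
\]

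The only point needing care is the ambient ring. If $a_i$ or $u_i$ lies only in the quotient division ring, then $P_i$ lives in $D_{i-1}(z_i;\delta_i)$, and the identity above should be read there. I expect this to be the only subtlety: the statement is essentially a repackaging of Theorem~\ref{thm:center-valued-outer} together with a one-line commutator computation.
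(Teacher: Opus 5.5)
Your proposal is correct and follows the paper's intended argument. The corollary is recorded without a separate proof because it is Theorem~\ref{thm:center-valued-outer}, with Lemma~\ref{lem:center} giving $Z(A_{i-1})=\C$ in the jointly nondegenerate case, together with the one-line computation $[u_i^{-1}(z_i-a_i),x]=u_i^{-1}[z_i-a_i,x]$ in $\operatorname{Frac}(A_{i-1})[z_i;\delta_i]$, which you carry out correctly. (As a purely cosmetic point, avoid writing $z_j$ for the central basis elements, since $z_i$ already denotes the Ore generators.)
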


Theorem~\ref{thm:deletion-criterion}, Example~\ref{ex:jordanian-multiplier}, Theorem~\ref{thm:center-valued-outer}, and
Corollary~\ref{cor:correct-hopf-stage} motivate the following.

\begin{conjecture}
\label{conj:stagewise}
Let $G$ be connected solvable and let $J$ be
minimal. 
At a stage $B_i=\Os(G_i)_{J_i}$, choose a support representative of
$J_i$ and its right support coaction as in \S\ref{sec:prelim-support-charts}. Semi-invariants at
that stage refer to this coaction. Under a chosen algebra isomorphism,
this condition is transported along that isomorphism; any localization
used subsequently is required to be compatible with the coaction.
Normality and regularity are required in the coefficient algebra at
the stage where an element is inverted.

There exists a normal additive series and a compatible choice of
coordinates such that 
there are finitely many semi-invariants,
normal in the relevant successive coefficient algebras, whose powers
form compatible denominator sets, such that after inverting them
stage by stage and making triangular changes of the Ore generators,
the resulting localized tower is a jointly nondegenerate algebra
$A(\Lambda,\eta;V,\ell,\omega)$. Here triangular changes are allowed to have an invertible
coefficient in the preceding localized algebra; in particular,
they include $z_i\mapsto u_i^{-1}(z_i-a_i)$ when $u_i$ is a
central unit there. \qed
\end{conjecture}

\begin{question}
\label{q:adapted-rank-one}
Can one choose the normal additive series and make compatible
triangular changes of the preceding variables, so that at every
degenerate stage,
$[\delta_i]=u_i[\partial_{\alpha_i,\beta_i}]$
modulo inner derivations, where \(u_i\) is
a central rational semi-invariant unit? \qed
\end{question}

\begin{remark}
A positive answer to Question \ref{q:adapted-rank-one} gives, after identifying the
preceding quotient division rings,
$\delta_i=\operatorname{ad}(a_i)
+u_i\partial_{\alpha_i,\beta_i}$.
Suppose that the compatible coefficient localizations required
in Corollary \ref{cor:integral-deletion} can also be chosen to contain
$a_i$, $u_i$, and $u_i^{-1}$, with $u_i$ central, and that the
remaining compatibility and presentation hypotheses there hold.
Then the triangular change
$P_i:=u_i^{-1}(z_i-a_i)$
satisfies
$[P_i,b]=\partial_{\alpha_i,\beta_i}(b)$
for every element $b$ of the preceding coefficient algebra,
by Corollary~\ref{cor:correct-hopf-stage}. Iterating the construction of Theorem~\ref{thm:deletion-criterion}
inside these localizations gives Conjecture~\ref{conj:stagewise}. Thus the
finite-localization conclusion requires denominator control
for the multipliers as well as for the inner corrections. \qed
\end{remark}

\subsection{The case \texorpdfstring{$\dim R_u(G)=1$}{dim Ru(G)=1}}

The case of a one-dimensional unipotent radical can be settled completely.

The homogeneous-derivation calculation needed for the proof is
Lemma~\ref{lem:homogeneous-derivations}.

\begin{theorem}
\label{thm:rank-one}
Let $G$ be a connected solvable affine algebraic group with
$\dim R_u(G)=1$
and let $J$ be a minimal Hopf $2$-cocycle for $\Os(G)$.  Then Conjecture~\ref{conj:strong-normal-form} holds; namely, there are jointly nondegenerate data such
that
\[
   \Os(G)_J
   \cong
   A(\Lambda,\eta;\C v,\ell,0)
\]
as algebras. More explicitly, writing
$G=T\ltimes\Ga$
and letting $d\in\Lambda$ be the weight of the action of $T$ on
$\Ga$, we have
$d\in{\rm rad}(q_\eta)$
and there is
$\alpha\in\Hom_{\mathbb Z}(\Lambda,\C)$, such that
\begin{equation}\label{eq:rank-one-normal-form}
   \Os(G)_J
   \cong
   \C^\eta[\Lambda][P;\partial_\alpha],
   \qquad
   [P,X^\lambda]=\alpha(\lambda)X^\lambda.
\end{equation}
Also, the joint nondegeneracy conditions are
\begin{equation}\label{eq:rank-one-nondeg}
   \alpha\neq0,
   \qquad
   {\rm rad}(q_\eta)\cap\ker\alpha=0.
\end{equation}
\end{theorem}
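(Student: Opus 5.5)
We start from Theorem~\ref{thm:solvable-ore-structure} with $n=1$. Since $G$ is connected solvable with $\dim R_u(G)=1$, we have $G=T\ltimes\Ga$, with $T$ acting on $\Ga$ through a character $d\in\Lambda$. Because $U_1=1$, the tower has length one. Hence
\[
\Os(G)_J\cong\C^\eta[\Lambda][z;\delta],
\]
where $\delta=\operatorname{ad}(z)|_{\C^\eta[\Lambda]}$. The first step is to show that $\delta$ is homogeneous of degree $d$ for the $\Lambda$-grading. Lemma~\ref{lem:triangular-coordinate} gives $\Delta(z)=z\otimes1+\chi\otimes z+w$ with $\chi=X^d$ and $w\in\Os(T)^{\otimes2}$. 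Moreover $w$ can be removed: $H^2$ of the torus with coefficients in $\Ga$ vanishes, so after a change of section we may take $w=0$ and $z$ a $T$-weight vector.

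Next we use the formula for $\delta(X^\lambda)=z\cdot_JX^\lambda-X^\lambda\cdot_Jz$ from the proof of Theorem~\ref{thm:solvable-ore-structure}. With $w=0$ it reads
\[
\delta(X^\lambda)=X^{d+\lambda}J(z,X^\lambda)-X^{\lambda+d}J(X^\lambda,z)\in\C X^{\lambda+d}.
\]
So $\delta$ is homogeneous of degree $d$. If some residual issue with $J$ not being $T$-equivariant made this fail, I would first gauge $J$ by an inflated toral cochain; these cochains are diagonal on weight spaces, so homogeneity would be preserved.

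Now apply Lemma~\ref{lem:homogeneous-derivations}. If $d\notin{\rm rad}(q_\eta)$, then $\delta=\operatorname{ad}(aX^d)$. In that case $z-aX^d$ is central and non-scalar, which contradicts $Z(\Os(G)_J)=\C$ from Corollary~\ref{cor:center-solvable}, or alternatively simplicity via Theorem~\ref{thm:minimal-simple}. Hence $d\in{\rm rad}(q_\eta)$, and $\delta=u_d\partial_\alpha$ with $u_d$ a central homogeneous unit of degree $d$. Set $P:=u_d^{-1}z$. Since $u_d$ is central in $\C^\eta[\Lambda]$ and $\delta(u_d)=u_d\alpha(d)u_d$, one computes
\[
[P,X^\lambda]=u_d^{-1}[z,X^\lambda]=u_d^{-1}\delta(X^\lambda)=\alpha(\lambda)X^\lambda.
\]
(For the first equality, write $[u_d^{-1}z,X^\lambda]=u_d^{-1}[z,X^\lambda]$, using that $u_d^{-1}$ commutes with $X^\lambda$.)

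The substitution $z\mapsto u_d^{-1}z$ is triangular with a unit coefficient, so the PBW basis $\{X^\lambda z^a\}$ of Corollary~\ref{cor:solvable-simple-properties} corresponds to $\{X^\lambda P^a\}$. This yields \eqref{eq:rank-one-normal-form}, which is $A(\Lambda,\eta;\C v,\ell,0)$ with $\ell(v)=\alpha$. Joint nondegeneracy follows from Lemma~\ref{lem:center} together with $Z=\C$. Here $K_V=0$ means $\alpha\ne0$, since $\omega=0$. The condition $K_\Lambda=0$ means ${\rm rad}(q_\eta)\cap\ker\alpha=0$. This gives \eqref{eq:rank-one-nondeg}.

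The main obstacle is the homogeneity step: ensuring that $\delta$ really has pure degree $d$, i.e.\ controlling $w$ and the equivariance of $J$. The rest is a direct application of Lemma~\ref{lem:homogeneous-derivations} and the center computation.
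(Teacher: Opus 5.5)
Your proposal is correct and follows essentially the same route as the paper. Both arguments use the one-step Ore presentation with $\Delta(u)=u\otimes1+X^d\otimes u$, the homogeneity of $\delta$ in degree $d$, and Lemma~\ref{lem:homogeneous-derivations} to exclude $d\notin{\rm rad}(q_\eta)$ and write $\delta=u_d\partial_\alpha$, followed by the substitution $P:=u_d^{-1}u$. The differences are cosmetic. You get the contradiction and joint nondegeneracy from $Z(\Os(G)_J)=\C$ (Corollary~\ref{cor:center-solvable}) and Lemma~\ref{lem:center}, where the paper uses simplicity and Theorem~\ref{thm:abelian-realization}, and you spell out the triangular PBW comparison that the paper leaves implicit.
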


\begin{proof}
Choose a coordinate $u$ on the normal subgroup $\Ga$, so that
$\Os(G)=\C[\Lambda][u]$
and
\begin{equation}\label{eq:rank-one-coproduct}
   \Delta(u)=u\otimes1+X^d\otimes u.
\end{equation}
The restriction of $J$ to $\Os(T)$ gives a $2$-cocycle $\eta$ and hence the
subalgebra $\C^\eta[\Lambda]\subseteq\Os(G)_J$.
Then the Ore presentation has the form
$\Os(G)_J=\C^\eta[\Lambda][u;\delta]$.
Since for $\lambda\in\Lambda$, 
\eqref{eq:rank-one-coproduct} and the definition of the twisted product
give
\begin{align*}
   \delta(X^\lambda)
   &=[u,X^\lambda]_J=
   \bigl(J(u,X^\lambda)-J(X^\lambda,u)\bigr)
     X^{\lambda+d}\qquad (\lambda\in \Lambda).
\end{align*}
Thus, $\delta$ is homogeneous of degree $d$.

If $d\notin{\rm rad}(q_\eta)$, Lemma~\ref{lem:homogeneous-derivations} gives
$\delta=\ad(a_d)$ for a homogeneous element $a_d\in\C^\eta[\Lambda]_d$.  Replacing $u$ by
$u':=u-a_d$ then gives
$[u',\C^\eta[\Lambda]]=0$.
Hence, $\Os(G)_J\cong\C^\eta[\Lambda][u']$
has the nonzero proper ideal $(u')$, contradicting simplicity and
Theorem~\ref{thm:minimal-simple}.  Therefore,
$d\in{\rm rad}(q_\eta)$.

By Lemma~\ref{lem:homogeneous-derivations}, there are a central
homogeneous unit $u_d\in\C^\eta[\Lambda]_d$ and
$\alpha\in\Hom_{\mathbb Z}(\Lambda,\C)$, such that
$\delta=u_d\partial_\alpha$. 
Set $P:=u_d^{-1}u$.
Then $[P,X^\lambda]
   =u_d^{-1}\delta(X^\lambda)
   =\alpha(\lambda)X^\lambda$, which proves \eqref{eq:rank-one-normal-form}.

Finally, by Theorem 
\ref{thm:abelian-realization}, 
$A(\Lambda,\eta;\C v,\ell,0)$ is simple precisely when
$\ell(v)\neq0$ and ${\rm rad}(q_\eta)\cap\ker\ell(v)=0$.
Since $J$ is minimal, $\Os(G)_J$ is simple and
\eqref{eq:rank-one-nondeg} follows with $\ell(v)=\alpha$.
\end{proof}

\begin{corollary}\label{cor:rank-one-conjectures}
Conjectures~\ref{conj:strong-normal-form} and \ref{conj:stagewise} hold whenever
\(\dim R_u(G)\leq1\).
\end{corollary}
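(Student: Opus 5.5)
The case split is $\dim R_u(G)=0$ versus $\dim R_u(G)=1$, and in each case I will show that both conjectures hold with $V=0$, respectively $V=\C v$.

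\emph{Torus case, $\dim R_u(G)=0$.} Here $G=T$, and by Example~\ref{ex:basic-solvable-blocks}(1) we have $\Os(T)_J\cong\C^\eta[\Lambda]$ with $q_\eta$ nondegenerate, since $J$ is minimal. This algebra is $A(\Lambda,\eta;0,0,0)$ with $\operatorname{rank}\Lambda=\dim G$. Joint nondegeneracy is automatic:
\begin{itemize}
\item $K_V=0$ because $V=0$;
\item $K_\Lambda=\operatorname{rad}(q_\eta)=0$.
\end{itemize}
Hence Conjecture~\ref{conj:strong-normal-form}(1) holds, and (2) follows by passing to quotient division rings. For Conjecture~\ref{conj:stagewise} the Ore tower \eqref{eq:solvable-ore} has no Ore stages at all. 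Taking the empty normal series, the empty set of denominators and no triangular changes already yields a jointly nondegenerate algebra in $\mathscr A$.

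\emph{Rank-one case, $\dim R_u(G)=1$.} Theorem~\ref{thm:rank-one} gives $\Os(G)_J\cong A(\Lambda,\eta;\C v,\ell,0)$ with jointly nondegenerate data. The dimension count is $\operatorname{rank}\Lambda+1=\dim T+\dim U=\dim G$. This proves Conjecture~\ref{conj:strong-normal-form}(1), and (2) again follows by taking $\operatorname{Frac}$ of both sides.

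For Conjecture~\ref{conj:stagewise} I will reread the proof of Theorem~\ref{thm:rank-one} as a single-stage tower $\C^\eta[\Lambda][u;\delta]$ and check the required form.
\begin{itemize}
\item The normal series $U\supset1$ is forced.
\item No elements are inverted, so the family of denominators is empty and the compatibility and normality requirements are vacuous.
\item The triangular change is $u\mapsto P=u_d^{-1}u$. Here $u_d$ is the central homogeneous unit of the coefficient algebra $\C^\eta[\Lambda]$ supplied by Lemma~\ref{lem:homogeneous-derivations}, so this change is of the allowed form $z_i\mapsto u_i^{-1}(z_i-a_i)$ with $a_i=0$.
\end{itemize}
The resulting presentation is exactly \eqref{eq:rank-one-normal-form}, which is jointly nondegenerate by \eqref{eq:rank-one-nondeg}.

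The only point needing care is the semi-invariance requirement in Conjecture~\ref{conj:stagewise}. Since no denominators are used, it is satisfied vacuously, and $u_d$ is a monomial $X^d$ up to scalar. So I expect no real obstacle: the corollary is a bookkeeping consequence of Example~\ref{ex:basic-solvable-blocks}(1) and Theorem~\ref{thm:rank-one}.
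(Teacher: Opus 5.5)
Your proposal is correct and follows the paper's proof: the torus case reduces to the simple quantum torus $\C^\eta[\Lambda]=A(\Lambda,\eta;0,0,0)$, and the case $\dim R_u(G)=1$ is Theorem~\ref{thm:rank-one}. Your bookkeeping for Conjecture~\ref{conj:stagewise} (a single Ore stage, an empty denominator set, and the allowed triangular change $u\mapsto u_d^{-1}u$ with $a_i=0$) spells out what the paper's two-line proof leaves implicit.
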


\begin{proof}
For $\dim R_u(G)=0$, the group is a torus and the twisted algebra is a
quantum torus.  The case $\dim R_u(G)=1$ is
Theorem~\ref{thm:rank-one}.
\end{proof}

\end{document}